\theoremstyle{plain}
\newtheorem{thm}{Theorem}
\newtheorem{cor}[thm]{Corollary}
\newtheorem{lem}[thm]{Lemma}
\newtheorem{lem*}[thm]{Lemma}
\newtheorem{prop}[thm]{Proposition}
\theoremstyle{definition}
\newtheorem{dfn}{Definition}
\theoremstyle{remark}
\newtheorem{rem}{Remark}
\newtheorem{rem*}{Remark}
\newtheorem{ex}[rem]{Example}
\numberwithin{rem}{section} 
\numberwithin{dfn}{section} 
\numberwithin{equation}{section} 
\numberwithin{thm}{section} 
\def\!{\operatorname{!}}
\def\F{\mathbb F}
\def\G{\mathbb G}
\def\1{\bold 1}
\def\deg{\operatorname{deg}}
\def\Hom{\operatorname{Hom}}
\def\Ext{\operatorname{Ext}}
\def\rk{\operatorname{rk}}
\def\deg{\operatorname{deg}}
\def\coker{\operatorname{coker}}
\theoremstyle{definition}
\numberwithin{equation}{section}
\newcommand{\Der}{\mathrm{Der} }
\newcommand{\Derin}{\Der_{in}}
\newcommand{\up}[2]{#1^{(#2)}}
\newcommand{\lra}{\longrightarrow}
\newcommand{\podwzorem}[2]{\underbrace{#1}\limits_{#2}}
\newcommand{\nadwzorem}[2]{\overbrace{#1}\limits^{#2}}
\newcommand{\uplra}[1]{\stackrel{#1}{\lra}}
\newcommand\numberthis{\addtocounter{equation}{1}\tag{\theequation}}
\begin{document}

\baselineskip=17pt


\title[On $\mathbf{Ext^1}$ ...]{ On $\mathbf{Ext^1}$ for Drinfeld modules}

\author{Dawid Edmund K{\k e}dzierski, Piotr Kraso{\'n}}

\date{\today}

\address{  Institute of Mathematics, Department of Exact and Natural Sciences, University of Szczecin, ul. Wielkopolska 15, 70-451 Szczecin, Poland 
}
\email{dawid.kedzierski@usz.edu.pl}

\address{Institute of Mathematics, Department of Exact and Natural Sciences, University of Szczecin, ul. Wielkopolska 15, 70-451 Szczecin, Poland }
\email{ piotrkras26@gmail.com}

\subjclass[2020]{11G09, 18G50}

\keywords{t-modules, Drinfeld modules, group of extensions, Hom-Ext exact sequence, biderivations, dual t-motives}
\thanks{}

\maketitle

\newcommand{\tm}{$\mathbf{t}-$}
\newcommand{\tsm}{$\mathbf{t}^{\sigma}-$}

\begin{abstract}
Let $A={\mathbb F}_q[t]$ be the polynomial ring over a finite field ${\mathbb F}_q$ and let $\phi  $ and $\psi$  be $A-$Drinfeld modules. 
In this paper we consider the  group  ${\mathrm{Ext}}^1(\phi ,\psi )$ with the Baer addition. We show that if $\mathrm{rank}\phi >\mathrm{rank}\psi$ then $\mathrm{Ext^1}(\phi,\psi)$ has the structure of a \tm module. We give complete algorithm describing this structure. We generalize this to the cases: $\mathrm{Ext^1}(\Phi,\psi)$ where $\Phi$ is a \tm module and $\psi$ is a Drinfeld module and $\mathrm{Ext^1}(\Phi, C^{\otimes e})$ where $\Phi$ is a \tm module and $C^{\otimes e}$ is the $e$-th tensor product of Carlitz module. We also establish duality between $\Ext$ groups for \tm modules
and the corresponding adjoint ${\mathbf t}^{\sigma}$-modules. Finally, we  prove the existence of $"\Hom-\Ext"$ six-term exact sequences for \tm modules and dual \tm motives. As the category of \tm modules is only additive (not abelian) this result is nontrivial.
\end{abstract}

\section{Introduction}\label{sec1}
Drinfeld modules since their discovery \cite{d74} gained a lot of attention in arithmetic algebraic geometry because of 
their numerous applications e.g. in class field theory, in Langlands conjectures, theory of automorphic forms or Diophantine geometry (see \cite{g96},  \cite{th04} or \cite{f13}). There is a deep analogy between the theory 
of Drinfeld modules over function fields and  theory of elliptic curves over number fields or more generally between \tm modules and abelian varieties over number fields (see \cite{bp20} ). This analogy justifies various attempts to use the theory of Drinfeld modules in cryptology (see \cite{g03}, \cite{n19}).

There are also some significant differences.
For example the Mordell-Weil groups of \tm 
modules are not finitely generated \cite{p95}.
 The category of Drinfeld modules as well as the category of \tm modules are not semisimple i.e. there are non-trivial extensions.  
\begin{rem}\label{rema1}
Since in this paper we consider $\Ext$ groups in several different categories, we indicate the appropriate category by a subscript $\tau, {\mathbb F}_q[t]$ or $\sigma.$
\end{rem}
Let $\mathrm{Ext}^1_{\tau}(B, A)$ be the Bauer group of extensions of \tm modules i.e. the group of exact sequences
\begin{equation}\label{seq}
0\rightarrow A\rightarrow M\rightarrow B\rightarrow 0
\end{equation}
with the usual addition known from homological algebra (cf. \cite{Mac}).

Let $\phi$ and $\psi$ be Drinfeld modules.
In this paper we study the group $\mathrm{Ext}^1_{\tau}(\phi, \psi).$
It turns out that sometimes we can endow the extension group $\mathrm{Ext}^1_{\tau}(A,B)$, for certain specific \tm modules, with a ${\mathbf t}$-module structure.  In our study we apply the method (used in \cite{pr}) of expressing the elements of $\mathrm{Ext}^1_{\tau}(\phi,\psi)$ by certain classes of  biderivations. This idea was originally  introduced by G. Hochschild in his thesis for the study of extensions ( the first  Hochschild cohomology groups ) of associative algebras.
It is  worth mentioning that the concept of describing an extension space as a cokernel of a line map was used by C.M. Ringel for modules over path algebra of a quiver or more generally for K-species \cite{r76}, 
 \cite{r98}.
The content of the paper is as follows.
In Section \ref{prelim} we recall basic definitions and properties of Drinfeld modules and \tm modules.
In Section \ref{extension} we 
identify  the  $\mathrm{Ext}^1_{\tau}(\phi,\psi)$ for Drinfeld modules satisfying ${\mathrm{rk}}\phi > {\mathrm{rk}}\psi$ with
some ${\mathbb F}_q$-subspace of the ring of skew polynomials $K\{\tau\}.$
In Section \ref{examples} we compute an example which show how to endow $\mathrm{Ext}^1_{\tau}(\phi,\psi)$ 
for the case  ${\mathrm{rk}}\phi > {\mathrm{rk}}\psi$ with the structure of a $\mathbf t$-module. 
Section \ref{extension1} is devoted to the proof of general case i.e. that $\mathrm{Ext}^1_{\tau}(\phi,\psi)$ is in the considered case a $\mathbf t$-module. In the proof of Proposition we describe a recursive step which is sufficient for finding the exact formulas or designing a computer program.
In Section \ref{gen1} we generalize the results from Section \ref{extension1} to certain \tm modules that are products of Drinfeld modules. 
A brief discussion of the remaining cases i.e. ${\mathrm{rk}}\phi \leq {\mathrm{rk}}\psi$ is included in Section \ref{general} where also, for a perfect field $K,$ a duality between \tm modules and \tsm modules is established. The Duality Theorem (Theorem \ref{Duality}) allows one to describe $\mathrm{Ext}^1_{\tau}(\phi,\psi)$ for ${\mathrm{rk}}\phi <  {\mathrm{rk}}\psi$ as an ${\mathbb F}_q(t)$-module. In   Section \ref{Phipsi} we discuss $\mathrm{Ext}^1_{\tau}(\Phi,\psi)$ 
where $\Phi$ is a \tm module, $\psi$ is a Drinfeld module and ${\mathrm{rk}}\Phi > {\mathrm{rk}}\psi.$ 
In Section \ref{Carlitz} we describe $\mathrm{Ext}^1_{\tau}(\Phi,C^{\otimes e})$ for a \tm module $\Phi$ such that 
$\rk\Phi\geq 2$ and the $e$-th tensor power of the Carlitz module $C.$ 
In  Section \ref{push} we prove directly the existence of pullbacks and pushouts in the category of \tm modules. We also prove  existence of the six-term $"\Hom-\Ext"$ exact sequences, where the last map is surjective. As the category of \tm modules is not an abelian category these are nontrivial results ( cf. Example \ref{exm} ).  We conclude this section with some consequences of the six-term exact sequences (Proposition \ref{prop:sequence_exts}  and Theorem \ref{thm:last}).
In Section \ref{dmot} we discuss briefly the category of  Anderson dual \tm motives (over a perfect field) in the context 
of  $\Ext^1$ bifunctor.

\section{Preliminaries}\label{prelim} 
Let $p$ be a rational prime and $A={\mathbb F}_q[t]$  the polynomial ring over the finite field with $q=p^m$ elements and let $K$ be a field of characteristic $p$. 
We  say that $K$ is an \textbf{$A-$field} when we fix an ${\mathbb F}_q-$linear homomorphism ${\iota}: A \rightarrow K$ with 
	$\theta:=\iota(t)$. Denote by $K\{\tau\}$  the ring of twisted polynomials in $\tau$  with coefficients in $K$ i.e. satisfying the additional relation 
	$\tau x=x^q\tau$ for $x\in K$, see \cite{g96}.

In \cite{a}  G. Anderson developed  a generalization of the notion of a Drinfeld module called a \tm module. 
\begin{dfn}\label{tmod}
A $d$-dimensional \tm module over an $A-$field $K$ is an ${\mathbb F}_q$ - algebra homomorphism
\begin{equation}\label{tmodule}
{\Phi} : {\mathbb F}_q[t]\rightarrow {\mathrm{Mat}}_d (K\{\tau\}),
\end{equation}
such that ${\Phi}(t)$, as a polynomial in $\tau$  with coefficients in ${\mathrm{Mat}}_d(K)$  is of the following form
\begin{equation}\label{tmod1}
{\Phi}(t)=({\theta}I_d+N)\tau^0+M_1{\tau}^1+\dots +M_r\tau^r,
\end{equation}
where $I_d $ is the identity matrix and $N$ is a nilpotent matrix. 
\end{dfn} 
In general, a \tm module over $K$ is an algebraic group $E$ defined over $K$ and isomorphic over $K$ to $G^d_a$  together with a choice of ${\mathbb{F}}_q$-linear endomorphism $t: E\rightarrow E$ such that $d(t-\theta)^n{\mathrm{Lie}}(E)=0$ for $n$ sufficiently large. Notice that in the last equality $d( \cdot)$ denotes the differential of an endomorphism of the algebraic group $E.$ The choice of  an isomorphism $E\cong G^d_a$ is equivalent to the choice of $\Phi.$
In order to  indicate this choice of coordinates, we write $E=(G_a^d,\Phi)$. 
\begin{rem}
	Since the map $\Phi$ is a homomorphism of ${\mathbb F}_q$-algebras a \tm module is completely determined by the polynomial ${\Phi}_t$ i.e. by the  image of $t.$  In the sequel, for a ${\mathbf t}$-module $\Phi$, we will use the notations ${\Phi}(t)$ and ${\Phi}_t$ interchangeably. 
\end{rem}	
	The  degree $r$ of ${\Phi}_t$ is called the \textbf{rank} of $\Phi$ and it is denoted as $\rk\Phi$.
\begin{rem}	
	 Notice that this definition of a rank is not the usual one. In fact, usually  the rank of $\Phi$ is defined as the rank of the period lattice of $\Phi$ as a $d{\Phi}(A)$-module (cf. \cite[Section t-modules]{bp20}). Since our paper concerns the algebraic site of the theory of ${\mathbf t}$-modules the adapted by us equivalent definition of  rank seems more convenient.
\end{rem}

\begin{dfn} 
	Let $\Phi$ and $\Psi$ be a two \tm modules of dimension $d$ and $e$, respectively. A morphism $f:\Phi\lra \Psi$ of \tm modules over $K$ is a matrix $f\in\mathrm{Mat}_{d\times e}(K\{\tau\})$
	 such that
	$$
	f\Psi(t) = \Phi(t)f.
	$$
\end{dfn}
In general, if $E=(G_a^d,\Phi)$ and $F=(G_a^e,\Psi)$, then a \tm module morphism  $f : F\rightarrow E$ is a
morphism of commutative algebraic groups $f : G^e_a\rightarrow G^d_a$ over $K$ commuting with the action of $A$ i.e:
$$
	f\Psi(t) = \Phi(t)f.
$$

The set of all morphisms $f:  F\rightarrow E$ will be denoted as $\Hom_{\tau}(F,E).$ We decided to add the subscript $\tau$ 
for the consistency with the notation used for the group of extensions i.e. $\Ext^1_{\tau}(F,E).$

Every \tm module $E=(G_a^d,\Phi)$ induces an ${\mathbb F}_q[t]-$module structure on $K^d$, where  multiplication by $t$ is given by  evaluation of $\Phi_t$, i.e. 
$$t*x=\Phi_t(x)\quad \textnormal{for} \quad x\in K^d.$$ 
This $\F_q[t]-$module is called the  Mordell-Weil group of $\Phi$ and it is denoted as $\Phi(K^d).$ Similarly, each morphism $f:E=(G_a^d,\Phi)\lra F=(G_a^e,\Psi)$ of \tm modules induces a morphism $\Phi(f):\Phi(K^d)\lra \Psi(K^e)$ of $\F_q[t]-$modules. Then $\Phi(-)$ is a covariant inclusion functor from the category of \tm modules  to the category of $\F_q[t]-$modules. In the Example  \ref{exm} we will see that this functor is not full. 

 Dimension one \tm modules are called \textbf{Drinfeld} modules and the Drinfeld module $C:\F_q[t]\lra K\{\tau\}$, given by the formula $C(t)=\theta+\tau$ is called the \textbf{Carlitz module}.

We also consider the \textbf{zero \tm module} of the form $0:\F_q[t]\lra 0.$ Then the category of \tm modules becomes an $\F_q[t]-$linear additive category and  the notion of a short exact sequence (\ref{seq}) makes sense.

Recall from \cite{pr}, that each extension of a $\mathbf t$-module $\Phi:\F_q[t]\lra {\mathrm{Mat}}_d(K\{\tau\})$ by $\Psi:\F_q[t]\lra {\mathrm{Mat}}_e(K\{\tau\})$ can be determined by an $\F_q-$linear map $\delta:\F_q[t]\lra {\mathrm{Mat}}_{e\times d}(K\{\tau\})$ such that
\begin{equation}\label{delta}	
	\delta(ab)=\Psi(a)\delta(b)+\delta(a)\Phi{(b)}\quad \textnormal{for all}\quad a,b\in\F_q[t].
\end{equation}
	
	Such maps are called biderivations, and we will denote the $\F_q-$vector space of all biderivations by $\Der(\Phi, \Psi)$. 
	In the sequel we will, as usual,  denote ${\Phi}_a\,:={\Phi(a)}$, ${\Psi}_a\,:={\Psi(a)}$ etc..
	It is easy to check that the biderivation $\delta$ is uniquely determined, by the value $\delta(t)\in  {\mathrm{Mat}}_{e\times d}(K\{\tau\})$. Then the map $\delta\mapsto \delta(t)$ induces the $\F_q-$linear isomorphism of the vector spaces $\Der(\Phi, \Psi)$ and $ {\mathrm{Mat}}_{e\times d}(K\{\tau\})$. Let $\delta^{(-)}: {\mathrm{Mat}}_{e\times d}(K\{\tau\})\lra \Der(\Phi, \Psi)$ be an $\F_q-$linear map defined by the following formula:
	$$\delta^{(U)}(a)=U\Phi_a - \Psi_aU\quad \textnormal{for all}\quad a\in \F_q[t]\quad\textnormal{and}\quad U\in {\mathrm{Mat}}_{e\times d}(K\{\tau\}).$$
	The image of the map  $\delta^{(-)}$ is denoted by $\Derin(\Phi, \Psi)$, and the elements of $\Derin(\Phi, \Psi)$ are called  inner biderivations. In addition, both $\Ext_{\tau}^1(\Phi,\Psi)$ and $\Der(\Phi, \Psi)/\Derin(\Phi, \Psi)$ have  $\F_q[t]$-module structures. Then from \cite[Lemma 2.1]{pr} there is an $\F_q[t]-$module isomorphism 
	\begin{align}\label{iso_ext}
		\Ext^1_{\tau}(\Phi,\Psi)\cong\mathrm{coker}\delta^{(-)}=\Der(\Phi, \Psi)/\Derin(\Phi, \Psi).
	\end{align}

	Because the map $\delta\mapsto \delta(t)$ is an isomorphism, we will identify the coset $\delta+\Derin(\Phi, \Psi)$ with the coset $\delta(t)+\Derin(\Phi_t, \Psi_t)$ in ${\mathrm{Mat}}_{e\times d}(K\{\tau\})$, where
	$\Derin(\Phi_t, \Psi_t)=\Big\{ \delta(t)\mid \delta\in \Derin(\Phi, \Psi)\Big\}$.

\begin{rem}\label{notation} 
In what follows we adopt the following notation:	$\up ci:=c^{q^i}$ is the evaluation of the Frobenius twist $\tau^i$ on an element $c\in K$. In particular $\up c0=c.$ 
\end{rem}

	\section{Extension of Drinfelds modules with t-modules structures}\label{extension}

	In this section we  study extensions of Drinfeld modules using  biderivations.  Our aim is to equip the extension space $\Ext^1_{\tau}(\phi, \psi)$ with the natural $\mathbf t$-module structure. 
	This is generalization of M.A Papanikolas and N. Ramachandran results concerning the case where $\psi$ is the Carlitz module. Further, we will give some basic properties of these $\mathbf t$-modules.

	\begin{lem}\label{lemma_iso_Ext}
		Let $\phi$ and $\psi$ be Drinfeld modules, such that $\rk\phi>\rk\psi$. Then there is an $\F_q-$linear isomorphism  
		$$\Ext^1_{\tau}(\phi, \psi)\cong K\{\tau\}_{<\rk\phi}:= \Big\{w\in K\{\tau\}\mid \deg_\tau w<\rk\phi   \Big\}$$ 
	\end{lem}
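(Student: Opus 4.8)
The plan is to use the isomorphism (\ref{iso_ext}), which identifies $\Ext^1_{\tau}(\phi,\psi)$ with the $\F_q$-vector space $\Der(\phi,\psi)/\Derin(\phi,\psi)$, and in turn with the quotient $\mathrm{Mat}_{1\times 1}(K\{\tau\})/\Derin(\phi_t,\psi_t) = K\{\tau\}/\Derin(\phi_t,\psi_t)$ via $\delta\mapsto\delta(t)$. Since $\phi$ and $\psi$ are Drinfeld modules, $e=d=1$, so all the matrices are just elements of $K\{\tau\}$; write $\phi_t=\theta+a_1\tau+\dots+a_r\tau^r$ with $a_r\neq 0$ and $r=\rk\phi$, and $\psi_t=\theta+b_1\tau+\dots+b_s\tau^s$ with $b_s\neq 0$ and $s=\rk\psi<r$. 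The inner biderivations $\Derin(\phi_t,\psi_t)$ are exactly the elements of the form $\delta^{(U)}(t)=U\phi_t-\psi_t U$ for $U\in K\{\tau\}$, so the task reduces to showing that
$$
K\{\tau\} = K\{\tau\}_{<r} \oplus \{\, U\phi_t-\psi_t U \mid U\in K\{\tau\}\,\}
$$
as $\F_q$-vector spaces; then the composite $K\{\tau\}_{<r}\hookrightarrow K\{\tau\}\twoheadrightarrow K\{\tau\}/\Derin(\phi_t,\psi_t)\cong\Ext^1_{\tau}(\phi,\psi)$ is the desired $\F_q$-linear isomorphism.

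First I would establish the ``spanning'' part: every $w\in K\{\tau\}$ can be reduced modulo $\Derin(\phi_t,\psi_t)$ to something of $\tau$-degree $<r$. The key observation is that for $U=c\tau^n$ (with $c\in K$, $n\geq 0$), the element $\delta^{(c\tau^n)}(t)=c\tau^n\phi_t-\psi_t c\tau^n$ has top term $c\up{a_r}{n}\tau^{n+r}$ coming from $c\tau^n\phi_t$ (since $\deg_\tau(\psi_t c\tau^n)=s+n<r+n$), and $\up{a_r}{n}=a_r^{q^n}\neq 0$. Hence, given any $w$ of $\tau$-degree $m\geq r$, with leading coefficient $w_m\tau^m$, we can choose $c\in K$ with $c\up{a_r}{m-r}=w_m$ — here we use that $K$ is an $A$-field, in particular perfect enough that $x\mapsto x^{q^{m-r}}$ is surjective on $K$; actually since we only need to \emph{hit} $w_m/$ a given nonzero element and $K$ may not be perfect, I would instead argue by choosing $U$ with leading term in $K$ and noting $c\mapsto c\up{a_r}{n}$ is an additive injection, then do the reduction from the top using that the map $\delta^{(-)}$ restricted to $\tau$-degree $n$ part of $U$ surjects onto the $\tau^{n+r}$-graded piece — let me reconsider and simply do a downward induction on $\deg_\tau w$: subtract an inner biderivation $\delta^{(U)}(t)$ with $U$ supported in degree $\deg_\tau w - r$ killing the leading term, then iterate. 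The subtlety about perfectness should be handled by noting $c\up{a_r}{n}$ ranges over $a_r^{q^n}\cdot K^{q^n}$... which is not all of $K$ unless $K$ is perfect — so the cleanest route is: do the reduction allowing $U\in K\{\tau\}$ with coefficients adjusted so that at each stage we kill the top coefficient exactly, which works because multiplication by the fixed nonzero $a_r$ and then a $q^n$-power only requires solving $x^{q^n}=y$, and this is solvable in $K$ since $K$ has characteristic $p$ and ... I will instead invoke that in this setting one reduces over $K$ and the relevant map is surjective; the honest statement is that $\{U\phi_t - \psi_t U\}$ contains, for each $n\ge 0$, all of $K^{q^n}a_r^{q^n}\tau^{n+r}$ plus lower-order terms, which suffices to reduce any $w$ to degree $<r$ once we also allow $\F_q$-linear combinations — this is exactly the content worked out in \cite{pr} for $\psi=C$ and goes through verbatim.

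Next I would prove ``independence'': if $w\in K\{\tau\}_{<r}$ lies in $\Derin(\phi_t,\psi_t)$, then $w=0$. Suppose $w = U\phi_t-\psi_t U$ with $U=\sum_{i=0}^{m} c_i\tau^i$, $c_m\neq 0$ and $m$ maximal; then $\deg_\tau(U\phi_t)=m+r$ and $\deg_\tau(\psi_t U)=m+s<m+r$, so the $\tau^{m+r}$-coefficient of $w$ is $c_m\up{a_r}{m}\neq 0$, forcing $m+r<r$, i.e. $m<0$, i.e. $U=0$ and $w=0$. This shows $K\{\tau\}_{<r}\cap\Derin(\phi_t,\psi_t)=0$, completing the direct-sum decomposition.

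The main obstacle I anticipate is the surjectivity/reduction step when $K$ is not perfect: the inner biderivations only let one subtract leading terms whose coefficient is a $q^n$-th power times $a_r^{q^n}$, so a priori one cannot always kill an arbitrary leading coefficient in $K$. The resolution is that $K\{\tau\}_{<r}$ and $\Derin(\phi_t,\psi_t)$ together still span: one shows that the $\F_q$-linear map $K\{\tau\}_{<r}\oplus K\{\tau\}\to K\{\tau\}$, $(w,U)\mapsto w + U\phi_t-\psi_t U$, is surjective by a degree argument — for a target of degree $m\ge r$, pick $U=c\tau^{m-r}$ with $c$ chosen so $c a_r^{q^{m-r}}$ equals the leading coefficient; this \emph{is} solvable because... in fact the cleanest fix, and the one I would write up, is to only claim the isomorphism as an $\F_q$-vector space isomorphism via dimension count when things are finite, or to observe (following \cite{pr}) that $w\mapsto$ (its class) restricted to $K\{\tau\}_{<r}$ is injective by the independence step, and surjective because given any class one keeps subtracting $\delta^{(U)}(t)$ for monomial $U$ to strictly decrease the degree — and this decrease is possible since $a_r\ne 0$ makes $c\mapsto c a_r^{q^{n}}$ a nonzero, hence ($K$ a field) surjective map $K\to K\cdot a_r^{q^n}$, wait, that's not $K$ either. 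I will present the reduction exactly as in \cite{pr}, where the key point is that one only ever needs $c \mapsto c\,a_r^{q^n}$ to be surjective \emph{onto the span needed}, and the careful bookkeeping (leading coefficient lies in the $K$-span after extracting the $\F_q$-structure) is precisely what makes the statement an $\F_q$-linear, not $K$-linear, isomorphism — this is the heart of the lemma and where I would spend the care.
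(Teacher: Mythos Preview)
Your approach is the same as the paper's, and your injectivity (``independence'') argument is correct and matches it. The gap is in the reduction step, where you manufacture a difficulty that is not there.

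You correctly compute that $\delta^{(c\tau^n)}(t)=c\tau^n\phi_t-\psi_t\,c\tau^n$ has leading term $c\,a_r^{(n)}\tau^{n+r}$, and you correctly write down the equation $c\,a_r^{(m-r)}=w_m$ needed to kill the top coefficient of a given $w$ of degree $m\ge r$. But this equation is \emph{linear} in $c$: the Frobenius twist in $c\tau^n\cdot a_r\tau^r=c\,a_r^{q^n}\tau^{n+r}$ lands on $a_r$, not on $c$. Hence $c=w_m\big/ a_r^{q^{m-r}}$ solves it by plain division in the field $K$; no $q$-th roots, no perfectness hypothesis, no appeal to ``$\F_q$-linear versus $K$-linear'' subtleties. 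All of your hedging about $K^{q^n}$, ``span needed'', and the long aside about perfectness stems from momentarily treating the unknown $c$ as if it sat under the Frobenius.

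Once this is fixed, the argument is exactly the paper's: every nonzero $\delta^{(c\tau^k)}(t)$ has $\tau$-degree precisely $r+k\ge r$, so distinct elements of $K\{\tau\}_{<r}$ represent distinct cosets; and for any $w$ of degree $m\ge r$ one subtracts $\delta^{(c\tau^{m-r})}(t)$ with $c=w_m/a_r^{q^{m-r}}$ to drop the degree, then iterates. That is the whole proof --- you should simply delete the perfectness discussion and state the division step plainly.
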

	\begin{proof}
	We will identify the biderivation $\delta$ with the polynomial $w(\tau)=\delta(t)\in K\{\tau\}$. 
	Let $\phi_t=\theta+\sum\limits_{i=1}^na_i\tau^i$ and $\psi_t=\theta+\sum\limits_{j=1}^mb_j\tau^j$, then $n>m$. 	From isomorphism \eqref{iso_ext} it suffices to show that 
	$\Der(\phi, \psi)/\Derin(\phi, \psi)\cong K\{\tau\}_{<\rk\phi}$. 
 To prove this, we  consider generators of the space $\Derin(\phi, \psi)$ of the form
 $\delta^{(c\tau^k)}$ where  $k=0,1,2,\dots$ and $c\in K.$ We have:
	\begin{align}\label{iso1}
	\begin{split}
		\delta^{(c\tau^k)}(t)=c\tau^k\phi_t-\psi_t c\tau^k= c\tau^k\Big(\theta+\sum\limits_{i=1}^na_i\tau^i \Big)- \Big( \theta+\sum\limits_{j=1}^mb_j\tau^j \Big)c\tau^k\\
		=c \up{\theta}k \tau^k+\sum\limits_{i=1}^nc\up{a_i}k\tau^{i+k}-c\theta \tau^k-\sum\limits_{j=1}^m\up cj b_j\tau^{j+k}\\
		= c(\up\theta k-\theta)\tau^k+\sum\limits_{j=1}^m\Big(c\up{a_j}k-\up cj b_j\Big)\tau^{j+k}+ \sum\limits_{j=m+1}^{n}  c\up{a_i}k\tau^{i+k}\\
		= c(\up\theta k-\theta)\tau^k+\sum\limits_{j=k+1}^{m+k}\Big(c\up{a_{j-k}}k-\up c{j-k} b_{j-k}\Big)\tau^{j}+ \sum\limits_{j=m+k+1}^{n+k}  c\up{a_{j-k}}k\tau^{j}.
		\end{split}
	\end{align}
	Because $\deg_\tau \delta^{(c\tau^k)}(t)=n+k$  for $c\neq 0$,  each non-zero inner biderivation is given by a polynomial with $\tau-$degree greater or equal to $n$. Therefore, if $w,\widehat{w}\in K\{\tau\},\,\,w\neq \widehat{w}$ are such that $\deg_\tau w<n$ and $\deg_\tau \widehat{w}< n$, then $w$ and $\widehat{w}$ represent different cosets in $\Der(\phi, \psi)/\Derin(\phi, \psi)$. On the other hand given a polynomial $w(\tau) =\sum\limits_{i=1}^{n+l} a_i\tau^i\in K\{\tau\}$ with $l>0$, one can find $c_l$ (cf. (\ref{iso1}))  such that the polynomial  $w(\tau)-\delta^{(c_l\tau^l)}$ has $\tau$-degree at most 
	$n+l-1$ and represents the same class as $w$ in  $\Der(\phi, \psi)/\Derin(\phi, \psi)$. By downward induction we obtain:
	$$\widetilde w(\tau)=w(\tau)-\delta^{(c_l\tau^l)}-\cdots-\delta^{(c_1\tau^1)}-\delta^{(c_0)}$$	 where $\deg_\tau\widetilde w<n$ and both $w$ and $\widetilde w$ represent the same coset. 
	
	Therefore the isomorphism $\delta\mapsto \delta(t)$ induces the  isomorphism of $\F_q-$vector spaces $\Der(\phi, \psi)/\Derin(\phi, \psi)\cong K\{\tau\}_{<\rk\phi}$.
	\end{proof}
\begin{cor}
Let $\delta\in  K\{\tau\}_{<\rk\phi}$ then $\delta$ corresponds to the extension:
\begin{equation}\label{expl}
0\rightarrow \psi \rightarrow \Gamma \rightarrow \phi\rightarrow 0
\end{equation}
where $\Gamma$ is given by the matrix $\Gamma_t=\begin{bmatrix}
\phi_t& 0  \\
\delta & \psi_t
\end{bmatrix}.$  Morever, this correspondence is  compatible with the Baer sum  of extensions i.e. 
if $\delta_1,\,\, \delta_2 \in  K\{\tau\}_{<\rk\phi}$ correspond to $\Gamma^1_t=\begin{bmatrix}
\phi_t& 0  \\
\delta_1 & \psi_t
\end{bmatrix}$   and $\Gamma^2_t=\begin{bmatrix}
\phi_t& 0  \\
\delta_2 & \psi_t
\end{bmatrix}
$  respectively then the Baer sum of extensions $\Gamma^1$ and $\Gamma^2$ corresponds to 
$\Gamma$  such that $\Gamma_t=\begin{bmatrix}
\phi_t& 0  \\
\delta_1+ \delta_2& \psi_t
\end{bmatrix}.
$
\qed
\end{cor}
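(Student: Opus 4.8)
The plan is to check, in turn, that the block‑triangular matrix in the statement really defines a $\mathbf t$-module extension, that this extension is the one attached to the biderivation $\delta$ under the isomorphism \eqref{iso_ext}, and finally that the construction is additive for the Baer sum.

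First I would verify that $\Gamma_t=\begin{bmatrix}\phi_t&0\\ \delta&\psi_t\end{bmatrix}$ satisfies Definition~\ref{tmod}: writing $\delta=\sum_{i=0}^{\rk\phi-1}c_i\tau^i$ with $c_i\in K$, the $\tau^0$-coefficient of $\Gamma_t$ is $\theta I_2+N$ with $N=\begin{bmatrix}0&0\\ c_0&0\end{bmatrix}$ nilpotent ($N^2=0$), the higher $\tau$-coefficients being arbitrary matrices in $\mathrm{Mat}_2(K)$. Hence $\Gamma_t$ has the admissible form, and since a $\mathbf t$-module is determined by the image of $t$ this defines a $\mathbf t$-module $\Gamma\colon\F_q[t]\to\mathrm{Mat}_2(K\{\tau\})$. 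The conceptual reason $\Gamma$ is multiplicative is exactly the cocycle identity \eqref{delta}: the lower-left entry of $\begin{bmatrix}\phi_a&0\\ \delta(a)&\psi_a\end{bmatrix}\begin{bmatrix}\phi_b&0\\ \delta(b)&\psi_b\end{bmatrix}$ is $\psi_a\delta(b)+\delta(a)\phi_b=\delta(ab)$, an observation that works verbatim for general $\Phi,\Psi$ in place of $\phi,\psi$. I would then check that $\iota=\begin{bmatrix}0\\ 1\end{bmatrix}$ and $\pi=\begin{bmatrix}1&0\end{bmatrix}$ are $\mathbf t$-module morphisms, i.e. $\iota\psi_t=\Gamma_t\iota$ and $\pi\Gamma_t=\phi_t\pi$ (one-line matrix identities), and that the underlying sequence $0\to G_a\xrightarrow{\iota}G_a^2\xrightarrow{\pi}G_a\to 0$ of algebraic groups is split exact; so \eqref{expl} is a genuine short exact sequence of $\mathbf t$-modules.

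Next I would identify its class in $\Ext^1_\tau(\phi,\psi)$ with $\delta$ under \eqref{iso_ext}. Recall from \cite[Lemma 2.1]{pr} that the biderivation attached to an extension $0\to\psi\xrightarrow{\iota}M\xrightarrow{\pi}\phi\to 0$ together with an $\F_q$-linear section $s\colon G_a\to M$ of $\pi$ is the map $\delta_s$ determined by $M(a)s-s\phi_a=\iota\circ\delta_s(a)$ (this lands in $\ker\pi=\im\iota$ since $\pi M(a)=\phi_a\pi$ and $\pi s=\id$). For \eqref{expl} the evident section is $s=\begin{bmatrix}1\\ 0\end{bmatrix}$, and $\Gamma_t s-s\phi_t=\begin{bmatrix}\phi_t\\ \delta\end{bmatrix}-\begin{bmatrix}\phi_t\\ 0\end{bmatrix}=\begin{bmatrix}0\\ \delta\end{bmatrix}=\iota\circ\delta$, so $\delta_s=\delta$; a different choice of $s$ changes $\delta_s$ only by an inner biderivation $\delta^{(u)}$, compatibly with \eqref{iso_ext}. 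Combined with Lemma~\ref{lemma_iso_Ext}, which exhibits $\delta$ as the canonical representative of its coset in $\Der(\phi,\psi)/\Derin(\phi,\psi)$ of $\tau$-degree $<\rk\phi$, this gives the first assertion.

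For the Baer-sum statement I would argue directly rather than invoke abstract additivity of \eqref{iso_ext}. Realize $\Gamma^1,\Gamma^2$ on $G_a\times G_a$ with coordinates $(x,y_i)$, projections $(x,y_i)\mapsto x$ and inclusions $a\mapsto(0,a)$. The Baer sum is the pullback along the diagonal $\phi\to\phi\oplus\phi$ of the pushout along the fold map $\psi\oplus\psi\to\psi$ of $\Gamma^1\oplus\Gamma^2$; its underlying group is $\{(x,y_1,y_2)\}\big/\{(0,a,-a)\}$, and $(x,y_1,y_2)\mapsto(x,y_1+y_2)$ identifies it with $G_a\times G_a$. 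On $\{(x,y_1,y_2)\}$ the $t$-action is $(x,y_1,y_2)\mapsto(\phi_t x,\ \delta_1 x+\psi_t y_1,\ \delta_2 x+\psi_t y_2)$, which pushes forward to $(\phi_t x,\ (\delta_1+\delta_2)x+\psi_t(y_1+y_2))$; with the obvious inclusion and projection this is precisely the extension with matrix $\begin{bmatrix}\phi_t&0\\ \delta_1+\delta_2&\psi_t\end{bmatrix}$. The one point needing a word is that these particular pushouts and pullbacks exist in the merely additive category of $\mathbf t$-modules and are computed as for vector groups — but every object here is a vector group $G_a^n$ and the relevant diagrams split, so it is elementary (and in any case subsumed by the general results of Section~\ref{push}). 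I expect the only genuinely delicate issue to be pinning down that the explicit block-matrix rule really is the isomorphism \eqref{iso_ext}, i.e. matching sign conventions; the computations themselves are routine.
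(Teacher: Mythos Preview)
Your argument is correct. Note, however, that in the paper this corollary carries only a \qed\ and no proof: it is meant as an immediate consequence of the isomorphism \eqref{iso_ext}, which is quoted from \cite[Lemma~2.1]{pr} already as an $\F_q[t]$-module isomorphism, together with Lemma~\ref{lemma_iso_Ext}. In particular the Baer-sum compatibility is built into the statement that \eqref{iso_ext} is an isomorphism of $\F_q[t]$-modules (hence of abelian groups), so the paper does not recompute it.

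Your write-up therefore supplies the details behind that \qed. The first two paragraphs (checking that $\Gamma_t$ is a $\mathbf t$-module, that $\iota,\pi$ are morphisms, and that the section $s=\begin{bmatrix}1\\0\end{bmatrix}$ recovers $\delta$) are exactly the verification underlying \cite[Lemma~2.1]{pr} specialized to Drinfeld modules. For the Baer sum you choose to redo the pushout--pullback construction by hand rather than simply invoke additivity of \eqref{iso_ext}; this is a genuine alternative and has the virtue of being self-contained, at the cost of some extra bookkeeping. Either route is fine here.
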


\begin{rem}
Notice that $\Ext^1_{\tau}(\phi,\psi)$ is a reflexive bilinear space with the bilinear product $<\delta_1,\delta_2>=\sum_{i=0}^{n-1}a_ib_i$ where $\delta_1=a_0+\dots+a_{n-1}\tau^{n-1}$ and $\delta_2=b_0+\dots+b_{n-1}\tau^{n-1}.$
\end{rem}

	Let $\phi$ and $\psi$ be Drinfeld modules and let $r:=\rk\phi-\rk\psi>0$.
	Recall from \cite{pr} that the structure of the $\F_q[t]$-module on $\Der(\phi, \psi)/\Derin(\phi, \psi)$ is defined by the following formula:
	\begin{equation}\label{modstr} 
	a*\Big(\delta+\Derin(\phi, \psi)\Big):=\psi_a \delta+\Derin(\phi, \psi)
	\end{equation}
	for $a\in\F_q[t]$ and $\delta\in\Der(\phi, \psi)$. 
	\begin{rem}\label{simplification}
	For  simplicity, we  omit the notation $+\Derin(\phi, \psi)$ when considering the coset $\delta+\Derin(\phi, \psi)$ in the space $\Der(\phi, \psi)/\Derin(\phi, \psi)$.
	\end{rem}

	\section{An easy but essential example}\label{examples}
	
	In this section we give an example to illustrate how to determine the structure of a $\mathbf t$-module on the space of extensions of Drinfeld modules.  
	\begin{ex}\label{ex_first}
		Let ${\phi}_t=\theta +{\tau}^3$ and $\psi_t=\theta+\tau^2.$ By Lemma \ref{lemma_iso_Ext}  $$\Ext^1_{\tau}(\phi, \psi)\cong K\{\tau\}_{<3}= \{ c_0+c_1\tau +c_2\tau^2 \mid c_i\in K \}.$$ 
		In order to equip $\Ext^1_{\tau}(\theta +{\tau}^3,\theta+\tau^2)$ with the $\mathbf t$-module structure we will transfer the structure of the $\F_q[t]-$module from  $\Ext^1_{\tau}(\theta +{\tau}^3,\theta+\tau^2)$ to the space $K\{\tau\}_{<3}$ via the isomorphism from Lemma \ref{lemma_iso_Ext}. Since each element of $K\{\tau\}_{<3}$ is of the form $\sum\limits_{i=0}^{2}c_i\tau^i$, it is sufficient to determine the value of multiplication by $t$ on the generators $c_i\tau^i$ for $i=0,1,2$, where $c_i\in K$. 
		
		From \eqref{modstr} the value $t*(c_0)$ can be computed in the following way:  
		$$t*(c_0)=\psi_t\cdot c_0=\big(\theta+\tau^2\big)\cdot c_0=\theta c_0+\up{c_0}2\tau^2$$
		Similarly,
		$$t*(c_1\tau)=\psi_t\cdot c_1\tau=\big(\theta+\tau^2\big)\cdot c_1\tau=\theta c_1\tau+\up{c_1}2\tau^3.$$
		Because the polynomial $\theta c_1\tau+\up{c_1}2\tau^3$ does not belong to $K\{\tau\}_{<3}$,  like in the proof of the Lemma \ref{lemma_iso_Ext}, we  reduce the term $\up{c_1}2\tau^3$ by the generator 
		\begin{align*}
			\delta^{(c)}(t)&=c\phi_t-\psi_t c= c(\theta +{\tau}^3)-(\theta +{\tau}^2)c=-\up c2\tau^2+c\tau^3
		\end{align*}
		for $c=\up{c_1}2$. Then 
		 $$t*(c_1\tau)=\theta c_1\tau+\up{c_1}2\tau^3-\delta^{(c)}=\theta c_1\tau+\up {c_1}4\tau^2.$$
		Next,
		$$t*(c_2\tau^2)=\psi_t\cdot c_2\tau^2=\big(\theta+\tau^2\big)\cdot c_2\tau^2=\theta c_2\tau^2+\up{c_2}2\tau^4.$$
		The term $\up{c_1}2\tau^4$ can be reduced by the generator
		\begin{align*}
			\delta^{(c\tau)}(t)&=c\tau\phi_t-\psi_t c\tau= c\tau(\theta +{\tau}^3)-(\theta +{\tau}^2)c\tau\\
			&=
			-c\big(\up\theta1-\theta\big)\tau-\up c2\tau^3+c\tau^4,
		\end{align*} for $c=\up{c_2}2$. Therefore 
	$$t*(c_2\tau^2)=t*(c_2\tau^2)-\delta^{(c\tau)}=\up{c_2}2\big(\theta-\up\theta1\big)\tau+\theta c_2\tau^2+ \up{c_2}4\tau^3.$$
	The term $\up{c_2}4\tau^3$ can be reduced by the generator $\delta^{(c)}$ for $c=\up{c_2}4$.
	Hence 
	\begin{align*}
		t*(c_2\tau^2)&=t*(c_2\tau^2)-\delta^{\big(\up{c_2}2 \tau\big)}-\delta^{\big(\up{c_2}4\big)}\\
		&=\up{c_2}2\big(\theta-\up\theta1\big)\tau+\Big(\theta c_2+\up{c_2}6\Big)\tau^2.
	\end{align*}
Now, choose the basis $e_0=1, e_1=\tau, e_2=\tau^2$ in $K\{\tau\}_{<3}$. In this basis the value $t*(c_0)$ has the following coordinates 
	\begin{align*}
		t*(c_0)&=\theta c_0+\up{c_0}2\tau^2= \big[\theta c_0, 0,  \up{c_0}2\big].
	\end{align*}
Notice that $\up{c_0}2$ is the value of the polynomial $\tau^2$ at $c_0$. Let $\tau^2\mid_{c_0}:=\up{c_0}2$. Then we can express $t*(c_0)$ in the following form 
$$	t*(c_0)= \big[\theta, 0,  \tau^2 \big]_{\mid c_0}.$$
In a similar way we obtain: 
		\begin{align*}
			t*c_0=c_0\theta+c_0^{(2)}\tau^2=[\theta, 0, \tau^2]_{\mid c_0}\\
			t*c_1\tau=c_1\theta\tau+c_1^{(4)}\tau^2=[0, \theta,  \tau^4]_{\mid c_1}\\
			t*c_2\tau^2=c_2^{(2)}(\theta - \theta^{(1)})\tau+(c_2\theta+c_2^{(6)})\tau^2=[0, (\theta -\theta^{(1)})\tau^2,  \theta+\tau^6]_{\mid c_2}
		\end{align*}
Then the multiplication by $t$ on $K\{\tau\}$ can be expressed by the following matrix:
		\begin{equation}\label{pi1}
			\Pi_t=\begin{bmatrix}
				\theta& 0 & 0\\
				0& \theta & \theta-\theta^{(1)}\\
				\tau^2 & \tau^4 & \theta+\tau^6
			\end{bmatrix}
		\end{equation}
	This matrix induces the homomorphism of $\F_q-$algebras
	$$\Pi:\F_q[t]\lra {\mathrm{Mat}}_3(K\{\tau\}),$$
	such that 
	$$\Pi_t=\theta I_3+
\begin{bmatrix}
	0& 0 & 0\\
	0&0 & \big(\theta-\up\theta 1\big)\\
	1& 0 & 0
\end{bmatrix}\tau^2
+\begin{bmatrix}
	0& 0 & 0\\
	0&0 & 0\\
	0& 1 & 0
\end{bmatrix}\tau^4+
\begin{bmatrix}
	0& 0 & 0\\
	0&0 & 0\\
	0& 0 & 1
\end{bmatrix}\tau^6.$$
Therefore $\Pi$ gives  rise to  a $\mathbf t$-module structure on $\Ext^1_{\tau}(\theta +{\tau}^3,\theta+\tau^2)$.
\end{ex}

\section{$\Ext^1_{\tau}(\phi, \psi)$ for Drinfeld modules $\phi$ and $\psi$ with $\rk\phi>\rk \psi$  }\label{extension1}
We have the following:
\begin{prop}\label{prop:ext_as_t_module}
	Let $\phi$ and $\psi$ be Drinfeld modules such that $\rk\phi>\rk \psi$. Then $\Ext^1_{\tau}(\phi, \psi)$ has a natural structure of a $\mathbf t$-module. This is given by the  map
	$\Pi:\F_q[t]\lra {\mathrm{Mat}}_{\rk\phi}(K\{\tau\})$, where
	\begin{equation} \label{macierz_dla_exta}
		\Pi_t=\left[\begin{array}{c|ccc}
			\theta & 0&\dots & 0\\\hline
			\delta_1 & &&\\
			\vdots &  &\Pi^0_t &\\
			\delta_{n-1} & &&
		\end{array}\right].
	\end{equation}
\end{prop}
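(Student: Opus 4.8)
The plan is to make rigorous the reduction procedure already illustrated in Example \ref{ex_first}. By Lemma \ref{lemma_iso_Ext} we identify $\Ext^1_{\tau}(\phi,\psi)$ as an $\F_q$-vector space with $K\{\tau\}_{<n}$, where $n=\rk\phi$, via $\delta\mapsto\delta(t)$, and by \eqref{modstr} the $\F_q[t]$-module structure transported to $K\{\tau\}_{<n}$ sends a coset represented by $w\in K\{\tau\}_{<n}$ to the coset of $\psi_t w$. So the whole content of the proposition is: (i) $t*(-)$, written in the monomial basis $e_0=1,\dots,e_{n-1}=\tau^{n-1}$, is given by a matrix $\Pi_t$ over $K\{\tau\}$ which is $\F_q$-linear as a map $K^n\to K^n$ (automatic, since twisted polynomials act $\F_q$-linearly and reduction by inner biderivations is $\F_q$-linear in the leading coefficient, cf. \eqref{iso1}); and (ii) $\Pi_t$ has the shape \eqref{tmod1} required of a \tm module, i.e. $\Pi_t=\theta I_n+(\text{higher }\tau\text{-order terms})$ with the $\tau^0$-part equal to $\theta I_n$ (in particular its nilpotent part $N$ is zero). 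Given (i)–(ii), the $\F_q$-algebra homomorphism $\Pi:\F_q[t]\to\mathrm{Mat}_n(K\{\tau\})$ determined by $\Pi_t$ defines a \tm module, and by construction the induced $\F_q[t]$-module $\Pi(K^n)$ is exactly $\Ext^1_{\tau}(\phi,\psi)$, so the structure is the natural one. The block form \eqref{macierz_dla_exta} is then read off from the first column: $t*(c_0\tau^0)=\psi_t c_0 = \theta c_0 + \sum_j b_j^{(0)}\cdots$, which has $\tau$-degree $m<n$ hence needs no reduction, so the first column is $[\theta,\delta_1,\dots,\delta_{n-1}]^{t}$ with $\delta_i$ the coefficient-extracting operators coming from $\psi_t$; and the remaining columns come from the action on $c_1\tau,\dots,c_{n-1}\tau^{n-1}$ after reduction, which by definition is the submatrix $\Pi^0_t$.

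The key technical step is the reduction lemma: for $w\in K\{\tau\}$ with $\deg_\tau w = n+\ell$, $\ell\ge 0$, there is a unique $c_\ell\in K$ (namely $c_\ell = (\text{leading coeff of }w)\cdot a_n^{-1}$, where $a_n$ is the leading coefficient of $\phi_t$) such that $w-\delta^{(c_\ell\tau^\ell)}(t)$ has $\tau$-degree $\le n+\ell-1$, and $w$, $w-\delta^{(c_\ell\tau^\ell)}(t)$ represent the same coset. This is precisely the computation \eqref{iso1}: $\delta^{(c\tau^\ell)}(t)$ has leading term $c\,a_n^{(\ell)}\tau^{n+\ell}$, and since $a_n\neq 0$ and $K$ is an $A$-field of characteristic $p$, the $q^\ell$-power map is injective, so $c\mapsto c\,a_n^{(\ell)}$ is a bijection of $K$. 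Iterating this finitely many times (downward induction on the $\tau$-degree, as in Lemma \ref{lemma_iso_Ext}) produces the normal form in $K\{\tau\}_{<n}$, and crucially the whole reduction of $\psi_t\cdot(c_i\tau^i)$ is $\F_q$-linear in $c_i$ because each step multiplies the relevant coefficient by a fixed element of $K$ raised to a $q$-power and subtracts a term that is again $\F_q$-linear in that coefficient; compositions and $\F_q$-linear combinations of such operations (i.e. elements of $K\{\tau\}$ acting on the coefficients) remain in $K\{\tau\}$. This is exactly why each entry of $\Pi_t$ lands in $K\{\tau\}$.

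For the shape condition (ii): the $\tau^0$-coefficient of $\Pi_t$. The action $t*(c_i\tau^i)=\psi_t c_i\tau^i = \theta c_i\tau^i + \sum_{j\ge 1} b_j^{(i)} c_i^{(?)}\tau^{i+j}$ (schematically) has its unique $\tau^i$-term equal to $\theta c_i\tau^i$, and all reductions subtract inner biderivations $\delta^{(c\tau^\ell)}(t)$ with $\ell\ge 1$ when $i\ge 1$ (since a reduction is only needed when the $\tau$-degree reaches $n>i$, forcing $\ell\ge n-i\ge 1$... more carefully: the terms we must kill all have $\tau$-degree $\ge n$, and $\delta^{(c\tau^\ell)}(t)$ from \eqref{iso1} has lowest term $c(\theta^{(\ell)}-\theta)\tau^\ell$ with $\ell\ge 1$, hence never contributes to $\tau^0$ and, for the $i$-th generator, never to $\tau^i$ either when $i<\ell$ — one checks the $\tau^\ell$-contribution only lands below $\tau^n$). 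Hence the diagonal $\tau^0$-entry of the $i$-th column is $\theta$ and the off-diagonal $\tau^0$-entries vanish, giving $N=0$; the $i=0$ column contributes nothing above $\tau^m$ with $m<n$, consistent with \eqref{macierz_dla_exta}. I expect the main obstacle to be bookkeeping: carefully tracking, in the downward induction, that no reduction step ever injects a term of $\tau$-degree $\le i-1$ below the one being reduced in a way that would spoil either $\F_q$-linearity or the $\tau^0$-normalization — in other words, proving that the whole process is "triangular" with respect to $\tau$-degree and leaves the $\tau^0$ and low-degree structure as claimed. A clean way to handle this is to phrase the reduction as a section of the surjection $K\{\tau\}\onto K\{\tau\}_{<n}$ given by a well-defined sequence of operators and to prove by induction on $\deg_\tau$ the two invariants ($\F_q$-linearity in the seed coefficient; no new $\tau^0$ term) simultaneously.
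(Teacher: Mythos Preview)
Your approach is essentially the paper's: transport the $\F_q[t]$-action to $K\{\tau\}_{<n}$ via Lemma~\ref{lemma_iso_Ext}, compute $t*(c_i\tau^i)$ by applying $\psi_t$ and then reducing by inner biderivations, and verify by induction that the resulting matrix $\Pi_t$ has $\tau^0$-part equal to $\theta I_n$. The paper organizes the induction around four simultaneous invariants (its properties (i)--(iv)), and the block form \eqref{macierz_dla_exta} drops out exactly as you say.

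There is, however, a genuine slip in your justification that $N=0$. You argue that the reductions use $\delta^{(c\tau^\ell)}$ with $\ell\ge 1$; this is false. For $i=r$ (and more generally at the last step of every reduction chain) one must kill a term of $\tau$-degree exactly $n$, which requires $\ell=0$. The reason $\delta^{(c\tau^0)}(t)$ still contributes no $\tau^0$ term is simply that its constant coefficient $c(\theta^{(0)}-\theta)$ vanishes---but that only handles property~(i) (first row of $\Pi_t$), not the claim that the off-diagonal \emph{operator} entries of $\Pi_t$ have no $\tau^0$-part. For the latter you need a different invariant: at every reduction step the seed $c$ (as a function of the original $c_i$) is given by a skew polynomial in $K\{\tau\}$ with \emph{no constant term}. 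This holds at the first step because $c$ is a $K$-multiple of $c_i^{(m)}$ with $m\ge 1$, and it propagates because each subsequent seed is extracted from coefficients that are themselves $K\{\tau\}$-combinations of earlier seeds. This is precisely the content of the paper's properties (iii)--(iv), and it is what forces the $\tau^0$-part of every column to be $\theta e_i$. Your final paragraph gestures at the right inductive scheme, but the invariant you name (``no new $\tau^0$ term'') conflates the $\tau^0$-coefficient of the reduced \emph{element} with the $\tau^0$-part of the \emph{operator} entries; you need to track both separately.
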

At this moment, we will focus on the proof that the $\mathbf t$-module structure comes from the matrix \eqref{macierz_dla_exta}. Next we will see that the matrix $\Pi^0_t$ gives  rise to a $\mathbf t$-module structure on $\Ext^1_{0,{\tau}}(\phi, \psi)$ and the vector $[\delta_1,\cdots,\delta_{n-1}]^t$ determines the extension of  $\Ext^1_{0,\tau}(\phi, \psi)$  with $\Ext^1_{\tau}(\phi, \psi)$ as the middle term. 
\begin{proof}
	We will describe an algorithm that  allows us to determine the structure of the $\mathbf t-$module on $\Ext^1_{\tau}(\phi, \psi)$. 	
	From  the Lemma \ref{lemma_iso_Ext} we know that 
	$$\Ext^1_{\tau}(\phi, \psi)\cong \Big\{c_0+c_1\tau +\cdots + c_{{\rk\phi}-1}\tau^{{\rk\phi}-1}\mid c_i\in K \Big\}$$ as an $\F_q-$vector space.
	In order to equip $\Ext^1_{\tau}(\phi, \psi)$ with the $\mathbf t$-module structure we will transfer the structure of the $\F_q[t]-$module form  $\Ext^1_{\tau}(\phi, \psi)=\Der(\phi, \psi) /\Derin(\phi, \psi)$ to the space $K\{\tau\}_{<\rk\phi}$ via the isomorphism from Lemma \ref{lemma_iso_Ext}. Since each element of $K\{\tau\}_{<{\mathrm{rk}}\phi}$ is of the form $\sum\limits_{i=0}^{\rk\phi-1}c_i\tau^i$, it is sufficient to determine the value of multiplication by $t$ on the generators $c_i\tau^i$ for $i=0,1,\dots, \rk\phi-1$, where $c_i\in K$. 
	
	Next we will choose the coordinate system $e_i=\tau^i$ for $i=0,1,2,\dots, \rk\phi-1$ in $K\{\tau\}_{<\rk\phi}$ and see that the matrix $\Pi_t$ of the multiplication map $t*(-):K\{\tau\}_{<\rk\phi}\lra K\{\tau\}_{<\rk\phi}$ gives  rise to the $\mathbf t$-module structure on $\Ext^1_{\tau}(\phi, \psi).$ This structure is given by  \eqref{macierz_dla_exta}. 
	
	Assume that $\phi_t=\theta+\sum\limits_{i=1}^na_i\tau^i$, $\psi_t=\theta+\sum\limits_{j=1}^mb_j\tau^j$ and $r:=n-m>0$.
	Then from the formula \eqref{modstr} we see that 
	$$t*(c_i\tau^i)=\psi_t\cdot c_i\tau^i\quad \textnormal{for}\quad c_i\in K\quad \textnormal{and}\quad i=0,1,2,\dots, \rk\phi-1.$$   
	Suppose that $i\in\{0,1,\dots, r-1\}.$ Then 
		\begin{align*}
		t*(c_{i}\tau^{i})&=\Big(\theta+\sum\limits_{j=1}^mb_j\tau^j\Big) c_{i}\tau^{i}
		= c_{i}\theta\tau^{i}+\sum\limits_{j=1}^m b_j\up{c_{i}}j\tau^{j+i}\in K\{\tau\}_{<\rk\phi}
	\end{align*}	
	Hence 
	\begin{align*}
		t*(c_{i}\tau^{i})&=\Big[\nadwzorem{0,\dots,0}{i-1}, \theta, b_1\up{c_{i}}1, b_2\up{c_{i}}2,\dots, b_m\up{c_{i}}m,0,\dots 0 \Big]=\\
		&=\Big[\podwzorem{0,\dots,0}{i-1}, \theta, b_1\tau, b_2\tau^2,\dots, b_m\tau^m,0,\dots 0 \Big]_{|_{c_i}}=
	\end{align*}
Therefore the first $r$ column of the matrix $\Pi_t$ satisfy our claim. 

 Consider the case   $i\in\{r,r+1,\dots, \rk\phi-1\}$. In this situation the element
$\psi_t\cdot c_i\tau^i$ has $\tau-$degree bigger than $\rk\phi-1$, so like in the proof of the Lemma \ref{lemma_iso_Ext} we    reduce the monomial with the biggest $\tau-$degree by the generator $\delta^{(c\tau^k)}\in\Derin(\phi,\psi)$ and  continue this procedure until we get the reduced polynomial belonging to $K\{\tau\}_{<\rk\phi}$.

Then we perform downward induction from $n+i-r$ to $n$ and see that at each step after the reduction we obtain polynomials satisfying the following properties:
\begin{itemize}
	\item[(i)] the  term at $\tau^0$ of the reduced polynomial is equal to zero,
	\item[(ii)] each coefficient at $\tau^l$ of the reduced polynomial can be written as the evaluation at $c_i$ of a skew polynomial $w_l(\tau),$
	\item[(iii)] if $l\neq i$, then the skew polynomials $w_l(\tau)$ from $(ii)$ have no free term,
	\item[(iv)] the skew polynomial $w_i(\tau)$ has a free term equal to $\theta.$
\end{itemize}      
From $(i)$ we see that the matrix $\Pi_t$ is of the form as claimed. On the other hand  $(ii)$ implies that after the reduction procedure is completed we can present the coefficients in the chosen coordinate system $e_i=\tau^i$, as the values of the skew polynomials in $c_i$.  Write the matrix $\Pi_t$ in the following form
$$\Pi_t=(\theta \cdot I+N)\tau^0+\sum\limits_{i=1}^{\textnormal{finite} }A_i\tau^i,\quad\textnormal{where} \quad I,N,A_i\in {\mathrm{Mat}}_{\rk\phi}(K).$$
The condition  $(iii)$ implies that the matrix $N=0$ and the condition $(iv)$ implies that $I$ is identity matrix. Therefore $\Pi_t$ yields the $\mathbf t$-module structure on $\Ext^1_{\tau}(\phi,\psi)$ \\
\textbf{Induction:} At the start of the induction we consider the following polynomial:
	\begin{equation}\label{eq:multiplication_t__on_generator}
			t*(c_{i}\tau^{i})
		= c_{i}\theta\tau^{i}+\sum\limits_{j=1}^m b_j\up{c_{i}}j\tau^{j+i},
	\end{equation}
that satisfies conditions $(i)-(iv)$ in an obvious way. 

Suppose we have made $k$ reductions that satisfy conditions $(i)-(iv)$. Therefore the value $t*(c_i\tau^i)$ after reductions can be written in the following form:
\begin{align*}
	w_{i-r-k+1}(c_i)\tau^{i-r-k+1}+ w_{i-r-k+2}(c_i)\tau^{i-r-k+2}+\cdots+w_{m+i-k}(c_i)\tau^{m+i-k},
\end{align*}  
where the polynomials $w_l(\tau)$ satisfy conditions $(i)-(iv)$.
We  reduce the term  $w_{m+i-k}(c_i)\tau^{m+i-k}$ by the  generator
$\delta^{(c\tau^{i-r-k})}$ where $c=\dfrac{w_{m+i-k}(c_i)}{\up{a_n}{i-r-k}}$. 
Because all non-zero terms of $\delta^{(c\tau^{i-r-k})}$ have $\tau-$degree $\geq 1$, then $(i)$ is obvious. Recall that from
(\ref{iso1}) the generator $\delta^{(c\tau^{i-r-k})}$ can be rewritten in the following form:
\begin{align*}
		\delta^{(c\tau^{i-r-k})}(t)&
	= \sum\limits_{j=i-r-k}^{m+i-k-1}p_j(c_i)\tau^{j}+  \podwzorem{c\up{a_n}{i-r-k}}{=w_{m+i-k}(c_i)}\tau^{m+i-k},
\end{align*}
where the polynomials $p_l(\tau)\in K\{\tau\}$ satisfy the conditions $(i)-(iii)$.
Therefore after the reduction by the generator $	\delta^{(c\tau^{i-r-k})}$
we obtain the following form of  $t*(c_i\tau^i):$
\begin{align*}
	-p_{i-r-k}(c_i)\tau^{i-r-k}+ \sum\limits_{j=i-r-k+1}^{m+i-k-1}\big(w_j(c_i) -p_j(c_i)\big)\tau^{j}.
\end{align*}
We put $\widehat{w_{i-r-k}}(\tau)=-p_{i-r-k}(\tau)$, $\widehat{w_{j}}(\tau)=w_j(\tau) -p_j(\tau)$ for $j=i-r-k+1,\dots, m+i-k-1$. Because the polynomials $p_l(\tau)$ and $w_l(\tau)$ satisfy the conditions $(i)-(iii)$ and $w_i(\tau)$ satisfy $(iv)$, then the polynomials $\widehat{w_{l}}(\tau)$ also satisfy these four conditions. This completes the induction step, and thus we proved that $\Ext^1_{\tau}(\phi, \psi)$ has a $\mathbf t-$module structure. 
\end{proof}

For $A\in {\mathrm{Mat}}_{n_1\times n_2}(K\{\tau\})$ let $dA\in {\mathrm{Mat}}_{n_1\times n_2}(K)$    be the constant term of  
$A$ viewed as a polynomial in $\tau.$ 
For $\mathbf t-$ modules $\Phi$ and $\Psi$ let
\begin{equation}\label{der0}
\Der_0(\Phi,\Psi)=\{ \delta \in \Der(\Phi,\Psi)\,\,\mid \,\, d\delta (t)=0\}.
\end{equation}
Following \cite{pr} define $\Ext^1_{0,\tau}(\Phi, \Psi):=\Der_0(\Phi,\Psi)/ \Der_0(\Phi,\Psi)\cap \Der_{in}(\Phi,\Psi).$ 
We have the following:
\begin{lem}\label{lem:ext0}
		Let $\phi$ and $\psi$ be the  Drinfeld modules, such that $r=\rk\phi-\rk\psi>0$. Then 
		\begin{itemize}
			\item[$(i)$] there exists an isomorphism of  $\F_q[t]-$modules between $\Ext^1_{0,\tau}(\phi, \psi)$   and $K\{\tau\}_{\langle 1, \rk\phi )}=\Big\{ \sum\limits_{i=1}^{\rk\phi-1} c_{i}\tau^{i}\mid c_i\in K \Big\},$
			\item[$(ii)$] $K\{\tau\}_{\langle 1, \rk\phi )}$ is an $\F_q[t]-$submodule of $K\{\tau\}_{<\rk\phi},$
			\item[$(iii)$] $\Ext^1_{0,\tau}(\phi, \psi)$ is an $\F_q[t]-$submodule of $\Ext^1_{\tau}(\phi, \psi),$
			\item[$(iv)$] $\Ext^1_{0,\tau}(\phi, \psi)$ has a natural structure of a $\mathbf t$-module.		
		\end{itemize}		 
\end{lem}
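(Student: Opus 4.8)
The plan is to deduce all four parts essentially from the explicit description of the $\mathbf t$-module structure on $\Ext^1_{\tau}(\phi,\psi)$ obtained in Proposition \ref{prop:ext_as_t_module}, together with the computation of inner biderivations carried out in \eqref{iso1}. For part $(i)$, I would first observe that, under the identification $\delta\mapsto\delta(t)=w(\tau)$ from Lemma \ref{lemma_iso_Ext}, the condition $d\delta(t)=0$ corresponds exactly to the vanishing of the constant term of $w$, so $\Der_0(\phi,\psi)$ maps onto $K\{\tau\}_{\geq 1}$ inside $K\{\tau\}$. Then I would re-run the reduction argument of Lemma \ref{lemma_iso_Ext}: since every non-zero inner biderivation $\delta^{(c\tau^k)}(t)$ has $\tau$-degree $n+k\geq n$ (as shown right after \eqref{iso1}), reducing a biderivation $w\in\Der_0(\phi,\psi)$ modulo inner biderivations never reintroduces a constant term (one only ever subtracts $\delta^{(c\tau^k)}$ for $k\geq 1$ when $w$ has no constant term, and such generators have no constant term by inspection of \eqref{iso1}). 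Hence every class in $\Ext^1_{0,\tau}(\phi,\psi)$ has a unique representative in $K\{\tau\}_{\langle 1,\rk\phi)}$, and the $\F_q$-linear isomorphism $\Ext^1_{0,\tau}(\phi,\psi)\cong K\{\tau\}_{\langle 1,\rk\phi)}$ follows; that it is $\F_q[t]$-linear is immediate from the module structure \eqref{modstr}, which is just left multiplication by $\psi_a$ followed by reduction.

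For part $(ii)$, I would argue that $K\{\tau\}_{\langle 1,\rk\phi)}$ is stable under the $t$-action on $K\{\tau\}_{<\rk\phi}$. This is where the explicit matrix \eqref{macierz_dla_exta} does the work: the claim from Proposition \ref{prop:ext_as_t_module}, condition $(i)$ in its proof, is precisely that after reducing $t*(c_i\tau^i)$ the coefficient at $\tau^0$ is zero for every $i\geq 1$ — equivalently, the first row of $\Pi_t$ below the $(1,1)$-entry vanishes, i.e. $\Pi_t$ has the block-lower-triangular shape displayed in \eqref{macierz_dla_exta}. Therefore multiplication by $t$ sends the span of $\tau^1,\dots,\tau^{\rk\phi-1}$ into itself, and since $\F_q$ acts by scalars, $K\{\tau\}_{\langle 1,\rk\phi)}$ is an $\F_q[t]$-submodule. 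Part $(iii)$ is then formal: transport the submodule $K\{\tau\}_{\langle 1,\rk\phi)}\subseteq K\{\tau\}_{<\rk\phi}$ back through the isomorphisms of Lemma \ref{lemma_iso_Ext} and part $(i)$, which are compatible by construction (both are induced by $\delta\mapsto\delta(t)$), to get $\Ext^1_{0,\tau}(\phi,\psi)\subseteq\Ext^1_{\tau}(\phi,\psi)$ as $\F_q[t]$-modules.

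For part $(iv)$, I would exhibit the restriction of $\Pi_t$ to the invariant subspace $K\{\tau\}_{\langle 1,\rk\phi)}$ as a $\mathbf t$-module map $\Pi^0:\F_q[t]\to\mathrm{Mat}_{\rk\phi-1}(K\{\tau\})$, namely the lower-right block $\Pi^0_t$ appearing in \eqref{macierz_dla_exta}. It remains to check that $\Pi^0_t$ satisfies the Anderson conditions of Definition \ref{tmod}, i.e. that its constant term is $\theta I_{\rk\phi-1}+N'$ with $N'$ nilpotent. But this is inherited from $\Pi_t$: the proof of Proposition \ref{prop:ext_as_t_module} established via conditions $(iii)$ and $(iv)$ that the constant term of $\Pi_t$ is exactly $\theta I_{\rk\phi}$ (with zero nilpotent part), so the constant term of the sub-block $\Pi^0_t$ is $\theta I_{\rk\phi-1}$, and $N'=0$ works. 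Hence $\Pi^0$ defines a $\mathbf t$-module structure on $\Ext^1_{0,\tau}(\phi,\psi)$. The main obstacle, such as it is, is bookkeeping: making sure the reduction process in part $(i)$ genuinely stays within $K\{\tau\}_{\langle 1,\rk\phi)}$ rather than requiring a constant-term generator $\delta^{(c)}$ at the last step — but \eqref{iso1} shows $\delta^{(c)}(t)$ itself has no constant term (its lowest term is $c(\theta^{(0)}-\theta)\tau^0=0$, so in fact its lowest non-zero term is in degree $\geq 1$), so no constant term is ever created and the argument closes cleanly. I should double-check the degenerate possibility $\rk\phi-\rk\psi$ small or $\rk\phi=1$, but the hypothesis $r=\rk\phi-\rk\psi>0$ forces $\rk\phi\geq 2$, so $K\{\tau\}_{\langle 1,\rk\phi)}$ is non-trivial (or is the zero module exactly when $\rk\phi=1$, impossible here), and the statement is vacuous-free.
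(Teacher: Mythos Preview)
Your proposal is correct and follows essentially the same approach as the paper: both arguments re-run the reduction from Lemma \ref{lemma_iso_Ext} using that every $\delta^{(c\tau^k)}$ lies in $\Der_0(\phi,\psi)$, then read off (ii)--(iv) from the block shape \eqref{macierz_dla_exta} of $\Pi_t$ established in Proposition \ref{prop:ext_as_t_module}. Your initial claim that ``one only ever subtracts $\delta^{(c\tau^k)}$ for $k\geq 1$'' is not quite right (the last reduction step may use $k=0$), but you catch and repair this yourself at the end by observing that $\delta^{(c)}(t)$ also has vanishing constant term, so the argument closes as in the paper.
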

\begin{proof}
	The proof of part $(i)$ is similar to the proof of Lemma \ref{lemma_iso_Ext}. Recall, that  $\Ext^1_{0,\tau}(\phi, \psi)=\mathrm{Der}_0(\phi, \psi)/ \Derin(\phi, \psi)\cap \mathrm{Der}_0(\phi, \psi)$ and $ \delta^{(c\tau^k)}\in\mathrm{Der}_0(\phi, \psi)$ for all $k\geq 0$. Because $\deg_{\tau}(\delta^{c\tau^k}_t)=\rk\phi+k$  each biderivation $\delta\in \mathrm{Der}_0(\phi, \psi)$ can be reduced as in the proof of the Lemma  \ref{lemma_iso_Ext}. This implies \nolinebreak$(i)$. \\
 The vector space $K\{\tau\}_{\langle 1, \rk\phi )}$ has the $\F_q[t]-$module structure coming from the same formulas $t*\big(c_k\tau^k\big)$ for $k=1,2,\dots, n-1$ as in the case  $K\{\tau\}_{<\rk\phi}$. Therefore the standard inclusion $K\{\tau\}_{\langle 1, \rk\phi )}\hookrightarrow K\{\tau\}_{<\rk\phi}$ is the inclusion of $\F_q[t]-$modules. 
 This establishes $(ii).$\\
 $(iii)$ follows from $(i)$ and $(ii)$.\\
	Part $(iv)$ follows from the proof of the Proposition \ref{prop:ext_as_t_module}, where the matrix $\Pi^0_t$ from \eqref{macierz_dla_exta} induces the homomorphism  
	$\Pi^0:\F_q[t]\lra {\mathrm{Mat}}_{n-1}(K\{\tau \})$ giving rise to the $\mathbf t$-module structure on $\Ext^1_{0,\tau}(\phi, \psi)$.  
\end{proof}

The following theorem is a specialization of \cite[Lemma 2.2]{pr}.  However, we are able to describe the maps in an explicit way.
\begin{thm}\label{thm:short_sequence_t_modules}
		Let $\phi$ and $\psi$ be  Drinfeld modules, such that $r=\rk\phi-\rk\psi>0$. Then there is a short exact sequence of \tm modules
		\begin{equation}\label{ess}
		0\lra \Ext^1_{0,\tau}(\phi, \psi)\lra \Ext^1_{\tau}(\phi, \psi) \lra \G_a\lra 0.
		\end{equation}
\end{thm}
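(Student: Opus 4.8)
The plan is to write down both nontrivial arrows of \eqref{ess} explicitly, using the identifications already established; essentially all the content is contained in Lemma \ref{lem:ext0} and in the proof of Proposition \ref{prop:ext_as_t_module}, so the proof amounts to organizing this.

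First I would fix the isomorphisms $\Ext^1_{\tau}(\phi,\psi)\cong K\{\tau\}_{<\rk\phi}$ and $\Ext^1_{0,\tau}(\phi,\psi)\cong K\{\tau\}_{\langle 1,\rk\phi)}$ of Lemmas \ref{lemma_iso_Ext} and \ref{lem:ext0}, together with the coordinates $e_i=\tau^i$ of Proposition \ref{prop:ext_as_t_module}. Under these, $K\{\tau\}_{\langle 1,\rk\phi)}$ is the subspace $\{c_0=0\}$, and by Lemma \ref{lem:ext0}$(ii)$--$(iv)$ it is an $\F_q[t]$-submodule of $K\{\tau\}_{<\rk\phi}$ whose $\mathbf t$-module structure is the one given by the lower-right block $\Pi^0_t$ of $\Pi_t$ in \eqref{macierz_dla_exta}. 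So the first arrow of \eqref{ess} is the inclusion $\Ext^1_{0,\tau}(\phi,\psi)\hookrightarrow\Ext^1_{\tau}(\phi,\psi)$, which is an injective morphism of $\mathbf t$-modules (indeed a closed immersion of the underlying algebraic groups, being given by the full-rank matrix $\left[\begin{smallmatrix}0\\ I_{n-1}\end{smallmatrix}\right]$).

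For the second arrow I would take the constant-term map $\pi\colon\Ext^1_{\tau}(\phi,\psi)\to\G_a$, $\delta\mapsto d\delta(t)$ — in the coordinates $e_i$ this is just $w\mapsto c_0$, i.e. the morphism of algebraic groups given by the row $[1,0,\dots,0]\in\mathrm{Mat}_{1\times\rk\phi}(K)$ — where $\G_a$ is regarded as the $\mathbf t$-module with $\Phi_t=\theta$. It is visibly surjective, and its kernel is, by the definition \eqref{der0} of $\Der_0$, the image of $\Der_0(\phi,\psi)$ in $\Ext^1_{\tau}(\phi,\psi)$; since $\Derin(\phi,\psi)\subseteq\Der_0(\phi,\psi)$ (each generator $\delta^{(c\tau^k)}(t)$ has no $\tau^0$-term, cf. \eqref{iso1}: for $k\ge1$ its lowest term is $\tau^k$, and for $k=0$ the $\tau^0$-coefficient is $c(\up\theta0-\theta)=0$), this image is exactly $\Ext^1_{0,\tau}(\phi,\psi)\cong K\{\tau\}_{\langle 1,\rk\phi)}$. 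Thus on underlying $K$-vector spaces the sequence $0\to K\{\tau\}_{\langle 1,\rk\phi)}\to K\{\tau\}_{<\rk\phi}\xrightarrow{\,d\,}K\to 0$ is exact.

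It remains to check that $\pi$ intertwines the $t$-actions, i.e. $\pi(t*w)=\theta\cdot\pi(w)$, equivalently $[1,0,\dots,0]\,\Pi_t=\theta\,[1,0,\dots,0]$. This is exactly the assertion that the top row of $\Pi_t$ in \eqref{macierz_dla_exta} is $[\theta,0,\dots,0]$, which the proof of Proposition \ref{prop:ext_as_t_module} already yields: for $i=0$ one has $t*(c_0)=\psi_t c_0=\theta c_0+\sum_{j=1}^{m}b_j\up{c_0}{j}\tau^j$, so the $(0,0)$-entry is $\theta$; for $1\le i<r$ the polynomial $t*(c_i\tau^i)=\theta c_i\tau^i+\sum_j b_j\up{c_i}{j}\tau^{j+i}$ already has zero $\tau^0$-term; and for $i\ge r$ the reduced representative of $t*(c_i\tau^i)$ has zero constant term by property $(i)$ of that proof, since the reductions only subtract generators $\delta^{(c\tau^k)}$, none of which has a $\tau^0$-term. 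Hence $\pi$ is a morphism of $\mathbf t$-modules, and \eqref{ess} is exact in the ($\F_q[t]$-linear additive) category of $\mathbf t$-modules — exactness meaning that the first arrow is a kernel of $\pi$ and $\pi$ is a quotient map of the underlying algebraic groups, both of which follow from the displayed vector-space sequence together with the compatibility with the $t$-action. I do not anticipate a genuine obstacle here: the only delicate point is keeping track of the $\tau^0$-coefficients through the reduction algorithm, and this has already been isolated as conditions $(i)$--$(iv)$ in Proposition \ref{prop:ext_as_t_module}.
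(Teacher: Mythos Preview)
Your proof is correct and follows essentially the same approach as the paper: both use the block form \eqref{macierz_dla_exta} of $\Pi_t$, realize the inclusion via the matrix $\left[\begin{smallmatrix}0\\ I_{\rk\phi-1}\end{smallmatrix}\right]$, and identify the cokernel with the trivial $\mathbf t$-module $\G_a$. You have simply spelled out in detail what the paper leaves as ``easy to check,'' namely that the top row of $\Pi_t$ is $[\theta,0,\dots,0]$ (via properties $(i)$--$(iv)$ of Proposition \ref{prop:ext_as_t_module}) so that the projection $[1,0,\dots,0]$ is a morphism of $\mathbf t$-modules.
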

\begin{proof}
	Let $\Pi^0_t$ and $\Pi_t$ be the matrices giving the $\mathbf t$-module structures for $\Ext^1_{0,\tau}(\phi, \psi)$ and $\Ext^1_{\tau}(\phi, \psi)$, respectively. Then the matrix 
	$$\left[\begin{array}{c}
		0\dots 0\\ \hline
		\\
		I_{\rk\phi-1}\\	\\	
	\end{array}\right]$$   induces the inclusion of $\mathbf t$-modules $i:\Ext^1_{0,\tau}(\phi, \psi)\hookrightarrow\Ext^1_{\tau}(\phi, \psi)$. It is easy to check that $\coker(i)$ is the trivial $\mathbf t$-module $\G_a$. 
\end{proof}

\begin{rem}
Notice that the biderivation corresponding to the extension (\ref{ess}) comes from the first column of  $\Pi_t.$
\end{rem}

\section{$\Ext^1_{\tau}\Big(\prod_{i=1}^n\phi_i, \prod_{j=1}^m\psi_j\Big)$}\label{gen1}
We have the following generalization of  Proposition \ref{prop:ext_as_t_module}:
.
\begin{thm}\label{thm:product_ext}
		Assume that $\phi_i$, $\psi_j$ for $i=1,\dots, n$, $j=1,\dots, m$ are Drinfeld modules, such that  $\rk \phi_i>\rk\psi_j$ for all $i,j$. Then 
		$\Ext^1_{\tau}\Big(\prod_{i=1}^n\phi_i, \prod_{j=1}^m\psi_j\Big)$ has a natural structure of a $\mathbf t$-module.
	\end{thm}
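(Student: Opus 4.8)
The plan is to reduce the product case to the single Drinfeld module case (Proposition \ref{prop:ext_as_t_module}) using the additivity of the $\Ext^1_{\tau}$ bifunctor in each variable. First I would recall the general fact that for an additive bifunctor one has natural isomorphisms
\begin{equation}\label{eq:ext_product_decomp}
\Ext^1_{\tau}\Big(\prod_{i=1}^n\phi_i, \prod_{j=1}^m\psi_j\Big)\cong \bigoplus_{i=1}^n\bigoplus_{j=1}^m \Ext^1_{\tau}(\phi_i,\psi_j),
\end{equation}
compatible with the $\F_q[t]$-module structures. For a finite biproduct this is elementary: the canonical inclusions and projections $\prod_j\psi_j\rightleftarrows \psi_j$ and $\prod_i\phi_i\rightleftarrows\phi_i$ are morphisms of \tm modules satisfying the usual biproduct identities, and applying the (bi)functoriality of $\Ext^1_{\tau}$ together with the Baer sum yields the decomposition \eqref{eq:ext_product_decomp}. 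One must check that this splitting respects the $\F_q[t]$-action defined in \eqref{modstr}; this is immediate because the action of $a\in\F_q[t]$ is induced by post-composition with $\psi_a$ (more precisely $(\prod_j\psi_j)_a=\diag(\psi_{1,a},\dots,\psi_{m,a})$), which commutes with the projections $\prod_j\psi_j\to\psi_j$.

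Next I would invoke Proposition \ref{prop:ext_as_t_module}: since $\rk\phi_i>\rk\psi_j$ for every pair $(i,j)$, each summand $\Ext^1_{\tau}(\phi_i,\psi_j)$ carries a natural \tm module structure, given by an explicit matrix $\Pi^{(i,j)}_t\in\mathrm{Mat}_{\rk\phi_i}(K\{\tau\})$ of the shape \eqref{macierz_dla_exta}. The direct sum of these \tm modules is again a \tm module: concretely, the image of $t$ under $\bigoplus_{i,j}\Pi^{(i,j)}$ is the block-diagonal matrix $\diag\big(\Pi^{(i,j)}_t\big)_{i,j}$ of size $\big(\sum_{i,j}\rk\phi_i\big)$, and since each block is of the form $(\theta I+\text{nilpotent})\tau^0+(\text{higher }\tau\text{-terms})$, so is the block-diagonal matrix; hence Definition \ref{tmod} is satisfied. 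Finally I would verify that the $\F_q[t]$-module structure on $\Ext^1_{\tau}\big(\prod_i\phi_i,\prod_j\psi_j\big)$ coming from \eqref{modstr} agrees, under \eqref{eq:ext_product_decomp}, with the direct sum of the \tm module structures on the individual $\Ext^1_{\tau}(\phi_i,\psi_j)$; this is again a consequence of the fact that the $t$-action is block-diagonal with respect to the decomposition of $\prod_j\psi_j$.

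The main obstacle is not the \tm module axiom check (which is a formality once the block-diagonal picture is in place) but rather establishing the naturality of \eqref{eq:ext_product_decomp} as an isomorphism of $\F_q[t]$-modules with enough care that the transported \tm module structure is literally the block-diagonal one, rather than merely isomorphic to it. In particular one should be careful that the biderivation description is compatible with products: a biderivation $\delta\in\Der\big(\prod_i\phi_i,\prod_j\psi_j\big)$ is an $(m\times n)$-block matrix of biderivations $\delta_{ji}\in\Der(\phi_i,\psi_j)$, the inner biderivations are exactly the block matrices of inner biderivations (this uses that $\Hom_\tau$ also decomposes over the product), and so $\Der/\Derin$ decomposes as the corresponding direct sum of cokernels — which by \eqref{iso_ext} is precisely \eqref{eq:ext_product_decomp}. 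Since the $\F_q[t]$-action \eqref{modstr} is entrywise left multiplication by $\psi_a$, which on the block level is $\delta_{ji}\mapsto\psi_{j,a}\delta_{ji}$, the action preserves each block, and the identification with $\bigoplus_{i,j}\Ext^1_{\tau}(\phi_i,\psi_j)$ is $\F_q[t]$-linear. Applying Proposition \ref{prop:ext_as_t_module} to each block then completes the proof.
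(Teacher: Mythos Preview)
Your proposal is correct and follows essentially the same route as the paper: decompose $\Ext^1_\tau$ of the products into the direct sum $\bigoplus_{i,j}\Ext^1_\tau(\phi_i,\psi_j)$ at the level of biderivations modulo inner biderivations, check that the $\F_q[t]$-action is block-diagonal, and then invoke Proposition~\ref{prop:ext_as_t_module} on each block to obtain a block-diagonal $\Pi_t$. The paper carries this out by an explicit computation of $\delta^{(c\tau^r E_{j\times i})}$ to see that inner biderivations decompose blockwise, whereas you phrase the same step via additivity of the bifunctor; the content is identical.
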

	\begin{proof}
		Let $\phi_i(t)=\theta+\sum\limits_{k=1}^{n_i}a_{i,k}\tau^k$ for $i=1,\dots, n$ and
		$\psi_j(t)=\theta+\sum\limits_{k=1}^{m_j}b_{j,k}\tau^k$ for $j=1,\dots, m$. We put 
		$$N=\max\{n_i\mid i=1,\dots, n\}\quad \textnormal{and}\quad M=\max\{m_j\mid j=1,\dots, m\}.$$
		 Denote by $E_{j\times i}$ the $m\times n$  the elementary matrix with the only one non-zero element equal to $1$ at $j\times i$ place. The elements $\delta^{(c\tau^rE_{j\times i})}$ generate $\Derin\Big(\prod_{i=1}^n\phi_i, \prod_{j=1}^m\psi_j\Big)$, where
		\begin{align*}
			\delta^{(c\tau^rE_{j\times i})}(t)&=E_{j\times i}\cdot\podwzorem{\Bigg(c\Big(\up{\theta}r-\theta\Big)\tau^r+\sum_{l=1}^Nc\up{a_{i,l}}r\tau^{r+l}-\sum_{l=1}^M \up cl b_{j,l}\tau^{l+r}\Bigg)}{\in \Derin(\phi_i, \psi_j)}
		\end{align*} 
	Then $$\Derin\Big(\prod_{i=1}^n\phi_i, \prod_{j=1}^m\psi_j\Big)=
	\left[\begin{array}{ccc}
		\Derin(\phi_1,\psi_1)&\cdots&\Derin(\phi_n,\psi_1)\\
		\Derin(\phi_1,\psi_2)&\cdots&\Derin(\phi_n,\psi_2)\\
		\vdots& \ddots&\vdots\\
		\Derin(\phi_1,\psi_m)&\cdots&\Derin(\phi_n,\psi_m)\\
	\end{array}\right],$$
	and therefore
	\begin{equation}\label{eq:extension_isomorphism_product}
		\Ext^1_{\tau}\Big(\prod_{i=1}^n\phi_i, \prod_{j=1}^m\psi_j\Big)\cong
		\left[\begin{array}{ccc}
			\Ext^1_{\tau}(\phi_1,\psi_1)&\cdots&\Ext^1_{\tau}(\phi_n,\psi_1)\\
			\Ext^1_{\tau}(\phi_1,\psi_2)&\cdots&\Ext^1_{\tau}(\phi_n,\psi_2)\\
			\vdots& \ddots&\vdots\\
			\Ext^1_{\tau}(\phi_1,\psi_m)&\cdots&\Ext^1_{\tau}(\phi_n,\psi_m)\\
		\end{array}\right],
	\end{equation}
	as $\F_q-$linear spaces. \\
	Moreover, for $a\in\F_q[t]$ and a biderivation $\delta\in \Der\Big(\prod_{i=1}^n\phi_i, \prod_{j=1}^m\psi_j\Big)$ we have
	\begin{align*}
		a*\delta&=\Bigg(\prod_{j=1}^m\psi_j\Bigg) (a)\cdot \delta =\\
		&= \left[\begin{array}{cccc}
			\psi_1(a)&0&\cdots&0 \\
			0&\psi_2(a)&\cdots& 0\\
			\vdots& \vdots& \ddots& \vdots\\
			0& 0& \cdots & \psi_{m}(a)
		\end{array}\right]\cdot
	\left[\begin{array}{ccc}
		\delta_{1\times 1}(\tau)&\cdots& \delta_{1\times n}(\tau) \\
		\delta_{2\times 1}(\tau)&\cdots& \delta_{2\times n}(\tau)\\
		\vdots&  \ddots& \vdots\\
		\delta_{m\times 1}(\tau) & \cdots & \delta_{m\times n}(\tau)
	\end{array}\right]=\\
&= \left[\begin{array}{cccc}
	\psi_1(a)\cdot\delta_{1\times 1}(\tau)&\psi_1(a)\cdot\delta_{1\times 2}(\tau)&\cdots&\psi_1(a)\cdot\delta_{1\times n}(\tau) \\
	\psi_2(a)\cdot\delta_{2\times 1}(\tau)&\psi_2(a)\cdot\delta_{2\times 2}(\tau)&\cdots& \psi_2(a)\cdot\delta_{2\times n}(\tau)\\
	\vdots& \vdots& \ddots& \vdots\\
	\psi_{m}(a)\cdot\delta_{m\times 1}(\tau)& \psi_{m}(a)\cdot\delta_{m\times 2}(\tau)& \cdots & \psi_{m}(a)\cdot\delta_{m\times n}(\tau)
\end{array}\right]=\\
&= \left[\begin{array}{cccc}
	a*\delta_{1\times 1}(\tau)&a*\delta_{1\times 2}(\tau)&\cdots&a*\delta_{1\times n}(\tau) \\
	a*\delta_{2\times 1}(\tau)&a*\delta_{2\times 2}(\tau)&\cdots& a*\delta_{2\times n}(\tau)\\
	\vdots& \vdots& \ddots& \vdots\\
	a*\delta_{m\times 1}(\tau)& a*\delta_{m\times 2}(\tau)& \cdots & a*\delta_{m\times n}(\tau)
\end{array}\right].
	\end{align*}
Hence the isomorphism (\ref{eq:extension_isomorphism_product}) preserves the structure of $\F_q[t]-$modules. 
	Because $\rk\phi_i>\rk \psi_j$, then $\Ext^1_{\tau}(\phi_i,\psi_j)$ has a structure of  a $\mathbf t$-module. Let $\Pi_{i\times j}:\F_q[t]\lra {\mathrm{Mat}}_{n_i}(K)\{\tau\}$ be the map, which gives the structure of the $\mathbf t$-module on $\Ext^1_{\tau}(\phi_i,\psi_j)$ in the base $\tau^l, \,\, l=0,1,\dots, n_i-1$, same as in the proof of the Proposition \ref{prop:ext_as_t_module}.\\
	
	Let $e_{i\times j}^l$ denote the element $\tau^l\cdot E_{i\times j}$. 
	As $\F_q[t]-$modules
	$$\Ext^1_{\tau}\Big(\prod_{i=1}^n\phi_i, \prod_{j=1}^m\psi_j\Big)\cong\left[\begin{array}{ccc}
		\Ext^1_{\tau}(\phi_1,\psi_1)&\cdots&\Ext^1_{\tau}(\phi_n,\psi_1)\\
		\Ext^1_{\tau}(\phi_1,\psi_2)&\cdots&\Ext^1_{\tau}(\phi_n,\psi_2)\\
		\vdots& \ddots&\vdots\\
		\Ext^1_{\tau}(\phi_1,\psi_m)&\cdots&\Ext^1_{\tau}(\phi_n,\psi_m)\\
	\end{array}\right],$$
	we choose the following coordinate system: 
	\begin{align*}
		&\Big(e^l_{1\times 1}\Big)_{l=0}^{n_1-1}, \Big(e^l_{2\times 1}\Big)_{l=0}^{n_1-1},\dots,\Big(e^l_{m\times 1}\Big)_{l=0}^{n_1-1},\\
		&\Big(e^l_{1\times 2}\Big)_{l=0}^{n_2-1}, \Big(e^l_{2\times 2}\Big)_{l=0}^{n_2-1},\dots,\Big(e^l_{m\times 2}\Big)_{l=0}^{n_2-1},\\
		&\vdots\\
		&\Big(e^l_{1\times n}\Big)_{l=0}^{n_1-1}, \Big(e^l_{2\times n}\Big)_{l=0}^{n_1-1},\dots,\Big(e^l_{m\times n}\Big)_{l=0}^{n_n-1}.
	\end{align*}
Then the $\mathbf t-$module structure on $\Ext^1_{\tau}\Big(\prod_{i=1}^n\phi_i, \prod_{j=1}^m\psi_j\Big)$ is given by the map 
$\Pi:\F_q[t]\lra {\mathrm{Mat}}_{m\cdot{\sum}_{i=1}^nn_i}(K)\{\tau\}$, defined by 
\begin{align*}
	\Pi(t)=\left[\begin{array}{cccc}
		\Big[\Pi_{1\times j}(t)\Big]_{j=1}^m&0&\cdots&0 \\
		0& 	\Big[\Pi_{2\times j}(t)\Big]_{j=1}^m &\cdots& 0\\
		\vdots& \vdots& \ddots& \vdots\\
		0& 0& \cdots & 	\Big[\Pi_{n\times j}(t)\Big]_{j=1}^m
	\end{array}\right],
\end{align*}
where $\Big[\Pi_{i\times j}(t)\Big]_{j=1}^m$ is the diagonal matrix with the elements 
$\Pi_{i\times 1}(t)$, $\Pi_{i\times 2}(t)$, $\dots$, $\Pi_{i\times m}(t)$ on the diagonal. 
\end{proof}
	\begin{cor}\label{cor:product_ext_sequence}
	Assume that $\phi_i$, $\psi_j$ for $i=1,\dots, n$, $j=1,\dots, m$ are Drinfeld modules, such that  $\rk \phi_i>\rk\psi_j$ for all $i,j$. Then there is a short exact sequence of $\mathbf t$-modules 
	$$0\lra\Ext_{0,\tau}\Big(\prod_{i=1}^n\phi_i, \prod_{j=1}^m\psi_j\Big)\lra 
	\Ext^1_{\tau}\Big(\prod_{i=1}^n\phi_i, \prod_{j=1}^m\psi_j\Big) \lra \G_a^{n\cdot m}\lra 0. $$ 
	\end{cor}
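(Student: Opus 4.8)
The plan is to mimic the proof of Theorem \ref{thm:short_sequence_t_modules}, but carried out component by component over the block decomposition of $\Ext^1_{\tau}\big(\prod_i\phi_i,\prod_j\psi_j\big)$ established in Theorem \ref{thm:product_ext}. First I would recall from the proof of Theorem \ref{thm:product_ext} that, after choosing the coordinate system $\big(e^l_{i\times j}\big)$, the $\mathbf t$-module structure is block-diagonal with blocks $\Pi_{i\times j}(t)$, each of which is exactly the matrix $\Pi_t$ from Proposition \ref{prop:ext_as_t_module} for the pair $(\phi_i,\psi_j)$. In particular each block has the shape \eqref{macierz_dla_exta}: a first row $[\theta,0,\dots,0]$, a first column $[\theta,\delta_1,\dots,\delta_{n_i-1}]^t$, and the sub-block $\Pi^0_{i\times j}(t)$ governing $\Ext^1_{0,\tau}(\phi_i,\psi_j)$.

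Next I would define the candidate submodule. For each pair $(i,j)$ the matrix
$$\left[\begin{array}{c}
0\cdots 0\\ \hline
I_{n_i-1}
\end{array}\right]$$
induces, by Theorem \ref{thm:short_sequence_t_modules}, the inclusion of $\mathbf t$-modules $\Ext^1_{0,\tau}(\phi_i,\psi_j)\hookrightarrow\Ext^1_{\tau}(\phi_i,\psi_j)$ with cokernel $\G_a$. Assembling these maps block-diagonally over all $m\cdot\sum_i n_i$ coordinates gives a morphism of $\mathbf t$-modules
$$i:\Ext_{0,\tau}\Big(\prod_{i=1}^n\phi_i, \prod_{j=1}^m\psi_j\Big)\lra \Ext^1_{\tau}\Big(\prod_{i=1}^n\phi_i, \prod_{j=1}^m\psi_j\Big),$$
where the source is identified, exactly as in \eqref{eq:extension_isomorphism_product} but with $\Ext^1_{0,\tau}$ in place of $\Ext^1_{\tau}$ (this identification is the analogue, for $\Der_0$, of the block decomposition of $\Derin$ in the proof of Theorem \ref{thm:product_ext}, and follows from Lemma \ref{lem:ext0}(i) applied in each block). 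Because a block-diagonal matrix commutes with the block-diagonal $\Pi(t)$ precisely when each block does, and each block intertwines $\Pi^0_{i\times j}$ with $\Pi_{i\times j}$ by Theorem \ref{thm:short_sequence_t_modules}, the map $i$ is indeed a $\mathbf t$-module morphism; it is visibly injective.

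Finally I would compute the cokernel. Since the embedding omits exactly one coordinate (the $\tau^0$-coordinate) in each of the $n\cdot m$ blocks, and on each such coordinate the induced $t$-action is multiplication by $\theta$ (the $(1,1)$-entry of each $\Pi_{i\times j}(t)$, which carries no positive power of $\tau$), the cokernel is the trivial $\mathbf t$-module $\G_a^{n\cdot m}$, giving the asserted short exact sequence. The only point requiring care — and the main obstacle — is checking that the source of $i$ really is the "product" $\mathbf t$-module $\Ext_{0,\tau}\big(\prod_i\phi_i,\prod_j\psi_j\big)$ with its block-diagonal structure $\big[\Pi^0_{i\times j}\big]$, i.e. that forming $\Ext^1_{0,\tau}$ commutes with finite products in the same way $\Ext^1_{\tau}$ does; this is a routine repetition of the argument in Theorem \ref{thm:product_ext}, using that $\Der_0$ and $\Derin$ both decompose entrywise along the product, together with Lemma \ref{lem:ext0}.
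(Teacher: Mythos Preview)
Your proposal is correct and follows the approach the paper intends: the corollary is stated without proof precisely because it is the blockwise version of Theorem \ref{thm:short_sequence_t_modules}, obtained by combining the block-diagonal $\mathbf t$-module structure of Theorem \ref{thm:product_ext} with the single-block exact sequence (\ref{ess}). Your identification of the one subtlety --- that $\Ext^1_{0,\tau}$ of the product decomposes entrywise in the same way as $\Ext^1_{\tau}$ --- and your indication that it is handled by repeating the $\Derin$-decomposition argument for $\Der_0$ (together with Lemma \ref{lem:ext0}) is exactly right.
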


\section{$\Ext^1_{\tau}(\phi,\psi)$ for Drinfeld modules $\phi$ and $\psi$ with $\rk \phi \leq \rk \psi$}\label{general}
\subsection{case $\rk\phi=\rk\psi$.}

This case is difficult to handle. We can express the generators $\delta^{(c\tau^k)},$ where $c\in K, \, k=0,1\dots ,$ of the space $\Derin(\phi, \psi)$ in the following way:
\begin{align}\label{iso2}
	\begin{split}
		\delta^{(c\tau^k)}(t)=c\tau^k\phi_t-\psi_t c\tau^k= c\tau^k\Big(\theta+\sum\limits_{i=1}^na_i\tau^i \Big)- \Big( \theta+\sum\limits_{j=1}^nb_j\tau^j \Big)c\tau^k\\
		= c(\up\theta k-\theta)\tau^k+\sum\limits_{j=k+1}^{n+k}\Big(c\up{a_{j-k}}k-\up c{j-k} b_{j-k}\Big)\tau^{j}.
		\end{split}
	\end{align}
Notice that  we cannot claim that  the polynomial in the variable $\tau$ has degree $n+k$ which was important for the identification of the biderivation $c_j\tau^j$ with the polynomial $w(\tau)\in K\{\tau\}$.  
However excluding finite number of $c's$ in every degree $k$ one can  express a biderivation $c_j\tau^j$ as  a polynomial 
in $K(\tau)$ of degree less than $n.$ 
At every   stage of the reduction process, in order to assure vanishing of the term with highest power of $\tau$, a solution of a polynomial equation with the coefficients in the function field is required. There are, in some cases, efficient algorithms to do this (cf. \cite{gs00}  ). However, it is unreasonable to expect that the roots of the polynomials that appear are expressed as polynomials in the variables $c_j.$ So, in general  we do not obtain a \tm module structure.

\subsection{case $\rk\phi < \rk\psi $.}

Let $K$ be a perfect field. Denote by $\sigma$ the inverse map to $\tau$. For the sake of simplicity we denote the value of $\sigma^k(c)$ as $\up c{-k}$ for $c\in K$. Then the ring $K\{\sigma\}$ is the ring of adjoint twisted polynomials in $K$ such that 
\begin{equation}\label{ad1}
	{\sigma}x=x^{(-1)}\sigma \quad \textnormal{for}\quad x\in K,
\end{equation}
(cf. \cite{g95}).  
We can consider the  \tsm module which we can define as in definition \ref{tmod} by replacing $\tau$ with $\sigma$. Similarly, we define a morphism of the \tsm modules.  Again the category of \tsm modules  with the zero  \tsm module attached is an additive, $\F_q[t]-$linear category. Moreover, 
there is an isomorphism of $\F_q[t]-$modules 
$$\Ext^1_\sigma(\Phi^{\mathrm{ad}}, \Psi^{\mathrm{ad}})\cong \Der(\Phi^{\mathrm{ad}}, \Psi^{\mathrm{ad}})/\Derin(\Phi^{\mathrm{ad}}, \Psi^{\mathrm{ad}}),$$
where $\Phi^{\mathrm{ad}}$  and $\Psi^{\mathrm{ad}}$ are \tsm modules.

Let ${\omega}(x)=\sum_{i=0}^na_ix^i$ then ${\omega}(\tau)\in K\{\tau\}$ and ${\omega}^{\mathrm{ad}}={\omega}(\sigma)\in 
K\{\sigma\}.$ 
Let $\phi(x)=\theta+\sum_{i=1}^na_ix^i$ and $\psi(x)=\theta+\sum_{i=1}^mb_ix^i$ be polynomials in $K[x]$ then 
$\phi(\tau), \psi(\tau)$ are Drinfeld modules and $\phi(\sigma), \psi(\sigma)$ are Drinfeld \tsm modules.
\begin{lem}\label{lemma:adjoint_t_module}
Let $\phi(x)=\theta+\sum_{i=1}^na_ix^i$ and $\psi(x)=\theta+\sum_{i=1}^mb_ix^i$ be polynomials in $K[x]$ such that
$\deg\phi(x)>\deg\psi(x)$ and let the \tm module $\Ext^1_\tau(\phi(\tau), \psi(\tau))$ be given by the map 
$$\Pi:\F_q[t]\lra \mathrm{Mat}_n(K\{\tau\}).$$
Then the $\F_q[t]-$module $\Ext^1_\sigma(\phi(\sigma), \psi(\sigma))$ has a structure of a \tsm module, given by the map
$$\widehat{\Pi}:\F_q[t]\lra \mathrm{Mat}_n(K\{\sigma\}),$$
where the matrix $\widehat{\Pi}_t$ is obtained from the matrix $\Pi_t$, by replacing $\tau$ with $\sigma$ and coefficients of the form $\up ci$ with coefficients of the form $\up c{-i}$. 
\end{lem}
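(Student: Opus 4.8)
The plan is to exploit the explicit duality between twisted polynomial rings $K\{\tau\}$ and $K\{\sigma\}$ given by the adjoint (transpose) operation, and to show that this operation carries the biderivation calculus for $(\phi(\tau),\psi(\tau))$ to the biderivation calculus for $(\phi(\sigma),\psi(\sigma))$ in a way that is compatible with the $\F_q[t]$-module structure. Concretely, for a polynomial $w(x)=\sum_i c_i x^i$ write $w(\tau)^{\mathrm{ad}}:=w(\sigma)=\sum_i c_i\sigma^i$; since $K$ is perfect this is an additive bijection $K\{\tau\}\to K\{\sigma\}$, and one checks the anti-homomorphism property $(uv)^{\mathrm{ad}}=v^{\mathrm{ad}}u^{\mathrm{ad}}$ together with the rule that conjugating $c$ past $\tau^i$ (producing $\up ci$) becomes conjugating $c$ past $\sigma^i$ (producing $\up c{-i}$). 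The first step is therefore to record these formal identities and to observe that $\phi(\tau)^{\mathrm{ad}}=\phi(\sigma)$, $\psi(\tau)^{\mathrm{ad}}=\psi(\sigma)$ are Drinfeld \tsm modules of the same ranks, so $\deg\phi(\sigma)>\deg\psi(\sigma)$ still holds.

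Next I would transport the entire description of $\Der$ and $\Derin$ through the adjoint. A biderivation for $(\phi(\sigma),\psi(\sigma))$ is an $\F_q$-linear $\delta:\F_q[t]\to K\{\sigma\}$ with $\delta(ab)=\psi(\sigma)_a\delta(b)+\delta(a)\phi(\sigma)_b$, and just as in the $\tau$-case it is determined by $\delta(t)\in K\{\sigma\}$, giving $\Der_\sigma(\phi(\sigma),\psi(\sigma))\cong K\{\sigma\}$. The inner biderivations are generated by $\delta^{(c\sigma^k)}(t)=c\sigma^k\phi(\sigma)-\psi(\sigma)c\sigma^k$, and applying the anti-homomorphism property one gets a formula for $\delta^{(c\sigma^k)}(t)$ that is literally obtained from \eqref{iso1} by the substitution $\tau\mapsto\sigma$, $\up ci\mapsto\up c{-i}$ — in particular its $\sigma$-degree is $n+k$ for $c\neq 0$. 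Hence the exact analogue of Lemma \ref{lemma_iso_Ext} gives $\Ext^1_\sigma(\phi(\sigma),\psi(\sigma))\cong K\{\sigma\}_{<n}$ as $\F_q$-vector spaces, with the reduction algorithm word-for-word the same. Then I would check that the $\F_q[t]$-action $a*\delta=\psi(\sigma)_a\delta$ matches, under the adjoint identification of cosets, the action $a*\delta=\psi(\tau)_a\delta$ in the $\tau$-case: this is again the anti-homomorphism property applied to the reduction steps, since left multiplication by $\psi_a$ in $K\{\tau\}$ corresponds under ${}^{\mathrm{ad}}$ to right multiplication by $\psi(\sigma)_a$ on the adjoint side, but because we only ever multiply on the left by the fixed element $\psi_a$ and the reduction generators $\delta^{(c\sigma^k)}$ are the adjoints of the $\delta^{(c\tau^k)}$, the bookkeeping of coefficients is the mirror image. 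The upshot is a commutative square of $\F_q[t]$-modules identifying $\Ext^1_\sigma(\phi(\sigma),\psi(\sigma))$ with $\Ext^1_\tau(\phi(\tau),\psi(\tau))$ after the substitution $\up ci\leftrightarrow\up c{-i}$.

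Finally, I would read off the matrix. In the proof of Proposition \ref{prop:ext_as_t_module} the entries of $\Pi_t$ are obtained entirely by running the reduction algorithm: each entry is a skew polynomial $w_l(\tau)$ whose coefficients are $\F_q$-polynomial expressions in the $a_i,b_j,\theta$ and their Frobenius twists $\up{(\cdot)}{k}$. Carrying out the identical algorithm in $K\{\sigma\}$ — which step 2 shows is legitimate — produces the same expressions with every $\tau^l$ replaced by $\sigma^l$ and every $k$-fold twist $\up{(\cdot)}{k}$ replaced by $\up{(\cdot)}{-k}$. Therefore the matrix $\widehat\Pi_t$ so obtained has the shape $(\theta I + N)\sigma^0 + \sum A_i\sigma^i$ with $N=0$ and $I$ the identity (conditions $(i)$–$(iv)$ transfer verbatim), i.e.\ it defines a \tsm module, and $\widehat\Pi:\F_q[t]\to\mathrm{Mat}_n(K\{\sigma\})$ is an $\F_q$-algebra homomorphism because $\Pi$ was one and the adjoint substitution is compatible with composition by the anti-homomorphism property. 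This yields exactly the asserted description of $\widehat\Pi_t$.

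The main obstacle is the bookkeeping in step 2: because ${}^{\mathrm{ad}}$ is an \emph{anti}-isomorphism, one must be careful that multiplying on the left by $\psi_a$ (which is what the $\F_q[t]$-structure \eqref{modstr} does) really does correspond on the adjoint side to the operation defining the $\F_q[t]$-structure on $\Ext^1_\sigma$, and that the generators used for reduction correspond correctly; a sign or side error here would break the claimed coefficient substitution $\up ci\mapsto\up c{-i}$. Once the dictionary "left/right swap plus $\tau\leftrightarrow\sigma$ plus twist-index negation" is established and seen to intertwine the two reduction procedures, the conclusion is immediate — essentially every displayed formula from Lemma \ref{lemma_iso_Ext} through Proposition \ref{prop:ext_as_t_module} has a mirror image with no new content.
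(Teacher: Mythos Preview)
Your overall strategy---rerun the reduction algorithm of Lemma~\ref{lemma_iso_Ext} and Proposition~\ref{prop:ext_as_t_module} in $K\{\sigma\}$ and observe that every formula is obtained from its $\tau$-counterpart by the substitution $\tau\mapsto\sigma$, $\up ci\mapsto\up c{-i}$---is exactly what the paper does. The paper simply computes $t*c_i\sigma^i=\psi(\sigma)c_i\sigma^i$ and $\delta^{(c\sigma^k)}(t)$ directly, compares with \eqref{eq:multiplication_t__on_generator} and \eqref{iso1}, and notes the parallel; nothing more is needed.

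However, the device you build the argument on is wrong. The map $w(\tau)^{\mathrm{ad}}:=w(\sigma)$, $\sum c_i\tau^i\mapsto\sum c_i\sigma^i$, is \emph{not} an anti-homomorphism: take $u=\tau$, $v=c\in K$; then $uv=\tau c=\up c1\tau$, so $(uv)^{\mathrm{ad}}=\up c1\sigma$, while $v^{\mathrm{ad}}u^{\mathrm{ad}}=c\sigma$. (The genuine anti-isomorphism is the paper's $(-)^\sigma$, $\sum a_i\tau^i\mapsto\sum a_i^{(-i)}\sigma^i$, but that does \emph{not} send $\phi(\tau)$ to $\phi(\sigma)$, so it cannot be used to transport the statement as you want.) Consequently, every place where you invoke ``the anti-homomorphism property''---to get the formula for $\delta^{(c\sigma^k)}$, and especially to match left multiplication by $\psi(\tau)_a$ with the $\F_q[t]$-action on the $\sigma$-side---is unjustified. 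You yourself flag this as the main obstacle, and indeed it does not go through as written.

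The fix is to drop the anti-homomorphism packaging entirely and argue, as the paper does, by direct computation: the single identity $\sigma^i c=\up c{-i}\sigma^i$ (versus $\tau^i c=\up ci\tau^i$) is all that is needed to see that \eqref{iso1} and \eqref{eq:multiplication_t__on_generator}, and hence every step of the reduction, have $\sigma$-versions obtained by the stated substitution. Your steps~2 and~3 then stand once you replace ``by the anti-homomorphism property'' with this one-line verification.
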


\begin{proof}
	Similarly as in the proof of Lemma \ref{lemma_iso_Ext} we reduce a biderivation $\delta\in \Der(\phi(\sigma), \psi(\sigma))$ by the generators $\delta^{(c\sigma^k)}$ of $\Derin(\phi(\sigma), \psi(\sigma))$. As a result we obtain that  
	$$\Der(\phi(\sigma), \psi(\sigma))\cong K\{\sigma\}_{<n}:=\big\{w(\sigma)\in K\{\sigma\}\mid deg_\sigma(w)<n\big\}.$$
	Let $c\sigma^k$ be a generator of $K\{\sigma\}_{<n}$. Then 
	\begin{equation}\label{eq:multiplication_t_adjoint_on_generator}
		t*c_i\sigma^i=\psi(\sigma)c_i\sigma^i=\theta c_i\sigma^i+\sum\limits_{j=1}^mb_j\up {c_i}{-j}\sigma^{j+i}.
	\end{equation}
	Comparing the equality \eqref{eq:multiplication_t_adjoint_on_generator} with the value of $t*c_i\tau^i$ (see \eqref{eq:multiplication_t__on_generator}), we conclude that $t*c_i\sigma^i$ can be obtained from $t*c_i\tau^i$, by  replacing $\tau$ with $\sigma$ and coefficients of the form $\up {c_i}j$ with the coefficients of the form $\up {c_i}{-j}$.

	It is easy see, that in the same way, we can obtain the generator $\delta^{(c\sigma^k)}$ from $\delta^{(c\tau^k)}$. Therefore after the reduction process  $t*c_i\sigma^i$ can be obtained from  $t*c_i\tau^i$ by the previously described replacement.  This proves the claim. 
\end{proof}

\begin{ex}\label{ex:adjoint_t_module}
	 Consider the following polynomials: $\phi(x)=\theta+ax^3$ and $\psi(x)=\theta+bx^2$ for fixed $a,b\in K$. Then the  \tm module structure on  $\Ext^1_\tau(\phi(\tau), \psi(\tau))$ is given by the following matrix:
	$$\Pi_t=\begin{bmatrix}
		\theta&0&0\\
		0&\theta& \dfrac{b}{\up a1}\Big(\theta-\up\theta1\Big)\tau^2\\
		b\tau^2 & \dfrac{b\cdot \up b2}{\up a2}\tau^4 & \theta+ \dfrac{b\cdot \up b{2}\cdot \up b{4} }{\up a{5}\cdot \up a{2}}\tau^6
	\end{bmatrix}$$
	Then the \tsm module structure on $\Ext^1_\sigma(\phi(\sigma), \psi(\sigma))$ is given by the matrix:
	$$\widehat{\Pi}_t=\begin{bmatrix}
		\theta&0&0\\
		0&\theta& \dfrac{b}{\up a{-1}}\Big(\theta-\up\theta{-1}\Big)\sigma^2\\
		b\sigma^2 & \dfrac{b\cdot \up b{-2}}{\up a{-2}}\sigma^4 & \theta+ \dfrac{b\cdot \up b{-2}\cdot \up b{-4} }{\up a{-5}\cdot \up a{-2}}\sigma^6
	\end{bmatrix}$$
Let us note, that in this example we did not carry out calculations, because the matrix $\Pi_t$ can be easily obtained using the recursive procedure described in  Proposition \ref{prop:ext_as_t_module}.
\end{ex}

Consider the following maps:
\begin{equation}\label{maps1}
(-)^{\sigma}: K\{\tau\}\rightarrow K\{\sigma\};  \quad \Big(\sum_{i=0}^na_i\tau^i\Big)^{\sigma}=\sum_{i=0}^na_i^{(-i)}{\sigma}^i,
\end{equation}
\begin{equation}\label{maps2}
(-)^{\tau}: K\{\sigma\}\rightarrow K\{\tau\};  \quad \Big(\sum_{i=0}^nb_i\sigma^i\Big)^{\tau}=\sum_{i=0}^nb_i^{(i)}{\tau}^i.
\end{equation}
These maps are ${\mathbb F}_q$-linear mutual inverses.
 We associate with $\Phi : {\mathbb F}_q[t]\rightarrow {\mathrm{Mat}}_e(K\{\tau\})$ the adjoint homomorphism 
\begin{equation}\label{ad}
\Phi^{\sigma}:{\mathbb F}_q[t]\rightarrow {\mathrm{Mat}}_e(K\{\sigma\})
\end{equation}
such that    each matrix  $X_t$ is mapped  to $\Big[\big(X_t(\tau)\big)^{\sigma}\Big]^T$ i.e.
the matrix entry $X_{i,j}(\tau)$ is mapped  to $\big(X_{j,i}(\tau)\big)^{\sigma}.$ The inverse of $(-)^{\sigma}$ is given 
by the map that associates with  $\Gamma : {\mathbb F}_q[t]\rightarrow {\mathrm{Mat}}_e(K\{\sigma\})$ 
the following homomorphism:
\begin{equation}\label{ad1}
\Gamma^{\tau}:{\mathbb F}_q[t]\rightarrow {\mathrm{Mat}}_e(K\{\tau\}); \quad X_t \rightarrow \Big[\big(X_t(\sigma)\big)^{\tau}\Big]^T
\end{equation}
We have the following duality:
\begin{thm}\label{Duality}
	Assume that $K$ is a perfect $A-$field. 
Let $\Phi$ and $\Psi$ be \tm modules. Then there exists an isomorphism of ${\mathbb F}_q[t]$-modules:
\begin{equation}\label{diso}
\Ext^1_{\tau}(\Phi,\Psi)\cong \Ext^1_{\sigma}(\Psi^{\sigma},\Phi^{\sigma})
\end{equation} 
\end{thm}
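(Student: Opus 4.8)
The plan is to establish \eqref{diso} at the level of biderivations, transporting $\Der(\Phi,\Psi)$ onto $\Der(\Psi^{\sigma},\Phi^{\sigma})$ by the adjoint construction \eqref{ad}. Write $d=\dim\Phi$, $e=\dim\Psi$, and recall that evaluation at $t$ identifies $\Der(\Phi,\Psi)$ with $\mathrm{Mat}_{e\times d}(K\{\tau\})$ and $\Der(\Psi^{\sigma},\Phi^{\sigma})$ with $\mathrm{Mat}_{d\times e}(K\{\sigma\})$. First I would record the ring-theoretic input: the map $(-)^{\sigma}\colon K\{\tau\}\to K\{\sigma\}$ of \eqref{maps1} is an $\F_q$-linear \emph{anti}-isomorphism, i.e.\ $(fg)^{\sigma}=g^{\sigma}f^{\sigma}$ (a one-line check on monomials $a\tau^i$, $b\tau^j$, where perfectness of $K$ is used), with inverse $(-)^{\tau}$. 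Consequently, writing $X^{\ast}:=\big[(X)^{\sigma}\big]^{T}$ as in \eqref{ad}, the map $X\mapsto X^{\ast}$ is an $\F_q$-linear bijection $\mathrm{Mat}_{m\times n}(K\{\tau\})\to\mathrm{Mat}_{n\times m}(K\{\sigma\})$ with $(XY)^{\ast}=Y^{\ast}X^{\ast}$ whenever $XY$ is defined; in particular $\Phi^{\sigma}(a)=(\Phi_a)^{\ast}$ and $\Psi^{\sigma}(a)=(\Psi_a)^{\ast}$, and $\Phi^{\sigma},\Psi^{\sigma}$ are genuine $\F_q$-algebra homomorphisms because $\F_q[t]$ is generated by $t$.

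Next I would define $\Theta\colon\Der(\Phi,\Psi)\to\Der(\Psi^{\sigma},\Phi^{\sigma})$ to be the $\F_q$-linear bijection with $\Theta(\delta)(t):=\big(\delta(t)\big)^{\ast}$ (equivalently $\Theta(\delta)(a)=(\delta(a))^{\ast}$ for all $a$, obtained by applying $(-)^{\ast}$ to $\delta(t^n)=\sum_{i+j=n-1}\Psi_t^{i}\,\delta(t)\,\Phi_t^{j}$). I would then check that $\Theta$ carries $\Derin(\Phi,\Psi)$ onto $\Derin(\Psi^{\sigma},\Phi^{\sigma})$: for $U\in\mathrm{Mat}_{e\times d}(K\{\tau\})$,
\[
\Theta\big(\delta^{(U)}\big)(t)=\big(U\Phi_t-\Psi_t U\big)^{\ast}=\Phi^{\sigma}_t\,U^{\ast}-U^{\ast}\,\Psi^{\sigma}_t=-\big(U^{\ast}\Psi^{\sigma}_t-\Phi^{\sigma}_t U^{\ast}\big)=\delta^{(-U^{\ast})}(t),
\]
and $U\mapsto -U^{\ast}$ is a bijection onto $\mathrm{Mat}_{d\times e}(K\{\sigma\})$. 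Hence, via \eqref{iso_ext} and its $\sigma$-analogue, $\Theta$ descends to an $\F_q$-linear isomorphism $\Ext^1_{\tau}(\Phi,\Psi)\cong\Ext^1_{\sigma}(\Psi^{\sigma},\Phi^{\sigma})$.

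Finally I would verify $\F_q[t]$-linearity. By \eqref{modstr} the action sends $a\ast\overline{\delta}$ to $\overline{\Psi_a\delta}$; since $\F_q[t]$ is commutative, the biderivation identity \eqref{delta} yields $\Psi_a\delta-\delta\Phi_a=\delta^{(-\delta(a))}\in\Derin(\Phi,\Psi)$, so the class $a\ast\overline{\delta}$ is also represented by the biderivation $b\mapsto\delta(b)\Phi_a$, with value $\delta(t)\Phi_a$ at $t$. Applying $\Theta$ to this representative gives value $\big(\delta(t)\Phi_a\big)^{\ast}=\Phi^{\sigma}_a\big(\delta(t)\big)^{\ast}$ at $t$, which is exactly the value at $t$ of $a\ast\Theta(\overline{\delta})$ for the $\F_q[t]$-module structure on $\Ext^1_{\sigma}(\Psi^{\sigma},\Phi^{\sigma})$, in which $\Phi^{\sigma}$ is the sub-object and therefore acts by left multiplication. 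Thus $\Theta$ is $\F_q[t]$-linear, which proves the theorem.

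The only delicate part is the bookkeeping: $(-)^{\ast}$ reverses products and swaps matrix dimensions, so one must track which of $\Phi,\Psi$ is the sub- and which the quotient-object on each side. In particular the naive guess ``$a\ast\overline{\delta}\mapsto\Psi^{\sigma}_a\Theta(\overline{\delta})$'' is \emph{not} $\F_q[t]$-linear; passing to the ``pullback-by-the-quotient'' description of the action, valid by commutativity of $\F_q[t]$, is what makes the two module structures match.
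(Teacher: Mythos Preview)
Your proof is correct and follows the same route as the paper: transport $\Der(\Phi,\Psi)$ to $\Der(\Psi^{\sigma},\Phi^{\sigma})$ via the anti-isomorphism $(-)^{\ast}=[(-)^{\sigma}]^{T}$, check that inner biderivations go to inner biderivations, and verify $\F_q[t]$-linearity. The only cosmetic difference is in the last step: the paper uses the pullback description of the $t$-action on the $\sigma$-side (right multiplication by $\Psi_t^{\sigma}$), which makes the compatibility immediate from $(\Psi_t\,\delta(t))^{\ast}=\delta(t)^{\ast}\,\Psi_t^{\sigma}$, whereas you use the pushout description (left multiplication by $\Phi_t^{\sigma}$) and correctly invoke the equivalence $\Psi_a\delta\equiv\delta\,\Phi_a$ modulo $\Derin$ to make the two sides match.
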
 
\begin{proof}
Notice that 
\begin{equation}\label{exta}
\Ext^1_{\tau}(\Phi,\Psi)=\Der(\Phi,\Psi)/\Der_{in}(\Phi,\Psi); \,\, t*\delta_t(\tau)=\Psi_t\cdot\delta_t(\tau)
\end{equation}
and
\begin{equation}\label{exta1}
\Ext^1_{\sigma}(\Psi^{\sigma},\Phi^{\sigma})=\Der(\Psi^{\sigma},\Phi^{\sigma})/\Der_{in}(\Psi^{\sigma},\Phi^{\sigma}); \,\, t*\delta_t(\tau)=\delta_t^{\sigma}(\sigma)\Psi_t^{\sigma}
\end{equation}
One readily verifies that $(-)^{\sigma}$ is well defined since it maps inner biderivations onto inner biderivations. Similarly,
for $(-)^{\tau}.$ Thus we have a bijective map (induced by $(-)^{\sigma}$) $\Ext^1_{\tau}(\Phi,\Psi)\rightarrow \Ext^1_{\sigma}(\Psi^{\sigma},\Phi^{\sigma}).$
As $$(t*c\tau^k)^{\sigma}=(\Psi_tc\tau^k)^{\sigma}=c^{(-k)}\sigma^k\Psi_t^{\sigma}=t*c^{(-k)}\sigma^k=t*(c\tau^k)^{\sigma}
$$ we see that $(-)^{\sigma}$ is a homomorphism of ${\mathbb F}_q[t]$-modules.
\end{proof}

	 Lemma \ref{lemma:adjoint_t_module} and  Theorem \ref{Duality} allow one  to compute easily the ${\mathbb F}_q[t]$-module structure of $\Ext^1_{\tau}(\phi, \psi)$   
	for Drinfeld modules satisfying $\rk\phi<\rk\psi.$ We will see that this ${\mathbb F}_q[t]$-module structure
	comes from the \tsm module structure in a natural way. 
\begin{ex}
	Let $\psi_t=\theta +b\tau^2$ and $\phi_t=\theta +a\tau^3,$  for fixed $a,b\in K,$ be  Drinfeld modules. 
	Then    Theorem \ref{Duality} implies that there is an isomorphism of $\F_q[t]-$modules
	$$\Ext^1_{\tau}(\theta +b\tau^2,\theta +a\tau^3)\cong \Ext^1_{\sigma}(\theta+\up a{-3}\sigma^3,\theta+\up b{-2}\sigma^2).$$
	By Lemma \ref{lemma:adjoint_t_module} the adjoint  ${\mathbf t}^{\sigma}-$module structure on  
	$$\Ext^1_{\sigma}(\theta+\up a{-3}\sigma^3,\theta+\up b{-2}\sigma^2)$$ is given by the matrix $\widehat{\Pi}_t$
	obtained from matrix $\Pi_t$ of 
	$$\Ext^1_{\tau}(\theta+\up a{-3}\tau^3,\theta+\up b{-2}\tau^2),$$
	 by replacing $\tau$ with $\sigma$ and coefficients of the form $\up ci$ with coefficients of the form $\up c{-i}$. 
	Therefore $\Ext^1_{\tau}(\theta +b\tau^2,\theta +a\tau^3)$ has the \tsm module structure, given by the matrix
	$$\begin{bmatrix}
		\theta&0&0\\
		0&\theta& \dfrac{\up b{-2}}{\up a{-4}}\Big(\theta-\up\theta{-1}\Big)\sigma^2\\
		\up b{-2}\sigma^2 & \dfrac{\up b{-2}\cdot \up b{-4}}{\up a{-5}}\sigma^4 & \theta+ \dfrac{\up b{-2}\cdot \up b{-4}\cdot \up b{-6} }{\up a{-8}\cdot \up a{-5}}\sigma^6
	\end{bmatrix}$$
\end{ex}

	The following theorem is a consequence of  Theorem \ref{thm:short_sequence_t_modules},  Lemma \ref{lemma:adjoint_t_module} and  Theorem \ref{Duality}.
\begin{thm}  Assume that $K$ is a perfect $A-$field. 
	Let $\phi$ and $\psi$ be Drinfeld modules, such that $\rk\phi<\rk\psi$. There is a short exact sequence of \tsm modules
	\begin{equation}
		0\lra \Ext^1_{0,\tau}(\phi, \psi)\lra \Ext^1_\tau(\phi,\psi)\lra \G_a\lra 0.
	\end{equation}
\end{thm}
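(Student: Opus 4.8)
The plan is to reduce the statement to the already–settled case $\rk\phi>\rk\psi$ by passing to adjoints. Set $\phi^{\sigma}=\phi(\sigma)$ and $\psi^{\sigma}=\psi(\sigma)$, the adjoint Drinfeld $\mathbf t^{\sigma}$-modules. Since $\phi^{\sigma}$ has $\sigma$-degree $\rk\phi$ and $\psi^{\sigma}$ has $\sigma$-degree $\rk\psi$, the hypothesis $\rk\phi<\rk\psi$ means that on the $\sigma$-side the first argument $\psi^{\sigma}$ has the \emph{larger} degree, i.e.\ $\deg_{\sigma}\psi^{\sigma}-\deg_{\sigma}\phi^{\sigma}>0$. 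This is exactly the configuration in which Theorem \ref{thm:short_sequence_t_modules} (and Lemma \ref{lem:ext0}, Proposition \ref{prop:ext_as_t_module}) was proved.

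Next I would observe that the whole chain of arguments of Sections \ref{extension}--\ref{extension1} goes through verbatim with $\tau$ replaced by $\sigma$ when $K$ is perfect: the proofs of Lemma \ref{lemma_iso_Ext}, Lemma \ref{lem:ext0} and Theorem \ref{thm:short_sequence_t_modules} only use that $K\{\sigma\}$ is a twisted polynomial ring in which $\deg_{\sigma}$ is additive, and that the first Drinfeld (now $\mathbf t^{\sigma}$-) module has strictly larger $\sigma$-degree than the second. The compatibility of the recursive reduction procedure with the substitution $\tau\leftrightarrow\sigma$ is precisely the content of Lemma \ref{lemma:adjoint_t_module}. Define $\Ext^1_{0,\sigma}(\psi^{\sigma},\phi^{\sigma})$ by the same recipe $\Der_0/\bigl(\Der_0\cap\Derin\bigr)$. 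Then the $\sigma$-version of Theorem \ref{thm:short_sequence_t_modules} applied to the pair $(\psi^{\sigma},\phi^{\sigma})$ gives a short exact sequence of $\mathbf t^{\sigma}$-modules
$$0\lra \Ext^1_{0,\sigma}(\psi^{\sigma},\phi^{\sigma})\lra \Ext^1_{\sigma}(\psi^{\sigma},\phi^{\sigma})\lra \G_a\lra 0,$$
in which the inclusion is induced by the matrix $[\,0\ \cdots\ 0\,;\, I_{\rk\psi-1}\,]^{T}$ over $K\{\sigma\}$ and the quotient is the trivial $\mathbf t^{\sigma}$-module $\G_a$.

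Finally I would transport this sequence back through the Duality Theorem. By Theorem \ref{Duality} the map $(-)^{\sigma}$ induces an $\F_q[t]$-module isomorphism $\Ext^1_{\tau}(\phi,\psi)\cong \Ext^1_{\sigma}(\psi^{\sigma},\phi^{\sigma})$. Since $(-)^{\sigma}$ is degree-preserving (it sends the $\tau^i$-component to the $\sigma^i$-component) and constant-term-preserving (the coefficient of $\tau^0$, say $a_0$, goes to the coefficient $a_0^{(0)}=a_0$ of $\sigma^0$), it carries $\Der_0(\phi,\psi)$ bijectively onto $\Der_0(\psi^{\sigma},\phi^{\sigma})$ and restricts to an $\F_q[t]$-module isomorphism $\Ext^1_{0,\tau}(\phi,\psi)\cong \Ext^1_{0,\sigma}(\psi^{\sigma},\phi^{\sigma})$ compatible with the inclusions into the full $\Ext^1$ groups. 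Transporting the $\mathbf t^{\sigma}$-module structures and the two maps of the displayed sequence along these isomorphisms produces exactly the asserted short exact sequence of $\mathbf t^{\sigma}$-modules $0\to \Ext^1_{0,\tau}(\phi,\psi)\to \Ext^1_{\tau}(\phi,\psi)\to \G_a\to 0$.

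The main point to be careful about is the middle step: one must check that all the "rank" bookkeeping of Sections \ref{extension}--\ref{extension1} really does survive the substitution $\tau\rightsquigarrow\sigma$ so that Theorem \ref{thm:short_sequence_t_modules} is legitimately applicable on the $\sigma$-side, and that the isomorphism of Theorem \ref{Duality} is genuinely compatible with the $\Der_0$-filtration and with the explicit matrices defining the inclusion and the projection onto $\G_a$. Both facts are essentially formal consequences of the degree- and constant-term-preserving nature of $(-)^{\sigma}$, together with Lemma \ref{lemma:adjoint_t_module}, but verifying these compatibilities is what makes the three cited results actually combine into the statement.
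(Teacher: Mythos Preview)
Your proposal is correct and follows essentially the same route as the paper, which simply records the statement as a consequence of Theorem~\ref{thm:short_sequence_t_modules}, Lemma~\ref{lemma:adjoint_t_module} and Theorem~\ref{Duality}; you have merely spelled out the compatibilities (degree-preserving, constant-term-preserving behaviour of $(-)^{\sigma}$, and the restriction to $\Der_0$) that make these three ingredients combine. One small notational caveat: in the paper $\phi^{\sigma}$ denotes the twisted adjoint $\big(\sum a_i\tau^i\big)^{\sigma}=\sum a_i^{(-i)}\sigma^i$, not the naive substitution $\phi(\sigma)$ used in Lemma~\ref{lemma:adjoint_t_module}, so your opening identification ``$\phi^{\sigma}=\phi(\sigma)$'' is not literally the paper's convention; however this does not affect the argument, since both are Drinfeld $\mathbf t^{\sigma}$-modules of the correct $\sigma$-degree and the $\sigma$-analogue of Theorem~\ref{thm:short_sequence_t_modules} applies to any such pair.
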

\begin{rem}
	In a similar way, one can prove ''\tsm version'' of the Theorem \ref{thm:product_ext} and Corollary \ref{cor:product_ext_sequence}  for the products of Drinfeld modules.  
\end{rem}

\section{$\Ext^1_\tau(\Phi, \psi)$, where $\Phi$ is a \tm module and $\psi$ is a Drinfeld module}\label{Phipsi}

In this section we consider $\Ext^1_{\tau}$, where $\Phi$ is a \tm module such that the matrix at the highest power  $\tau^{\rk \Phi}$ of ${\Phi}_t$ is invertible and $\psi$ is a Drinfeld module. First  assume that this matrix is the identity matrix.
 
\begin{lem}\label{lem:redukcja_t_modul_i_Drinfeld}
	Let $\Phi_t=(\theta I+ N_{\Phi})\tau^0+\sum\limits_{j=1}^n A_j\tau^j $ be a \tm module of dimension $d$, where $A_n=I$  and let  $\psi_t=\theta +\sum\limits_{j=1}^mb_j\tau^j$ be a Drinfeld module. If $\rk \Phi>\rk\psi$ then we have the 
	following isomorphisms of  $\F_q-$ vector spaces :
	\begin{itemize}
		\item[(i)] $\Ext^1_\tau(\Phi, \psi)\cong  \Big(K\{\tau\}_{<\rk\Phi}\Big)^{\oplus d}$  
		\item[(ii)] $\Ext_{0,\tau}(\Phi, \psi)\cong \bigoplus_{i=1}^dK\{\tau\}_{ [1,r_i] }$, where 
		$r_i=\rk\Phi-1$, if the  $i-$th row of  $N_{\Phi}$ is null and $r_i=\rk\Phi$ otherwise.  
	\end{itemize}
\end{lem}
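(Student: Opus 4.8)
The goal is to describe $\Ext^1_\tau(\Phi,\psi)$ and $\Ext_{0,\tau}(\Phi,\psi)$ as $\F_q$-vector spaces when $\Phi$ is a $\mathbf t$-module of dimension $d$ whose leading matrix $A_n$ is the identity, and $\psi$ is a Drinfeld module of smaller rank. By the isomorphism \eqref{iso_ext} we have $\Ext^1_\tau(\Phi,\psi)\cong \Der(\Phi,\psi)/\Derin(\Phi,\psi)$, and a biderivation $\delta$ is determined by $\delta(t)\in\mathrm{Mat}_{1\times d}(K\{\tau\})$, i.e.\ by a row vector $w=(w_1,\dots,w_d)$ of twisted polynomials. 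So the first step is to write down the generators of $\Derin(\Phi,\psi)$ explicitly: for $U=c\tau^k E_{1\times i}\in\mathrm{Mat}_{1\times d}(K\{\tau\})$ (a row vector with the single entry $c\tau^k$ in position $i$) compute $\delta^{(U)}(t)=U\Phi_t-\psi_t U$, exactly as in \eqref{iso1} but now keeping track of the matrix structure of $\Phi_t$. The key feature to extract is: since $A_n=I$, the term of highest $\tau$-degree in $\delta^{(c\tau^k E_{1\times i})}(t)$ is $c\tau^{k+n}$ sitting in position $i$, so it has $\tau$-degree exactly $n+k=\rk\Phi+k$ in the $i$-th coordinate.}

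**Main argument for (i).** Having the generators in hand, I would run the same downward-induction reduction as in Lemma \ref{lemma_iso_Ext}, but coordinate by coordinate: given any row vector $w=(w_1,\dots,w_d)$, for each $i$ I can subtract multiples $\delta^{(c\tau^k E_{1\times i})}$ to kill, one at a time, every monomial of $\tau$-degree $\ge \rk\Phi$ appearing in $w_i$, because that generator's leading term is precisely $c\tau^{k+n}$ in slot $i$ and has strictly lower-degree contributions in the other slots (which get absorbed as we keep reducing). The downward induction on the maximal $\tau$-degree appearing across all coordinates terminates, leaving a representative with every $w_i\in K\{\tau\}_{<\rk\Phi}$. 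For uniqueness: the nonzero inner biderivations all have, in at least one coordinate, $\tau$-degree $\ge \rk\Phi$, so no nonzero element of $\bigl(K\{\tau\}_{<\rk\Phi}\bigr)^{\oplus d}$ is inner; hence the coset map $\bigl(K\{\tau\}_{<\rk\Phi}\bigr)^{\oplus d}\to\Der(\Phi,\psi)/\Derin(\Phi,\psi)$ is an $\F_q$-linear isomorphism, giving (i).

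**Main argument for (ii).** For $\Ext_{0,\tau}$ I intersect with $\Der_0(\Phi,\psi)=\{\delta:\ d\delta(t)=0\}$, so I need the constant ($\tau^0$) term of the row vector $\delta(t)$ to vanish. From the generators: $d\,\delta^{(c E_{1\times i})}(t)$ is the $i$-th row of $U\Phi_t-\psi_t U$ evaluated at $\tau^0$, which is $c\,(\theta I+N_\Phi)$-contribution minus $\theta c$, i.e.\ it equals $c$ times the $i$-th row of $N_\Phi$. Thus $\delta^{(cE_{1\times i})}\in\Der_0$ iff the $i$-th row of $N_\Phi$ is zero; and for $k\ge 1$ every $\delta^{(c\tau^k E_{1\times i})}$ already lies in $\Der_0$. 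Therefore, when the $i$-th row of $N_\Phi$ is null, in the reduction above I may also eliminate the $\tau^0$-term of $w_i$ (using $\delta^{(cE_{1\times i})}\in\Der_0$), so the $i$-th coordinate lands in $K\{\tau\}_{[1,\rk\Phi-1]}$; when the $i$-th row of $N_\Phi$ is nonzero I cannot, and the $\tau^0$-term survives, so (after checking the surviving constant is genuinely free, which follows because the only generator with nonzero $\tau^0$-term in slot $i$, namely $\delta^{(cE_{1\times i})}$, is not in $\Der_0$) the $i$-th coordinate ranges over $K\{\tau\}_{[0,\rk\Phi-1]}=K\{\tau\}_{<\rk\Phi}$, i.e.\ $r_i=\rk\Phi$. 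Assembling coordinates gives $\Ext_{0,\tau}(\Phi,\psi)\cong\bigoplus_{i=1}^d K\{\tau\}_{[1,r_i]}$ with $r_i$ as stated.

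**Expected obstacle.** The routine part is the reduction bookkeeping; the genuine subtlety is verifying that the lower-order "cross terms" produced when reducing slot $i$ (the terms of $\delta^{(c\tau^k E_{1\times i})}(t)$ in degrees $<n+k$, which can spill over within slot $i$ and, via the off-diagonal entries of $A_1,\dots,A_{n-1}$ and $N_\Phi$, affect the constant terms) do not re-introduce constant terms into slots $j$ whose row of $N_\Phi$ was null — one must check that $\delta^{(c\tau^k E_{1\times i})}(t)$ has no $\tau^0$-term at all for $k\ge 1$, which is immediate since every monomial there has $\tau$-degree $\ge k\ge 1$, and that for $k=0$ the only constant contribution is $c$ times the $i$-th row of $N_\Phi$ in slot-$i$'s own coordinate only (because $N_\Phi$ is a $d\times d$ matrix acting as $c E_{1\times i}(\theta I+N_\Phi)=\theta c E_{1\times i}+c\,(i\text{-th row of }N_\Phi)$, which indeed only touches a single row index on the left but distributes across columns; here a careful index check is needed since $\delta(t)$ is a $1\times d$ row vector). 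Handling this cleanly — possibly by first reducing all $\tau$-degrees $\ge\rk\Phi$ everywhere, and only at the end separately treating $\tau^0$ — is the step I would write out most carefully.
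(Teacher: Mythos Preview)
Your argument for part (i) is correct and is essentially the paper's proof: compute the inner biderivations $\delta^{(c\tau^k E_i)}$, observe that the $i$-th entry has exact $\tau$-degree $n+k$ while all other entries have strictly smaller degree, and run the coordinate-wise downward reduction.

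Part (ii), however, contains a genuine confusion. You correctly identify the criterion $\delta^{(cE_i)}\in\Der_0(\Phi,\psi)\iff E_iN_\Phi=0$, but you then misread what this generator is used for. Recall that every element of $\Der_0(\Phi,\psi)$ already has \emph{zero} constant term, so there is no ``$\tau^0$-term that survives'' and your proposed representative space $K\{\tau\}_{[0,\rk\Phi-1]}$ in slot $i$ is not even contained in $\Der_0$. The role of $\delta^{(cE_i)}$ is different: its \emph{leading} term is $c\tau^{n}$ in slot $i$ (since $A_n=I$), so it is the unique generator that can kill a $\tau^{n}$-monomial in that slot. When the $i$-th row of $N_\Phi$ is zero, $\delta^{(cE_i)}\in\Der_0$ and you may use it, so slot $i$ reduces to $K\{\tau\}_{[1,\,n-1]}$. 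When the $i$-th row of $N_\Phi$ is nonzero, $\delta^{(cE_i)}\notin\Der_0$; the only generators for slot $i$ available inside $\Der_0\cap\Der_{in}$ are $\delta^{(c\tau^kE_i)}$ with $k\ge 1$, whose leading terms are $c\tau^{n+k}$, so the $\tau^n$-monomial in slot $i$ cannot be removed and the representative space there is $K\{\tau\}_{[1,\,n]}$. This is precisely the paper's argument, and it is what the notation $K\{\tau\}_{[1,r_i]}$ records.

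The dimensions in your version happen to coincide with the correct ones, which may be why the error slipped past; but as written your reduction step for (ii) does not take place inside $\Der_0$, so it does not compute $\Der_0/(\Der_0\cap\Der_{in})$.
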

\begin{proof}

Part $(i)$: 
	Any biderivation $\delta\in\Der(\Phi,\psi)$ is described by a matrix  belonging to $\mathrm{Mat}_{1\times d}(K\{\tau\})$.  
Let $E_i\in\mathrm{Mat}_{1\times d}(K)$ be a matrix which has $1$ at the $i$-th place and zeroes otherwise.
We will determine  an inner biderivation  $\delta^{(c\tau^k E_i)}$. 
Denote by  $a_{j, i\times l}$ the  $i\times l$-term of matrix $A_j$. 
Then the corresponding inner biderivation has the following form:

	\begin{align}\label{eq:biderywacja_wewnetrzna_t-mod_i_Drinfeld}
		\delta^{(c\tau^k E_i)}_t=& c\tau^k E_i\Big( (\theta I+ N_{\Phi})\tau^0+\sum\limits_{j=1}^n A_j\tau^j \Big)-\Big(\theta +\sum\limits_{j=1}^mb_j\tau^j\Big)c\tau^k E_i=\\
		=&\Bigg[\sum_{j=1}^{n-1}a_{j,i\times 1}^{(k)}\tau^{k+j},\dots, \sum_{j=1}^{n-1}a_{j,i\times i-1}^{(k)}\tau^{k+j}, 
		c(\theta^{(k)}-\theta)\tau^k+ \nonumber \\
		&+  \sum_{j=1}^{m}\Big(a_{j,i\times i}^{(k)}-c^{(j)}\Big)\tau^{k+j}
		+\sum_{j=m+1}^{n-1}a_{j,i\times i}^{(k)}\tau^{k+j} +c\tau^{k+n}, \nonumber  \\
		&\quad \sum_{j=1}^{n-1}a_{j,i\times i+1}^{(k)}\tau^{k+j},\dots, 
		\sum_{j=1}^{n-1}a_{j,i\times d}^{(k)}\tau^k+j  \Bigg]+cE_iN_{\Phi}^{(k)}\tau^k, \nonumber 		 
	\end{align} 
where $N_{\Phi}^{(k)}$ indicates that all terms of the matrix  $N_{\Phi}$ are raised to the power $q^k$. 
	Notice that a  polynomial at  the $i$-th  coordinate of the inner biderivation  \eqref{eq:biderywacja_wewnetrzna_t-mod_i_Drinfeld} has degree $k+n$ and the polynomials at all other coordinates have degrees less than $k+n.$ 
	Thus we can  proceed similarly as in the proof of Lemma \ref{lemma_iso_Ext} and reduce the biderivation $\delta$ so that at every coordinate we obtain a polynomial in $\tau$ of degree less than  $n=\rk\Phi$. 
	It is obvious that two different reduced biderivations determine different cosets in
	 $\Der(\Phi,\psi)/\Derin(\Phi,\psi).$ This proves part $(i)$.\\
	Part $(ii)$: Let $\delta\in\Der_0(\Phi,\psi)$, i.e. $\delta_t$ has a zero constant term.
	Notice that by means of the inner biderivation 
	  $\delta^{(c\tau^0E_i)}\in\Der_0(\Phi,\psi)$ for $i\in\{0,1,\dots, d\}$ we can reduce the $i-$th coordinate of  $\delta_t$ 
	  to a polynomial of degree less than $\rk\Phi$. If   $\delta^{(c\tau^0E_j)}\notin\Der_0(\Phi,\psi)$ then the $j-$th  coordinate of $\delta_t$ can be reduced to a polynomial of degree less than or equal to 
	   $\rk\Phi$. To finish the proof notice that the form of the inner biderivation 
	   \eqref{eq:biderywacja_wewnetrzna_t-mod_i_Drinfeld}
	 implies that $\delta^{(c\tau^0 E_i)}\in\Der_0(\Phi,\psi)$ iff
		$E_iN_{\Phi}=0$ i.e. the $i-$th row of  $N_{\Phi}$ is zero.
\end{proof}
\begin{prop}\label{propI}
		Let $\Phi_t=(\theta I+ N_{\Phi})\tau^0+\sum\limits_{j=1}^n A_j\tau^j $ be a \tm module of dimension $d$, where $A_n=I$  and let  $\psi_t=\theta +\sum\limits_{j=1}^mb_j\tau^j$ be a Drinfeld module. If $\rk \Phi>\rk\psi$ then 
		\begin{itemize}
			\item[$(i)$] $\Ext^1_{\tau}(\Phi,\psi)$ has the natural structure of a  \tm module,
			\item[$(ii)$] there exists   a short exact sequence of  \tm modules:
			$$0\lra \Ext_{0,\tau}(\Phi,\psi)\lra \Ext^1_{\tau}(\Phi,\psi)\lra \G_a^{s}\lra0,$$
			where $s$  is the number of zero rows of the nilpotent matrix $N_{\Phi}$.  
		\end{itemize}
\end{prop}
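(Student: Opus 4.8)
The plan is to follow the pattern of Section \ref{extension1}: use Lemma \ref{lem:redukcja_t_modul_i_Drinfeld} as the matrix‑valued substitute for Lemma \ref{lemma_iso_Ext} and Lemma \ref{lem:ext0}, and then repeat, in block coordinates, the reduction argument of Proposition \ref{prop:ext_as_t_module}. Throughout I identify $\delta\in\Der(\Phi,\psi)$ with $\delta_t\in\mathrm{Mat}_{1\times d}(K\{\tau\})$ and use that the $\F_q[t]$-action on $\Der(\Phi,\psi)/\Derin(\Phi,\psi)$ is $a*\delta=\psi_a\delta$, i.e.\ left multiplication by the scalar skew polynomial $\psi_a$, as in \eqref{modstr}. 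For part $(i)$, Lemma \ref{lem:redukcja_t_modul_i_Drinfeld}$(i)$ gives an $\F_q$-linear isomorphism $\Ext^1_\tau(\Phi,\psi)\cong (K\{\tau\}_{<\rk\Phi})^{\oplus d}$, and I transport the module structure to the right‑hand side. Choosing the coordinates $e_i^l:=\tau^l E_i$ for $1\le i\le d$ and $0\le l\le\rk\Phi-1$ (a basis of size $d\cdot\rk\Phi$), I compute the matrix $\Pi_t$ of $t*(-)$ column by column: before reduction
\begin{equation*}
t*(c\tau^l E_i)=\psi_t\cdot c\tau^l E_i=c\theta\tau^l E_i+\sum_{j=1}^m b_j\up cj\tau^{l+j}E_i ,
\end{equation*}
and whenever a monomial has $\tau$-degree $\ge\rk\Phi$ I kill it by the inner biderivation $\delta^{(c'\tau^k E_i)}$ of \eqref{eq:biderywacja_wewnetrzna_t-mod_i_Drinfeld}, the correcting constant $c'$ being obtained by dividing the offending coefficient by the leading coefficient of $\Phi_t$, which is $1$ since $A_n=I$.

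The computation is then the block analogue of the downward induction in Proposition \ref{prop:ext_as_t_module}; one verifies the matrix versions of the conditions $(i)$–$(iv)$ there. The decisive observation is that each reduction feeds into the coordinates $\ell\ne i$ only monomials of $\tau$-degree $\ge 1$ (these come from the off‑diagonal entries of $\Phi_t$ and from the term $c'E_iN_\Phi^{(k)}\tau^k$ in \eqref{eq:biderywacja_wewnetrzna_t-mod_i_Drinfeld}) and that the only untwisted occurrence of $c$ anywhere is the coefficient $\theta c$ of $\tau^l$ in coordinate $i$ itself. Hence the constant term $d\Pi_t$ equals $\theta\,I_{d\cdot\rk\Phi}$ (indeed the nilpotent part of the resulting $\mathbf t$-module vanishes), so $\Pi:\F_q[t]\lra\mathrm{Mat}_{d\cdot\rk\Phi}(K\{\tau\})$ satisfies Definition \ref{tmod} and equips $\Ext^1_\tau(\Phi,\psi)$ with a $\mathbf t$-module structure; this proves $(i)$.

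For part $(ii)$, the same recursion run inside $\Der_0(\Phi,\psi)$ — where one may use all $\delta^{(c\tau^k E_i)}$ with $k\ge 1$, and in addition $\delta^{(cE_i)}$ for those $i$ whose $i$-th row of $N_\Phi$ vanishes — produces, via Lemma \ref{lem:redukcja_t_modul_i_Drinfeld}$(ii)$, a matrix $\Pi^0_t$ making $\Ext_{0,\tau}(\Phi,\psi)$ a $\mathbf t$-module, exactly as in Lemma \ref{lem:ext0}$(iv)$. The inclusion $\Der_0(\Phi,\psi)\into\Der(\Phi,\psi)$ descends to an $\F_q[t]$-monomorphism $\iota:\Ext_{0,\tau}(\Phi,\psi)\into\Ext^1_\tau(\Phi,\psi)$; in the above coordinates $\iota$ is the identity on each $e_i^l$ with $l\ge 1$ and sends the extra generator $\tau^{\rk\Phi}E_i$ (present only when $E_iN_\Phi\ne 0$) to its $\Der$-reduced form $\tau^{\rk\Phi}E_i-\delta^{(E_i)}_t$, which, since $A_n=I$ and the entries of $E_i$ lie in $\F_q$, indeed lies in $(K\{\tau\}_{<\rk\Phi})^{\oplus d}$; one checks $\Pi_t\iota=\iota\Pi^0_t$ as both sides implement $\psi_t\cdot(-)$. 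It remains to identify $\coker\iota=\Der(\Phi,\psi)/\big(\Der_0(\Phi,\psi)+\Derin(\Phi,\psi)\big)$: since the constant term of $\delta^{(U)}_t$ is $dU\cdot N_\Phi$, a biderivation lies in $\Der_0+\Derin$ exactly when its constant term lies in the (left) row span of $N_\Phi$, so $\coker\iota$ is $K^d$ modulo that span, with the $\mathbf t$-action being multiplication by $\theta$; under the hypothesis on $N_\Phi$ this quotient is $s$-dimensional, giving $\coker\iota\cong\G_a^s$ and the short exact sequence of $(ii)$.

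The main obstacle is this last identification. Part $(i)$ is essentially bookkeeping once Lemma \ref{lem:redukcja_t_modul_i_Drinfeld} is available — it is the matrix reprise of Proposition \ref{prop:ext_as_t_module}. For $(ii)$ one must check that the two reduction procedures (inside $\Der_0$ and inside $\Der$) are compatible enough for $\iota$ to be a genuine monomorphism of $\mathbf t$-modules, and, more delicately, that the number of surviving ``degree‑$0$'' directions is governed precisely by the zero rows of $N_\Phi$; this is exactly where the shape of $N_\Phi$ (through Lemma \ref{lem:redukcja_t_modul_i_Drinfeld}$(ii)$) enters, and it is the step requiring the most care.
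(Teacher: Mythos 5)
Part $(i)$ of your proposal is essentially the paper's own proof: the same identification from Lemma \ref{lem:redukcja_t_modul_i_Drinfeld}$(i)$, the same reducing generators $\delta^{(c\tau^k E_i)}$ (with $A_n=I$ making the leading division trivial), and the same verification that the block analogues of conditions $(i)$--$(iv)$ of Proposition \ref{prop:ext_as_t_module} force $d\Pi_t=\theta I$; there is nothing to object to there. For part $(ii)$ you take a genuinely different route at the last step: the paper reads the quotient map off the matrix $\Pi_t$ (the rows indexed by $\tau^0E_l$ with $E_lN_\Phi=0$ have the form $[0,\dots,0,\theta,0,\dots,0]$, and $g$ is declared to be the projection onto those coordinates), whereas you identify $\coker\iota$ intrinsically as $\Der(\Phi,\psi)/\big(\Der_0(\Phi,\psi)+\Derin(\Phi,\psi)\big)$ and compute it via constant terms. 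Up to the final sentence your computation is correct and in fact sharper: since $d\delta^{(U)}_t=dU\cdot N_\Phi$, the constant term induces an isomorphism of $\coker\iota$ with $K^{1\times d}$ modulo the row space of $N_\Phi$, with $t$ acting through $\theta$.

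The gap is precisely the closing assertion that this quotient is $s$-dimensional, $s$ being the number of zero rows of $N_\Phi$. Its dimension is $d-\operatorname{rank}N_\Phi$, and this equals $s$ only when the nonzero rows of $N_\Phi$ are linearly independent; there is no such hypothesis in the statement. For example $N_\Phi=\left[\begin{smallmatrix}1&-1\\1&-1\end{smallmatrix}\right]$ is nilpotent with no zero rows ($s=0$) but of rank $1$, so your cokernel is $\G_a$, not $0$. The discrepancy comes from inner biderivations $\delta^{(U_0\tau^0)}$ with $U_0N_\Phi=0$ but $U_0$ not supported on a single coordinate (here $U_0=(c,-c)$): your constant-term argument sees them, while the coordinatewise reduction behind Lemma \ref{lem:redukcja_t_modul_i_Drinfeld}$(ii)$ --- and the paper's own proof, which only tests the biderivations $\delta^{(c\tau^0E_i)}$ --- does not. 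So, as written, your argument actually establishes the exact sequence with $\G_a^{\,d-\operatorname{rank}N_\Phi}$ on the right, and yields the stated count only under the additional assumption that the nonzero rows of $N_\Phi$ are linearly independent (e.g.\ $N_\Phi$ in Jordan form); you should either add that hypothesis explicitly or replace $s$ by $d-\operatorname{rank}N_\Phi$. (A smaller point: the intertwining $\Pi_t\iota=\iota\Pi^0_t$ and the claim that $\iota$ is mono are asserted rather than checked, but these are the same compatibilities the paper relies on; the dimension count above is the step that genuinely fails.)
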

\begin{proof}\label{prop:ext_dla_t_mod_i_Drinfeld}
	Proof of part $(i)$ is analogous to the proof of Proposition \ref{prop:ext_as_t_module}.
	Let $E_i\in {\mathrm{Mat_{1\times d}}}$ be a matrix which has $1$ at the $i-$th place and zeroes at all other places.  From Lemma \ref{lem:redukcja_t_modul_i_Drinfeld} we know that 
	$$\Ext^1_\tau(\Phi, \psi)\cong 
	\Big(K\{\tau\}_{<\rk\Phi}\Big)^{\oplus d}.$$
	Thus in order to transfer the $\F_q[t]-$module structure to the space \linebreak $\Big(K\{\tau\}_{<\rk\Phi}\Big)^{\oplus d}$ 
	it suffices to find multiplication by  $t$ on the generators of the form  $c\tau^kE_i$, where $0\leq k<\rk \Phi$ and $i=1,2,\dots, d$.  If a degree of  $t*c_k\tau^k$ is bigger than $\rk\Phi$ then appropriate terms, starting from the term with the highest power of  $\tau,$ can be reduced 
	by means of the inner biderivations of the form  $\delta^{(c\tau^kE_i)}$. 
	After each reduction at every coordinate we obtain a polynomial satisfying conditions
	 $(i)-(iv)$ of the proof of Proposition  \ref{prop:ext_as_t_module}. 
	 In this way after the following choice of basis: 
	$$(\tau^kE_1)_{k=0}^{n-1}, (\tau^kE_2)_{k=0}^{n-1},\dots, (\tau^kE_d)_{k=0}^{n-1}$$
	we see that $\Ext^1_{\tau}(\Phi,\psi)$ is a \tm module with a zero nilpotent matrix.  
	\\
	It is also worth noting that reducing by the inner biderivations 
	$\delta^{(c\tau^k E_i)}$ for any $i$  we cannot obtain a nonzero term at $\tau^0E_{l}$ if $\delta^{(c\tau^0E_l)}\in\Der_0(\Phi,\psi)$. This means that if  $\delta^{(c\tau^0E_l)}\in\Der_0(\Phi,\psi)$ then the row corresponding to the coordinate $\tau^0E_l$ in the matrix  $\Pi_t$ is of the form
\begin{equation}\label{star}
\quad\Big[0,\dots,0,\theta,0\dots, 0 \Big],
\end{equation}
	where the only element $\theta$ corresponds to the coordinate $\tau^0E_l$.
	\\
	 For the proof of $(ii)$ recall that there exists a canonical embedding \linebreak
	  $\Ext_{0,\tau}(\Phi,\psi)\lra \Ext^1_{\tau}(\Phi,\psi)$ defined on the level of biderivations.
	  This map is given by a matrix of twisted polynomials in $K\{\tau\}$ such that the row corresponding to the coordinate
	   $\tau^0E_i$ is zero iff the inner biderivation 
	     $\delta^{(c\tau^0E_i)}\in\Der_0(\Phi,\psi)$. Let $s$ be the number of inner biderivations  
	     of the form $\delta^{(c\tau^0E_i)}$ belonging to $\Der_0(\Phi,\psi)$.
	Consider  a map $g: \Ext_{\tau}^1(\Phi,\psi)\lra \G_a^{s}$ given by a row matrix where the only nonzero elements are 
	equal to $1$ at places corresponding to the coordinates for which
	 $\delta^{(c\tau^0E_i)}\in\Der_0(\Phi,\psi)$. 
	 In order to show that $g$ is a morphism of \tm modules one has to check the equality $g\cdot \Pi_t=\theta \cdot g.$
	 This follows from the fact that the row of  $\Pi_t$ corresponding to the coordinate $\tau^0E_l$ is of the form (\ref{star}) if $\delta^{(c\tau^0E_l)}\in\Der_0(\Phi,\psi)$.
	Thus we obtain the following exact sequence of \tm modules. 
	$$0\lra \Ext_{0,\tau}(\Phi,\psi)\lra \Ext^1_{\tau}(\Phi,\psi)\lra \G_a^{s}\lra0.$$
\end{proof}

The case where  $\Phi$ is a  \tm module such that   ${\Phi}_t $ has an invertible matrix at $\tau^{\rk\Phi}$ 
can be derived from the Proposition \ref{propI}.
 We have the following: 
\begin{thm}\label{thm:Ext_dla_t_modul_i_Drinfeld}
	Let $\Phi_t=(\theta I+ N_{\Phi})\tau^0+\sum\limits_{j=1}^n A_j\tau^j $ be a  \tm module of dimension $d$, where $A_n$ is an invertible matrix  and let  $\psi_t=\theta +\sum\limits_{j=1}^mb_j\tau^j$ be a Drinfeld module. If $\rk \Phi>\rk\psi$ then 
	\begin{itemize}
		\item[$(i)$] $\Ext^1_{\tau}(\Phi,\psi)$ has a natural structure of a \tm module,
		\item[$(ii)$] there exists a short exact sequence of  \tm modules
		$$0\lra \Ext_{0,\tau}(\Phi,\psi)\lra \Ext^1_{\tau}(\Phi,\psi)\lra \G_a^{s}\lra0,$$
		where $s$ is the number of zero rows in the matrix $A_n^{-1}N_{\Phi}$.  
	\end{itemize}
\end{thm}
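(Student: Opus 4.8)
The plan is to reduce the statement to Proposition~\ref{propI} by replacing the standard generating set of $\Derin(\Phi,\psi)$ with one adapted to the invertible matrix $A_n$. Recall that $\Der(\Phi,\psi)\cong{\mathrm{Mat}}_{1\times d}(K\{\tau\})$ via $\delta\mapsto\delta(t)$, and that $\Derin(\Phi,\psi)$ is generated by the inner biderivations $\delta^{(c\tau^kE_iA_n^{-1})}$, $c\in K$, $k\geq0$, $1\leq i\leq d$; this is still a generating set because $A_n$ is invertible, so $E_1A_n^{-1},\dots,E_dA_n^{-1}$ is again a $K$-basis of ${\mathrm{Mat}}_{1\times d}(K)$. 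First I would record the two computations that make everything reduce to the case $A_n=I$: the top-degree term of $\delta^{(c\tau^kE_iA_n^{-1})}(t)$ comes from $c\tau^k(E_iA_n^{-1})\cdot A_n\tau^n=c\,(E_iA_n^{-1}A_n)^{(k)}\tau^{k+n}=cE_i^{(k)}\tau^{k+n}=cE_i\tau^{k+n}$ (using $E_i^{(k)}=E_i$), so it is supported in the $i$-th coordinate only, exactly as in the situation of Proposition~\ref{propI}; and the constant term of $\delta^{(cE_iA_n^{-1})}(t)$ equals $c\,E_iA_n^{-1}N_\Phi$, i.e.\ $c$ times the $i$-th row of $A_n^{-1}N_\Phi$, which plays the role that $cE_iN_\Phi$ played in \eqref{eq:biderywacja_wewnetrzna_t-mod_i_Drinfeld}.

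With these in hand, the proofs of Lemma~\ref{lem:redukcja_t_modul_i_Drinfeld} and Proposition~\ref{propI} carry over essentially verbatim, with $N_\Phi$ replaced throughout by $A_n^{-1}N_\Phi$. \textbf{Step 1 (vector space structure).} Reducing an arbitrary biderivation by the $\delta^{(c\tau^kE_iA_n^{-1})}$ coordinate by coordinate, as in Lemma~\ref{lemma_iso_Ext} and Lemma~\ref{lem:redukcja_t_modul_i_Drinfeld}, gives $\Ext^1_\tau(\Phi,\psi)\cong\bigl(K\{\tau\}_{<\rk\Phi}\bigr)^{\oplus d}$ as $\F_q$-vector spaces; moreover $\delta^{(cE_iA_n^{-1})}\in\Der_0(\Phi,\psi)$ iff the $i$-th row of $A_n^{-1}N_\Phi$ vanishes, so $\Ext_{0,\tau}(\Phi,\psi)\cong\bigoplus_{i=1}^d K\{\tau\}_{[1,r_i]}$ with $r_i=\rk\Phi-1$ if that row is zero and $r_i=\rk\Phi$ otherwise. \textbf{Step 2 (the \tm module structure, part (i)).} Transport the $\F_q[t]$-action by computing $t*(c\tau^kE_i)=\psi_t\,c\tau^kE_i$ and reducing. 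Every reduction step subtracts a generator whose cancelling coefficient, as a function of $c$, has no free term (it carries a positive Frobenius twist of $c$), so all terms it introduces are skew-monomials in $c$ without free term, while the surviving summand $c\theta\tau^kE_i$ supplies the only free term, namely $\theta$ in the diagonal position. Hence the structure matrix $\Pi_t$ has the shape $\theta I+(\text{terms of positive }\tau\text{-degree})$ — in particular its nilpotent part is $0$ — and defines a \tm module structure on $\Ext^1_\tau(\Phi,\psi)$. \textbf{Step 3 (the exact sequence, part (ii)).} As in Proposition~\ref{propI}(ii), the canonical embedding $\Ext_{0,\tau}(\Phi,\psi)\hookrightarrow\Ext^1_\tau(\Phi,\psi)$ is given by a matrix over $K\{\tau\}$ whose row indexed by the coordinate $\tau^0E_l$ vanishes exactly when $\delta^{(cE_lA_n^{-1})}\in\Der_0(\Phi,\psi)$, i.e.\ when the $l$-th row of $A_n^{-1}N_\Phi$ is zero; by the free-term analysis of Step 2, the corresponding row of $\Pi_t$ is then of the form \eqref{star}. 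Counting coordinates, the cokernel of this embedding is $\G_a^{s}$ with $s$ the number of zero rows of $A_n^{-1}N_\Phi$, and one checks that the quotient map $\Ext^1_\tau(\Phi,\psi)\lra\G_a^s$ is a morphism of \tm modules via $g\cdot\Pi_t=\theta\cdot g$, exactly as there.

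The main obstacle is the first of the two computations above: one must verify that switching from the generators $\delta^{(c\tau^kE_i)}$ to $\delta^{(c\tau^kE_iA_n^{-1})}$ really does restore the coordinatewise shape of the leading terms, which amounts to checking that the Frobenius twist does not disturb the identity $(E_iA_n^{-1})A_n=E_i$ — it does not, since $A_n\in{\mathrm{Mat}}_d(K)$ is a constant matrix, so $\tau^k$-conjugation acts on $A_n^{-1}$ and on $A_n$ by the same power of Frobenius. Two further points deserve attention. First, one must check that the reduction still produces, at each coordinate and each stage, a coefficient expressible as the evaluation at $c$ of one fixed skew polynomial (conditions (i)--(iv) in the proof of Proposition~\ref{prop:ext_as_t_module}); this holds because each cancelling coefficient depends $\F_q$-semilinearly on $c$, so the recursion is uniform in $c$, just as when $A_n=I$. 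Second, note that $A_n^{-1}N_\Phi$ need not itself be nilpotent, so the vanishing of the nilpotent part of $\Pi_t$ claimed in Step 2 is not automatic and genuinely relies on the free-term bookkeeping; this is also why the theorem does not follow from Proposition~\ref{propI} by a naive change of coordinates.
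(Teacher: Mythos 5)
Your proposal is correct and follows essentially the same route as the paper: it replaces the standard generators by the inner biderivations $\delta^{(c\tau^kE_iA_n^{-1})}$, verifies that their leading terms are concentrated in the $i$-th coordinate (so the reduction of Lemma \ref{lem:redukcja_t_modul_i_Drinfeld} and Proposition \ref{propI} carries over), and computes the constant term $c\,E_iA_n^{-1}N_{\Phi}$ to characterize membership in $\Der_0(\Phi,\psi)$, giving $s$ as the number of zero rows of $A_n^{-1}N_{\Phi}$. The only difference is that you spell out some bookkeeping (the Frobenius-twist compatibility $(E_iA_n^{-1})^{(k)}A_n^{(k)}=E_i$ and the uniformity of the recursion in $c$) that the paper leaves implicit.
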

\begin{proof}

In the current situation, for the reduction process, we use different inner biderivations than these used in the proof of  Proposition \ref{propI}  .
Let $c\tau^kE_iA_n^{-1} \in \mathrm{Mat}_{1\times d}(K\{\tau\}).$   We see that  the inner biderivations  $\delta^{(c\tau^kE_iA_n^{-1} )}$  again have the property that a polynomial at the $i$-th coordinate  has degree  $n+k$ 
and  polynomials at all other coordinates  have degrees less than $n+k.$  
Analogous  reasoning to that of   Proposition \ref{propI} completes the proof of $(i)$. \\
	For the proof of  $(ii)$ we have to check that the inner  biderivation  $\delta^{(c\tau^0E_iA_n^{-1} )}$ belongs to  $\Der_0(\Phi, \psi)$. To do this we compute a constant term of this inner biderivation: 
		\begin{align*}
		c_0&\tau^0E_iA_n^{-1}(\theta I+N_{\Phi})-\theta c_0\tau^0E_iA_n^{-1}=\\
		=&c_0\tau^0E_iA_n^{-1}\theta I+c_0\tau^0E_iA_n^{-1}N_{\Phi}-\theta c_0\tau^0E_iA_n^{-1}=\\
		=&c_0\tau^0\theta IE_iA_n^{-1}-\theta c_0\tau^0E_iA_n^{-1} +c_0\tau^0E_iA_n^{-1}N_{\Phi}=c_0\tau^0E_iA_n^{-1}N_{\Phi}\\
	\end{align*}
Therefore, $\delta^{(c\tau^0E_iA_n^{-1} )}\in\Der_0(\Phi, \psi) $ iff the $i-$th row of the matrix  $A_n^{-1}N_{\Phi}$ is zero. 
Similar reasoning as in the proof of Proposition \ref{propI} yields the result.

\end{proof}

Just as in Theorem \ref{thm:product_ext}, one can show the following:
\begin{thm}\label{gen2}
	Let $\Phi_t=(\theta I+ N_{\Phi})\tau^0+\sum\limits_{j=1}^n A_j\tau^j $ be a \tm module of dimension $d$, where $A_n$ 
	is an invertible matrix. Let   $\psi_i$  $i=1,\dots, m$ be Drinfeld modules. If $\rk \Phi>\rk\psi_i$ for $i=1,\dots,m$ then 
	\begin{itemize}
		\item[$(i)$] $\Ext^1_{\tau}(\Phi,\prod_{i=1}^{m}\psi_i)$ has a natural  structure of a \tm module,
		\item[$(ii)$] there exists a short exact sequence of \tm modules
		$$0\lra \Ext_{0,\tau}(\Phi,\prod_{i=1}^m\psi_i)\lra \Ext^1_{\tau}(\Phi,\prod_{i=1}^m\psi_i)\lra \G_a^{m\cdot s}\lra0,$$
		where $s$ is the number of zero rows of the matrix $A_n^{-1}N_{\Phi}$.  
	\end{itemize}
\end{thm}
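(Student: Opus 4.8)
The plan is to reduce everything to the single–Drinfeld-module case of Theorem \ref{thm:Ext_dla_t_modul_i_Drinfeld} by exploiting the block-diagonal shape of $\big(\prod_{i=1}^m\psi_i\big)(t)$, exactly in the spirit of the proof of Theorem \ref{thm:product_ext}. First I would record the $\F_q$-linear decomposition. A biderivation $\delta\in\Der\big(\Phi,\prod_{i=1}^m\psi_i\big)$ is a matrix $\delta(t)\in\mathrm{Mat}_{m\times d}(K\{\tau\})$, and since $\big(\prod_{i=1}^m\psi_i\big)(t)=\diag(\psi_1(t),\dots,\psi_m(t))$, the biderivation equation \eqref{delta} decouples row by row: the $i$-th row of $\delta(t)$ is a biderivation in $\Der(\Phi,\psi_i)$. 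The same decoupling applies to inner biderivations, because $\delta^{(U)}(t)=U\Phi_t-\big(\prod\psi_i\big)_tU$ has $i$-th row $U_i\Phi_t-\psi_i(t)U_i$, which is the inner biderivation attached to the $i$-th row $U_i$ of $U$. Hence, taking rows,
$$\Der\Big(\Phi,\prod_{i=1}^m\psi_i\Big)\cong\bigoplus_{i=1}^m\Der(\Phi,\psi_i),\qquad \Derin\Big(\Phi,\prod_{i=1}^m\psi_i\Big)\cong\bigoplus_{i=1}^m\Derin(\Phi,\psi_i),$$
and therefore $\Ext^1_\tau\big(\Phi,\prod_{i=1}^m\psi_i\big)\cong\bigoplus_{i=1}^m\Ext^1_\tau(\Phi,\psi_i)$ as $\F_q$-vector spaces; restricting to biderivations with vanishing constant term (a property that holds for a matrix iff it holds for each of its rows) gives the analogous decomposition of $\Ext_{0,\tau}$.

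Next I would check that this decomposition respects the $\F_q[t]$-action. The action is $a*\delta=\big(\prod\psi_i\big)(a)\,\delta(t)$, and left-multiplying the matrix $\delta(t)$ by $\diag(\psi_1(a),\dots,\psi_m(a))$ multiplies its $i$-th row by $\psi_i(a)$, which is precisely the $\F_q[t]$-action on $\Der(\Phi,\psi_i)$ used in Theorem \ref{thm:Ext_dla_t_modul_i_Drinfeld}. So the displayed isomorphism is one of $\F_q[t]$-modules. By Theorem \ref{thm:Ext_dla_t_modul_i_Drinfeld}(i), for each $i$ the summand $\Ext^1_\tau(\Phi,\psi_i)$ carries a $\mathbf t$-module structure given, in a basis of the form $(\tau^kE_l)$, by a matrix $\Pi^{(i)}_t\in\mathrm{Mat}_{d\cdot\rk\Phi}(K\{\tau\})$ whose nilpotent part is zero (this is part of the conclusion of Proposition \ref{propI} and its extension in Theorem \ref{thm:Ext_dla_t_modul_i_Drinfeld}). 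Concatenating the bases of the summands, multiplication by $t$ on $\bigoplus_i\Ext^1_\tau(\Phi,\psi_i)$ is given by the block-diagonal matrix $\Pi_t=\diag\big(\Pi^{(1)}_t,\dots,\Pi^{(m)}_t\big)$, which again has constant term $\theta I$ (so its nilpotent part vanishes) plus higher powers of $\tau$; hence it defines a $\mathbf t$-module structure on $\Ext^1_\tau\big(\Phi,\prod_{i=1}^m\psi_i\big)$, proving $(i)$.

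For $(ii)$ I would observe that the integer $s$ of Theorem \ref{thm:Ext_dla_t_modul_i_Drinfeld}(ii) — the number of zero rows of $A_n^{-1}N_\Phi$ — depends only on $\Phi$, hence is the same for every $\psi_i$. Thus for each $i$ there is a short exact sequence of $\mathbf t$-modules $0\lra\Ext_{0,\tau}(\Phi,\psi_i)\lra\Ext^1_\tau(\Phi,\psi_i)\lra\G_a^{s}\lra0$, and taking the direct sum over $i=1,\dots,m$ (which is exact, since exactness of such a sequence means exactness of the underlying sequence of $\F_q[t]$-modules / commutative algebraic groups, a property stable under finite direct sums) yields
$$0\lra\Ext_{0,\tau}\Big(\Phi,\prod_{i=1}^m\psi_i\Big)\lra\Ext^1_\tau\Big(\Phi,\prod_{i=1}^m\psi_i\Big)\lra\G_a^{m\cdot s}\lra0,$$
after identifying the outer terms via the decompositions of the first paragraph. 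The only genuinely delicate point is the one already handled in Theorem \ref{thm:Ext_dla_t_modul_i_Drinfeld}: that reducing a biderivation modulo the inner ones $\delta^{(c\tau^kE_lA_n^{-1})}$ produces coefficients that are honest $\tau$-polynomial evaluations at the free parameters and leaves no constant term off the distinguished diagonal entries (conditions $(i)$–$(iv)$ in the proof of Proposition \ref{propI}). Everything else is the routine bookkeeping of passing from a single Drinfeld module to a finite product, exactly as in the proof of Theorem \ref{thm:product_ext}.
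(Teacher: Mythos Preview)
Your proposal is correct and follows exactly the approach the paper indicates: the paper gives no explicit proof for this theorem, merely prefacing it with ``Just as in Theorem \ref{thm:product_ext}, one can show the following,'' and your argument carries out precisely that program---decomposing row-wise via the block-diagonal shape of $\big(\prod\psi_i\big)(t)$, applying Theorem \ref{thm:Ext_dla_t_modul_i_Drinfeld} to each summand $\Ext^1_\tau(\Phi,\psi_i)$, and assembling the block-diagonal $\mathbf t$-module structure, then summing the short exact sequences for part $(ii)$.
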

One can also show ''\tsm version'' of Theorems  \ref{thm:Ext_dla_t_modul_i_Drinfeld} and \ref{gen2}.
We state a more general  result for the product of Drinfeld modules:
\begin{thm}
	Let $K$ be a perfect field. Let $\Psi_t=(\theta I+ N_{\Psi})\tau^0+\sum\limits_{j=1}^n B_j\tau^j $ be a \tm module of dimension $d$, where $B_n$ is an invertible matrix  and let  $\prod_{i=1}^m\phi_i$ be a product of  Drinfeld modules. If $\rk \Psi>\rk\phi_i$ for $i=1,\dots,m$ then 
	there exists a short exact sequence of \tsm modules:
	$$0\lra \Ext_{0,\tau}(\prod_{i=1}^m\phi_i,\Psi)\lra \Ext^1_{\tau}(\prod_{i=1}^m\phi_i,\Psi)\lra \G_a^{m\cdot s}\lra0,$$
	where $s$ is the number of nonzero rows of the matrix $A_n^{-1}N_{\Psi}$. 
\end{thm}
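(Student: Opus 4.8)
The strategy is to deduce this from the $\mathbf{t}$-module results of Section~\ref{Phipsi} by applying the duality of Theorem~\ref{Duality}, in the same spirit in which Lemma~\ref{lemma:adjoint_t_module} reduces the case $\rk\phi<\rk\psi$ for Drinfeld modules to the case $\rk\phi>\rk\psi$. Since the matrix of $\prod_{i=1}^m\phi_i$ at $t$ is block diagonal, $\rk\big(\prod_{i=1}^m\phi_i\big)=\max_i\rk\phi_i<n=\rk\Psi$, so $\Ext^1_\tau\big(\prod_{i=1}^m\phi_i,\Psi\big)$ lies in the regime where the left-hand argument has strictly smaller rank than $\Psi$; as explained in Section~\ref{general}, no $\mathbf{t}$-module structure is available on it a priori, and the assertion is that, once transported through duality, it becomes a $\mathbf{t}^{\sigma}$-module fitting in the stated short exact sequence.

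First I would apply Theorem~\ref{Duality} with $\Phi=\prod_{i=1}^m\phi_i$ and $\Psi$ as given, which produces an $\F_q[t]$-module isomorphism
$$\Ext^1_\tau\Big(\prod_{i=1}^m\phi_i,\Psi\Big)\ \cong\ \Ext^1_\sigma\Big(\Psi^{\sigma},\prod_{i=1}^m\phi_i^{\sigma}\Big),$$
and, since $(-)^{\sigma}$ preserves the vanishing of the constant term $d\delta(t)$, a compatible isomorphism $\Ext_{0,\tau}\big(\prod_i\phi_i,\Psi\big)\cong\Ext_{0,\sigma}\big(\Psi^{\sigma},\prod_i\phi_i^{\sigma}\big)$. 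Because $(-)^{\sigma}$ in \eqref{ad} transposes and acts entrywise, the adjoint of a block-diagonal $t$-module is block diagonal, so $\big(\prod_i\phi_i\big)^{\sigma}=\prod_i\phi_i^{\sigma}$ is a product of Drinfeld $\mathbf{t}^{\sigma}$-modules with $\rk\phi_i^{\sigma}=\rk\phi_i$. Writing $\Psi_t=(\theta I+N_\Psi)\tau^0+\sum_{j=1}^nB_j\tau^j$, the same formula gives $\Psi^{\sigma}_t=(\theta I+N_\Psi^{T})\sigma^0+\sum_{j=1}^n\big(B_j^{T}\big)^{(-j)}\sigma^j$; as $K$ is perfect the twist $(-)^{(-n)}$ is defined and $\big(B_n^{T}\big)^{(-n)}$ is invertible, while the $\sigma^0$-term differs from $\theta I$ by the nilpotent matrix $N_\Psi^{T}$. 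Hence $\Psi^{\sigma}$ is a $\mathbf{t}^{\sigma}$-module of rank $n>\rk\phi_i^{\sigma}$ with invertible leading coefficient, i.e. exactly the data to which the $\mathbf{t}^{\sigma}$-analogue of Theorem~\ref{gen2} applies.

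That analogue would be obtained by copying the proofs of Lemma~\ref{lem:redukcja_t_modul_i_Drinfeld}, Proposition~\ref{propI}, Theorem~\ref{thm:Ext_dla_t_modul_i_Drinfeld} and Theorem~\ref{gen2}, replacing $\tau$ by $\sigma$ and each Frobenius twist $(-)^{(i)}$ by $(-)^{(-i)}$: over the perfect field $K$ the ring $K\{\sigma\}$ has the same formal behaviour as $K\{\tau\}$ (cf.\ \eqref{maps1}--\eqref{maps2}), reduction of biderivations by the inner biderivations adapted to the invertible leading coefficient of $\Psi^{\sigma}_t$ (as in the proof of Theorem~\ref{thm:Ext_dla_t_modul_i_Drinfeld}) lowers the $\sigma$-degree coordinate by coordinate, and the bookkeeping conditions $(i)$--$(iv)$ from the proof of Proposition~\ref{prop:ext_as_t_module} are preserved. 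This gives that $\Ext^1_\sigma\big(\Psi^{\sigma},\prod_i\phi_i^{\sigma}\big)$ is a $\mathbf{t}^{\sigma}$-module, that $\Ext_{0,\sigma}\big(\Psi^{\sigma},\prod_i\phi_i^{\sigma}\big)$ is a sub-$\mathbf{t}^{\sigma}$-module realised by the inclusion of the coordinates of $\sigma$-degree $\ge 1$, and that the cokernel is $\G_a^{m\cdot s}$ with $s$ the number of zero rows of $\big((B_n^{T})^{(-n)}\big)^{-1}N_\Psi^{T}$; since Frobenius twists preserve vanishing of matrix entries, this integer is the one described in the statement after unwinding the transpose in the adjoint \eqref{ad}.

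Finally I would pull back the short exact sequence of $\mathbf{t}^{\sigma}$-modules just obtained along the $\F_q[t]$-module isomorphisms of the previous step; this endows $\Ext^1_\tau\big(\prod_i\phi_i,\Psi\big)$ and $\Ext_{0,\tau}\big(\prod_i\phi_i,\Psi\big)$ with $\mathbf{t}^{\sigma}$-module structures and yields the asserted exact sequence
$$0\lra \Ext_{0,\tau}\Big(\prod_{i=1}^m\phi_i,\Psi\Big)\lra \Ext^1_\tau\Big(\prod_{i=1}^m\phi_i,\Psi\Big)\lra \G_a^{m\cdot s}\lra 0.$$
I expect the only genuine difficulty to be organisational rather than conceptual: one must write out cleanly the $\mathbf{t}^{\sigma}$-versions of the reduction arguments of Section~\ref{Phipsi} — in particular the inner biderivations built from the invertible leading matrix, as in Theorem~\ref{thm:Ext_dla_t_modul_i_Drinfeld} — and track carefully how the transpose in \eqref{ad} reshapes the nilpotent-versus-leading-coefficient data of $\Psi$ into the number $s$; everything else is a formal consequence of Theorem~\ref{Duality}.
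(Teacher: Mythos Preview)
Your proposal is correct and follows precisely the route the paper indicates: the paper gives no detailed proof for this theorem, only the sentence ``One can also show `$\mathbf{t}^{\sigma}$-version' of Theorems~\ref{thm:Ext_dla_t_modul_i_Drinfeld} and~\ref{gen2}'' before stating it, and your argument via Theorem~\ref{Duality} together with the $\sigma$-analogue of Theorem~\ref{gen2} is exactly what is intended. Your caution about tracking the transpose in \eqref{ad} to recover the integer $s$ is well placed --- note that the statement as printed has $A_n$ where $B_n$ is meant, and the switch from ``zero rows'' in Theorem~\ref{gen2} to ``nonzero rows'' here should indeed be checked carefully against the transpose, since zero rows of $(B_n^{T})^{-1}N_\Psi^{T}=(N_\Psi B_n^{-1})^{T}$ correspond to zero \emph{columns} of $N_\Psi B_n^{-1}$ rather than rows of $B_n^{-1}N_\Psi$.
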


\section{$\Ext^1_\tau(\Phi, C^{\otimes e})$ for   \tm module $\Phi$}\label{Carlitz}

In this section, we show that the method of reductions by means of inner biderivations used earlier allows one to determine 
the space of extensions for  a \tm module $\Phi$ such that $\Phi_t$ has an invertible matrix at
 $\tau^{\rk\Phi}$ and $C^{\otimes e}$ is the $e-$th tensor of a  Carlitz module.\\
Recall from \cite{AT90} that $C^{\otimes e}=\theta I_e+N_e+E_{e\times 1}\tau$ where
\begin{align*}
	I_e&=\left[\begin{array}{cccc}
		1& 0 & \cdots & 0\\
		0& 1 & \cdots & 0\\
		\vdots& \vdots & \ddots & \vdots\\
		0& 0 & \cdots & 1
	\end{array} \right],\quad
	N_e=\left[\begin{array}{ccccc}
		0& 1 & 0&\cdots & 0\\
		0& 0 & 1& \cdots & 0\\
		\vdots& \vdots &\vdots & \ddots & \vdots\\
		0& 0 &0  &\cdots & 1\\
		0& 0 &0  &\cdots & 0
	\end{array} \right],\\
E_{e\times 1}&=\left[\begin{array}{cccc}
	0& 0 & \cdots & 0\\
	\vdots& \vdots & \ddots & \vdots\\
	0& 0 & \cdots & 0\\
	1& 0 & \cdots & 0 \end{array} \right]\in\mathrm{Mat}_{e}(K). 
\end{align*}
\begin{lem}\label{lem:ext_t_mod_i_tensor_Carlitza}
	Let $\Phi_t=(\theta I+ N_{\Phi})\tau^0+\sum\limits_{j=1}^n A_j\tau^j $ be a \tm module of dimension $d$, such 
	that
	 $A_n$ is an invertible matrix and let   $C^{\otimes e}$ be the $e-$th tensor of the Carlitz module.  If $\rk \Phi\geq 2$ then we have the following  isomorphisms of $\F_q-$vector spaces:
\begin{itemize}
	\item[(i)] $\Ext^1_\tau(\Phi, C^{\otimes e})\cong  \mathrm{Mat}_{e\times d}\Big(K\{\tau\}_{<\rk\Phi}\Big),$  
	\item[(ii)] $\Ext_{0,\tau}(\Phi, C^{\otimes e})\cong \left[\begin{array}{cccc}
		K\{\tau\}_{ [1,r_{1}] } & K\{\tau\}_{ [1,r_{2}] }& \cdots & K\{\tau\}_{ [1,r_{d}]}\\
		K\{\tau\}_{ [1,\rk\Phi] } & K\{\tau\}_{ [1,\rk\Phi] }& \cdots & K\{\tau\}_{ [1,\rk\Phi]}\\
		\vdots & \vdots & \ddots & \vdots \\
		K\{\tau\}_{ [1,\rk\Phi] } & K\{\tau\}_{ [1,\rk\Phi] }& \cdots & K\{\tau\}_{ [1,\rk\Phi]}\\
	\end{array} \right]$, \\
	
	where
	$r_{j}=\rk\Phi-1$ if the  $j-$th row of the matrix $A_n^{-1}N_{\Phi}$ is zero and $r_j=\rk\Phi$ otherwise.  
\end{itemize}
\end{lem}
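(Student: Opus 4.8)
The plan is to adapt, essentially verbatim, the reduction arguments of Lemma~\ref{lemma_iso_Ext}, Lemma~\ref{lem:redukcja_t_modul_i_Drinfeld} and Theorem~\ref{thm:Ext_dla_t_modul_i_Drinfeld}, the only genuinely new ingredient being the nilpotent block $N_e$ occurring in $(C^{\otimes e})_t=\theta I_e+N_e+E_{e\times 1}\tau$. Throughout put $n=\rk\Phi\geq 2$.

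First I would identify $\Der(\Phi,C^{\otimes e})$ with $\mathrm{Mat}_{e\times d}(K\{\tau\})$ via $\delta\mapsto\delta(t)$ and compute the inner biderivations attached to the matrices $c\tau^k E_{l\times j}A_n^{-1}$, where $E_{l\times j}\in\mathrm{Mat}_{e\times d}(K)$ has a single $1$ in position $(l,j)$. Since $A_n$ is invertible, the $(l,j)$-entry of $\bigl(c\tau^k E_{l\times j}A_n^{-1}\bigr)\Phi_t$ is $c\tau^{k+n}$ plus terms of $\tau$-degree $<k+n$, while $(C^{\otimes e})_t\bigl(c\tau^k E_{l\times j}A_n^{-1}\bigr)$ has $\tau$-degree at most $k+1$; because $n\geq 2$ we have $k+1<k+n$, so $\delta^{(c\tau^k E_{l\times j}A_n^{-1})}$ has its unique highest-degree term, of degree $k+n$, in the $(l,j)$-slot and $\tau$-degree $<k+n$ in every other slot. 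Running the downward induction on the largest $\tau$-degree occurring in $\delta(t)$, exactly as in Lemma~\ref{lemma_iso_Ext}, every coset of $\Der(\Phi,C^{\otimes e})/\Derin(\Phi,C^{\otimes e})$ acquires a representative all of whose entries have $\tau$-degree $<n$; distinctness of such representatives follows because for $U\neq0$ the coefficient of $\tau^{\deg_\tau U+n}$ in $\delta^{(U)}(t)$ equals $U_{\mathrm{top}}A_n^{(\deg_\tau U)}\neq0$. This gives $(i)$.

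For $(ii)$ I would rerun the reduction inside $\Der_0(\Phi,C^{\otimes e})$, keeping track of which inner biderivations remain in $\Der_0$. The constant term of $\delta^{(U)}(t)$ is $U_0N_\Phi-N_eU_0$ (the $\theta I$-parts cancel), so $\delta^{(c\tau^k E_{l\times j}A_n^{-1})}\in\Der_0$ automatically once $k\geq1$; this already reduces every entry to $\tau$-degree $\leq n$ without leaving $\Der_0$. For the surviving degree-$n$ terms one must use the $k=0$ generators $\delta^{(cE_{l\times j}A_n^{-1})}$, whose constant term is $c\,E_{l\times j}A_n^{-1}N_\Phi-c\,N_eE_{l\times j}A_n^{-1}$. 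When $l\geq2$ the term $N_eE_{l\times j}A_n^{-1}$ places the $j$-th row of $A_n^{-1}$ — which is nonzero, as $A_n$ is invertible — into row $l-1$, so such a generator is never in $\Der_0$; when $l=1$ the $N_e$-contribution vanishes and the constant term is the $j$-th row of $A_n^{-1}N_\Phi$ sitting in row $1$, which is zero exactly when that row of $A_n^{-1}N_\Phi$ vanishes. Hence, staying inside $\Der_0$, a degree-$n$ term is removable precisely when it lies in row $1$ at a column $j$ with $(A_n^{-1}N_\Phi)_{j,\cdot}=0$; this produces the stated matrix of spaces, with $r_j=n-1$ in that case and $r_j=n$ otherwise, while every degree-$n$ term in rows $2,\dots,e$ survives. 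Reading off the reduced matrices then identifies $\Ext_{0,\tau}(\Phi,C^{\otimes e})$ with the matrix in $(ii)$.

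The step I expect to be the main obstacle is exactly the bookkeeping forced by the coupling of the two nilpotents in $U_0N_\Phi-N_eU_0$: in contrast with the purely Drinfeld target of Theorem~\ref{thm:Ext_dla_t_modul_i_Drinfeld}, the $\Der_0$-condition now ties together consecutive rows of the biderivation matrix through $N_e$. Consequently one must check carefully that in rows $2,\dots,e$ the degree-$n$ freedom recorded by the $K\{\tau\}_{[1,n]}$-entries is genuine — equivalently, that no inner biderivation lying in $\Der_0$ (and $\Derin\cap\Der_0$ is strictly larger than the span of the elementary $\delta^{(cE_{l\times j}A_n^{-1})}$, since constant terms may cancel only in combinations) is already in reduced form. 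Verifying that the reduced matrices are linearly independent modulo $\Derin\cap\Der_0$, so that one obtains an isomorphism and not merely a surjection, is the delicate point; the hypotheses that $A_n$ is invertible and $n\geq2$ are precisely what make the elementary generators $\delta^{(c\tau^k E_{l\times j}A_n^{-1})}$ behave triangularly and drive the whole induction.
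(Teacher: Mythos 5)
Your proposal follows the paper's proof of this lemma essentially step for step: the same generators $\delta^{(c\tau^k E_{l\times j}A_n^{-1})}$ of $\Derin(\Phi,C^{\otimes e})$, the same observation that invertibility of $A_n$ together with $\rk\Phi\geq 2$ places the unique top term, of degree $k+\rk\Phi$, in the $(l,j)$-entry, the same downward reduction for part $(i)$ (your explicit injectivity remark via the top coefficient $U_{\mathrm{top}}A_n^{(\deg_\tau U)}\neq 0$ is, if anything, more detailed than the paper's), and for part $(ii)$ the same constant-term computation $U_0N_\Phi-N_eU_0$ with the row-shift action of $N_e$, giving the same trichotomy: generators with $k\geq 1$ lie in $\Der_0$, those with $k=0$ and $l\geq 2$ never do, and those with $k=0$, $l=1$ lie in $\Der_0$ exactly when the $j$-th row of $A_n^{-1}N_\Phi$ vanishes. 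Part $(i)$ is complete and matches the paper.

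The step you single out as delicate in $(ii)$ is, however, a genuine gap in your argument — and it is precisely the step the paper's own proof also passes over: the paper only performs the reduction (every class of $\Der_0/(\Der_0\cap\Derin)$ has a representative of the displayed shape) and never verifies that distinct reduced matrices give distinct classes, i.e.\ that the displayed space meets $\Der_0\cap\Derin$ trivially. Your worry about cancellations in combinations is substantive, not routine bookkeeping: the constant term of $\delta^{(U_0)}$ for constant $U_0$ vanishes exactly when $(U_0)_{i,\cdot}=(U_0)_{1,\cdot}N_\Phi^{\,i-1}$ for all $i$ and $(U_0)_{1,\cdot}N_\Phi^{\,e}=0$, and such $U_0$ can produce nonzero reduced elements. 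For instance, take $d=e=2$ and $\Phi_t=\theta I_2+N+I_2\tau^2$ with $N=\left[\begin{smallmatrix}0&1\\0&0\end{smallmatrix}\right]$; then $U_0=I_2$ satisfies $U_0N=N_2U_0$, so $\delta^{(I_2)}=I_2\tau^2-E_{2\times 1}\tau$ is a nonzero element of $\Der_0\cap\Derin$ which lies in the displayed space (here $r_1=\rk\Phi=2$ because the first row of $A_n^{-1}N_\Phi=N$ is nonzero, and the entries of rows $\geq 2$ may have degree up to $\rk\Phi$). Hence the independence of the reduced forms modulo $\Der_0\cap\Derin$ cannot be taken for granted; closing your gap would require either an extra argument ruling out such configurations (e.g.\ additional hypotheses on $N_\Phi$) or a re-examination of the description in $(ii)$. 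In short: you reproduce the paper's reduction faithfully, but the injectivity half of $(ii)$, which you honestly leave open, is not supplied by the paper either, and the example above shows it is a real issue.
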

\begin{proof}
	Let $E_{i\times j}$ be a matrix with the  one nonzero entry equal to $1$ at the place  $i\times j$. 
	Then a similar calculation to that done in the proof of  the Theorem \ref{thm:Ext_dla_t_modul_i_Drinfeld} 
	shows that the inner biderivation
	 $\delta^{(U)}$ for $U=c_k\tau^kE_{i\times j}A_n^{-1}$  is given by a matrix which as the $i\times j$ entry has
	 a polynomial of degree
	  $k+\rk\Phi$  and at all the other entries polynomials of degrees  less than $k+\rk\Phi$. 
	  Reduction by  biderivations chosen in such a way yields 
	   $(i)$.\\
 For the proof of part (ii) notice that if the inner biderivations $\delta^{(U)}$ for $U=c_0\tau^0E_{i\times j}A_n^{-1}$  
 belong to  $\Der_0(\Phi,C^{\otimes e} )$ then we can use this biderivation in the reduction process and get  $r_{i\times j}=\rk\Phi -1$  and 
	$r_{i\times j}=\rk\Phi$ otherwise. $r_{i\times j}$ denotes here the degree of a polynomial occuring as the $i\times j $
	entry of the reduced matrix. In order to decide whether 
	$\delta^{(U)}\in \Der_0(\Phi,C^{\otimes e} )$ or not one has to compute the constant term of $\delta^{(U)}$ where  $U=c_0\tau^0E_{i\times j}A_n^{-1}.$   This constant term is of the following form: 
		\begin{align*}
		c_0&\tau^0E_{i\times j}A_n^{-1}(\theta I_d+N_{\Phi})- (\theta I_e+N_e)c_0\tau^0E_{i\times j}A_n^{-1}=\\
		=&c_0\theta\tau^0E_{i\times j}A_n^{-1} +c_0\tau^0E_{i\times j}A_n^{-1}N_{\Phi}-c_0\theta\tau^0E_{i\times j}A_n^{-1} - 
		c_0\tau^0N_eE_{i\times j}A_n^{-1}=\\
		=&c_0\tau^0E_{i\times j}A_n^{-1}N_{\Phi}-c_0\tau^0N_eE_{i\times j}A_n^{-1}=
		c_0\Big(E_{i\times j}A_n^{-1}N_{\Phi}-N_eE_{i\times j}A_n^{-1}\Big)\tau^0\\
	\end{align*}
Since $N_e=\sum_{k=2}^e E_{k-1\times k}$ we have 
$$N_eE_{i\times j}=\left\{\begin{array}{ccc}
	0 & \textnormal{if} &i=1\\
	E_{i-1\times j}& \textnormal{if} &i=2,3,\dots, e.\\
\end{array}\right.
$$
Then if  $i=2,3,\dots, e$ we have the equality $N_eE_{i\times j}A_n^{-1}=E_{i-1\times j}A_n^{-1}\neq 0$. This is because as the result of the matrix multiplication $E_{i-1\times j}\cdot A_n^{-1}$ we obtain a matrix in which the  $(i-1)$-th 
row is equal to the $j$-th row of the matrix $A_n$ and all other rows are zero.
Of course all rows of $A_n$ are nonzero.  Therefore one concludes that  $\delta^{(U)}\notin \Der_0(\Phi,C^{\otimes e} )$ for $U=c_0\tau^0E_{i\times j}A_n^{-1}$ and $i=2,3,\dots, e$. For $i=1$ we see that $\delta^{(U)}\in \Der_0(\Phi,C^{\otimes e} )$
 iff the $j-$th row of the matrix $A_n^{-1}N_{\Phi}$ is zero. 
\end{proof}
\begin{thm}\label{crl}
		Let $\Phi_t=(\theta I+ N_{\Phi})\tau^0+\sum\limits_{j=1}^n A_j\tau^j $ be a \tm module of dimension $d$ where $A_n$ is an invertible matrix   and let  $C^{\otimes e}$ be the  $e-$th tensor of the  Carlitz module. If $\rk \Phi\geq 2$, then 
		\begin{itemize}
			\item[$(i)$] $\Ext^1_{\tau}(\Phi,C^{\otimes e})$ has a natural structure of a  \tm module,
			\item[$(ii)$] there exists a short exact sequence of  \tm modules
			$$0\lra \Ext_{0,\tau}(\Phi,C^{\otimes e})\lra \Ext^1_{\tau}(\Phi,C^{\otimes e} )\lra \G_a^{s}\lra0,$$
			where $s$ is the number of nonzero rows of the matrix $A_n^{-1}N_{\Phi}$.  
		\end{itemize}
\end{thm}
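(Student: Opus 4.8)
The plan is to follow the exact template established in the proofs of Proposition \ref{propI} and Theorem \ref{thm:Ext_dla_t_modul_i_Drinfeld}, adapting the reduction-by-inner-biderivations machinery to the codomain $C^{\otimes e}$ rather than a Drinfeld module. First I would invoke Lemma \ref{lem:ext_t_mod_i_tensor_Carlitza}(i) to identify, as an $\F_q$-vector space, $\Ext^1_{\tau}(\Phi,C^{\otimes e})\cong\mathrm{Mat}_{e\times d}(K\{\tau\}_{<\rk\Phi})$, and fix the coordinate basis consisting of the monomials $\tau^k E_{i\times j}$ for $0\le k<\rk\Phi$, $1\le i\le e$, $1\le j\le d$. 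The $\F_q[t]$-module structure is transported from $\Der(\Phi,C^{\otimes e})/\Derin(\Phi,C^{\otimes e})$ via formula \eqref{modstr}, so $t*(c\tau^k E_{i\times j}) = C^{\otimes e}_t\cdot c\tau^k E_{i\times j}$; when this has $\tau$-degree $\ge\rk\Phi$ I would reduce the top-degree term by an inner biderivation $\delta^{(c'\tau^{k'}E_{i'\times j'}A_n^{-1})}$ exactly as in Lemma \ref{lem:ext_t_mod_i_tensor_Carlitza}, which by that lemma's computation is the unique entry carrying the top degree. The key point to verify — the analogue of conditions $(i)$–$(iv)$ in Proposition \ref{prop:ext_as_t_module} — is that after each reduction, every entry of the reduced matrix is an evaluation at the coefficient $c$ of a fixed skew polynomial, with no constant term except in the "diagonal" slot $\tau^0 E_{i\times j}$ where the constant term is exactly $\theta$. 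Since $C^{\otimes e}_t = \theta I_e + N_e + E_{e\times 1}\tau$ contributes the \emph{constant} nilpotent $N_e$, the contribution of $N_e$ to $t*(c\tau^0 E_{i\times j})$ is $c\,N_e E_{i\times j}\tau^0$, which (as computed in Lemma \ref{lem:ext_t_mod_i_tensor_Carlitza}) shifts row $i$ to row $i-1$ and is absorbed into the reduced form only when $i=1$; this is precisely what controls the nilpotent-matrix bookkeeping. Collecting these facts, writing the resulting multiplication matrix $\Pi_t$ as $(\theta I + N)\tau^0 + \sum A_k\tau^k$, conditions (iii)–(iv) force $N=0$ in the diagonal slots and the relevant off-diagonal structure, establishing that $\Pi_t$ has the Anderson form \eqref{tmod1}, hence $(i)$.

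For part $(ii)$ I would follow the argument of Proposition \ref{propI}(ii) verbatim in structure. By Lemma \ref{lem:ext_t_mod_i_tensor_Carlitza}(ii), $\Ext_{0,\tau}(\Phi,C^{\otimes e})$ is the submodule cut out by demanding zero constant term in each matrix entry, and the canonical embedding $\Ext_{0,\tau}(\Phi,C^{\otimes e})\hookrightarrow\Ext^1_{\tau}(\Phi,C^{\otimes e})$ is realized by a matrix of twisted polynomials whose row indexed by $\tau^0 E_{i\times j}$ is zero precisely when $\delta^{(c\tau^0 E_{i\times j}A_n^{-1})}\in\Der_0(\Phi,C^{\otimes e})$. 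The computation in Lemma \ref{lem:ext_t_mod_i_tensor_Carlitza} shows this happens exactly for $i=1$ and those $j$ for which the $j$-th row of $A_n^{-1}N_{\Phi}$ vanishes; hence the cokernel of the embedding is $\G_a^{s}$ where $s$ counts the $j$ with $i=1$ for which $\delta^{(c\tau^0 E_{1\times j}A_n^{-1})}\notin\Der_0$, i.e. $s$ is the number of \emph{nonzero} rows of $A_n^{-1}N_{\Phi}$. Defining $g:\Ext^1_{\tau}(\Phi,C^{\otimes e})\to\G_a^{s}$ by the row matrix picking out those coordinates $\tau^0 E_{1\times j}$, I would check $g\,\Pi_t = \theta\, g$, which holds because, by the same reasoning as in Proposition \ref{propI}, the row of $\Pi_t$ indexed by such a $\tau^0 E_{1\times j}$ is $[0,\dots,0,\theta,0,\dots,0]$ (only reductions that could place a nonzero term there are excluded since the relevant inner biderivation lies in $\Der_0$). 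This yields the short exact sequence $0\to\Ext_{0,\tau}(\Phi,C^{\otimes e})\to\Ext^1_{\tau}(\Phi,C^{\otimes e})\to\G_a^{s}\to 0$.

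The main obstacle I anticipate is the bookkeeping around the nilpotent matrix $N_e$ of $C^{\otimes e}$: unlike the Drinfeld-module case of Theorem \ref{thm:Ext_dla_t_modul_i_Drinfeld}, where the codomain has no constant nilpotent part, here $C^{\otimes e}_t$ contributes $N_e$ at $\tau^0$, so one must check carefully that multiplying a biderivation matrix on the left by $\theta I_e + N_e$ does not spoil properties (i)–(iv) of the reduced form — in particular that the "row-shift" action of $N_e$ on $E_{i\times j}$ interacts correctly with the fact that, by Lemma \ref{lem:ext_t_mod_i_tensor_Carlitza}(ii), the rows with $i\ge 2$ already allow degree up to $\rk\Phi$ whereas the first row may only allow degree $\rk\Phi-1$. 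Once one confirms (as the lemma already essentially does) that $N_e E_{i\times j}A_n^{-1}$ sits in a row where the extra degree is permissible and that its constant term never lands in a slot required to vanish, the rest is a routine transcription of the earlier arguments. A secondary, purely notational point is keeping the double index $i\times j$ straight through the downward induction; the single-index induction of Proposition \ref{prop:ext_as_t_module} goes through mutatis mutandis because the reductions at distinct entries do not interfere — each inner biderivation $\delta^{(c\tau^k E_{i\times j}A_n^{-1})}$ touches the top degree only in entry $i\times j$.
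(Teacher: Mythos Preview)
Your overall strategy matches the paper's: invoke Lemma~\ref{lem:ext_t_mod_i_tensor_Carlitza}, transport the $\F_q[t]$-action via the reduction procedure, and verify the Anderson form. Two points need attention.

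For part~$(i)$ you underplay the one genuinely new feature. In all earlier cases (Proposition~\ref{prop:ext_as_t_module}, Proposition~\ref{propI}, Theorem~\ref{thm:Ext_dla_t_modul_i_Drinfeld}) the nilpotent matrix $N$ in $\Pi_t$ turned out to be \emph{zero}, and that was exactly what conditions~(iii)--(iv) delivered. Here $N\neq 0$: for $i\ge 2$ one computes directly
\[
t*(c\tau^k E_{i\times j})=(\theta I_e+N_e+E_{e\times 1}\tau)\,c\tau^kE_{i\times j}=\theta c\tau^kE_{i\times j}+c\tau^kE_{i-1\times j},
\]
no reduction needed, so the column of $\Pi_t$ at $\tau^kE_{i\times j}$ carries a constant $1$ in the off-diagonal slot $\tau^kE_{i-1\times j}$. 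The paper makes this explicit and then observes that, with the basis ordered first by the row index $i$, this $N$ is strictly upper triangular, hence nilpotent. Your phrase ``conditions (iii)--(iv) force $N=0$ in the diagonal slots'' blurs the issue: conditions (iii)--(iv) still govern the \emph{reductions}, but the nonzero nilpotent entries arise at the \emph{multiplication} step via $N_e$, and a separate (easy) argument is required to see that they assemble into a nilpotent matrix.

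For part~$(ii)$ your argument is self-contradictory. You first assert that $s$ counts the $j$ with $\delta^{(c\tau^0E_{1\times j}A_n^{-1})}\notin\Der_0$, then justify $g\,\Pi_t=\theta g$ by saying ``the relevant inner biderivation lies in $\Der_0$''. Tracing the template of Proposition~\ref{propI}(ii) carefully: the row of the embedding matrix at coordinate $\tau^0E_{i\times j}$ is zero precisely when $\delta^{(c\tau^0E_{i\times j}A_n^{-1})}\in\Der_0$, and it is \emph{those} coordinates that survive to the cokernel and for which the corresponding row of $\Pi_t$ reads $[0,\dots,\theta,\dots,0]$. By Lemma~\ref{lem:ext_t_mod_i_tensor_Carlitza} this happens exactly for $i=1$ and those $j$ with the $j$-th row of $A_n^{-1}N_\Phi$ equal to zero. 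Thus your own argument, once made consistent, yields $s=$ number of \emph{zero} rows of $A_n^{-1}N_\Phi$, in line with Theorem~\ref{thm:Ext_dla_t_modul_i_Drinfeld}(ii); you should flag the discrepancy with the ``nonzero'' in the stated theorem rather than force an inconsistent argument to match it.
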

\begin{proof}
The scheme of the proof is the same as of analogous theorems. One uses Lemma 
 \ref{lem:ext_t_mod_i_tensor_Carlitza}  and reduction by means of the inner biderivations described in the proof of this Lemma. The reduction process fulfills the properties  $(i)-(iv)$ described in the proof of the Proposition \ref{prop:ext_as_t_module}. In the current situation we use the following coordinate system: 
	$$\begin{array}{cccc}
		\Big(E_{1\times 1} c_k\tau^k\Big)_{k=0}^{\rk\Phi-1}, & \Big(E_{1\times 2} c_k\tau^k\Big)_{k=0}^{\rk\Phi-1},&\cdots & \Big(E_{1\times d} c_k\tau^k\Big)_{k=0}^{\rk\Phi-1},\\\\
		\Big(E_{2\times 1} c_k\tau^k\Big)_{k=0}^{\rk\Phi-1}, & \Big(E_{2\times 2} c_k\tau^k\Big)_{k=0}^{\rk\Phi-1},&\cdots & \Big(E_{2\times d} c_k\tau^k\Big)_{k=0}^{\rk\Phi-1},\\
		\vdots & \vdots & \vdots & \vdots \\
		\vdots & \vdots & \vdots & \vdots \\
		\Big(E_{e\times 1} c_k\tau^k\Big)_{k=0}^{\rk\Phi-1}, & \Big(E_{e\times 2} c_k\tau^k\Big)_{k=0}^{\rk\Phi-1},&\cdots & \Big(E_{e\times d} c_k\tau^k\Big)_{k=0}^{\rk\Phi-1}.\\
	\end{array}$$
	It is worth pointing out a significant difference which occurs in this case. In the former cases  $\Ext^1_{\tau}$ was a  \tm module with the zero nilpotent matrix which was a result of the aforementioned properties  $(iii)$ and $(iv)$  of Proposition \ref{prop:ext_as_t_module}. In the current case, reductions again will not change this, but at the stage of the multiplication
	 $t*E_{i\times j}c_k\tau^k$ we can obtain nonzero entries of the nilpotent matrix. More precisely for $i=2,3,\dots, e$ we obtain:
	\begin{align*}
		t*E_{i\times j}c_k\tau^k&=C^{\otimes e}\cdot E_{i\times j}c_k\tau^k=\Big(\theta I_e+N_e+E_{1\times e}\tau\Big)E_{i\times j}c_k\tau^k\\
		&=\theta E_{i\times j}c_k\tau^k+ E_{i-1\times j}c_k\tau^k\\
		&=\Big[0,\dots,0,\podwzorem{0,\dots,0,1,0\dots,0}{i-1\times j}, \podwzorem{0,\dots,0,\theta,0\dots,0}{i\times j},0\dots, 0  \Big]_{\mid c_k}.
	\end{align*}  
Thus  $N_{\Ext^1(\Phi, C^{\otimes e})}$  is an upper triangular matrix with zeroes on the diagonal and therefore nilpotent. 
This finishes the proof of  $(i)$. Proof of $(ii)$ follows the lines of the proof of  part $(ii)$ of  Theorem \ref{thm:Ext_dla_t_modul_i_Drinfeld}.  
\end{proof}

\begin{rem}
Similarly as at the end of Section   \ref{Phipsi}  using  Theorem \ref{Duality} we can prove the "\tsm - version" of  the Theorem \ref{crl}.   
\end{rem}

\section{Pushouts and pullbacks for \tm modules and their applications}\label{push}

\newcommand{\wlozenie}{\left[\begin{array}{c}
		0\\1
	\end{array}\right]}
\newcommand{\rzut}{\left[\begin{array}{cc}
		1&0
	\end{array}\right]}
In this section we  study pullbacks and pushouts in the category of  \tm modules. For definitions and basic properties of these the reader is advised to consult \cite{Mac1}. In what follows
 by $\wlozenie$ we mean the map of  \tm modules $(\psi, G^m_a)\hookrightarrow (X, G^{m+n}_a)$ which
 on underlying group
  schemes is given  by the  injection $G^m_a\cong 0^n\times G^{m}_a\hookrightarrow G^{m+n}_a.$ 
  Similarly, by $\rzut$ we denote the map of  \tm modules $(X, G^{m+n}_a)\rightarrow (\psi, G^n_a)$ which
 on underlying group
  schemes is given  by the  surjection $G^{m+n}_a \rightarrow G^n_a \times 0^m\cong G^n_a .$ 
  So that in the category of ${\mathbb F}_q[t]$-modules pullbacks and pushouts exist follows from the fact that this category is abelian. In the next theorem we show that they exist   in the category of \tm modules and can be nicely described in the language of biderivations.

\begin{thm}\label{thm:pull-back_push-out}
Let  
$$\delta: \quad 0\lra F\lra X\lra E\lra 0$$
be a short exact sequence
of \tm modules, given by the biderivation $\delta$.
Then
\begin{itemize}
	\item[$(i)$] for each morphism of \tm modules $g:G\lra E$, the pull-back of $\delta$ by $g$ is a \tm module, given by the biderivation $\delta\cdot g,$
	\item[$(ii)$] for each morphism of \tm modules $f:F\lra G$, the push-out of $\delta$ by $f$ is a \tm module, given by the biderivation $f\cdot\delta$. 
\end{itemize}
\end{thm}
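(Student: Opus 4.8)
The plan is to work entirely on the level of biderivations, using the identification $\Ext^1_\tau(E,F)\cong\Der(\Phi_E,\Phi_F)/\Derin(\Phi_E,\Phi_F)$ from \eqref{iso_ext} together with the explicit ``block triangular'' description of the middle object: if $\delta\colon 0\to F\to X\to E\to 0$ is given by $\delta\in\Der(\Phi_E,\Phi_F)$, then we may take $X=(G_a^{d_E+d_F},\Phi_X)$ with
$$
\Phi_X(t)=\begin{bmatrix}\Phi_E(t)&0\\ \delta(t)&\Phi_F(t)\end{bmatrix},
$$
and the maps $F\to X$, $X\to E$ are the standard inclusion $\wlozenie$ and projection $\rzut$ on the underlying group schemes. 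This is exactly the normal form established (for Drinfeld modules) in the Corollary after Lemma \ref{lemma_iso_Ext} and extended by the reduction procedure of Proposition \ref{prop:ext_as_t_module}; I would first record that the same block-triangular construction represents an arbitrary extension of \tm modules, which is immediate since a $t$-action on $G_a^{d_E+d_F}$ compatible with the sequence must be block lower-triangular with diagonal blocks $\Phi_E(t),\Phi_F(t)$, and the off-diagonal block is forced to be a biderivation by the requirement that $\Phi_X$ be an $\F_q$-algebra homomorphism.

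For part $(i)$, given $g\colon G\to E$ (a matrix $g\in\mathrm{Mat}_{d_G\times d_E}(K\{\tau\})$ with $g\,\Phi_E(t)=\Phi_G(t)\,g$), I would exhibit the pull-back concretely: set $Y=(G_a^{d_G+d_F},\Phi_Y)$ with
$$
\Phi_Y(t)=\begin{bmatrix}\Phi_G(t)&0\\ \delta(t)\,g&\Phi_F(t)\end{bmatrix},
$$
together with the evident maps $Y\to G$ (projection onto the first $d_G$ coordinates) and $Y\to X$ (given by $\mathrm{diag}$-type block matrix $\begin{bmatrix}g&0\\0&I\end{bmatrix}$). One checks: (a) $\delta(t)\,g$ is indeed a biderivation in $\Der(\Phi_G,\Phi_F)$ — this is the routine verification $\delta(ab)g=\Phi_F(a)\delta(b)g+\delta(a)\Phi_E(b)g$ combined with $\Phi_E(b)g=g\,\Phi_G(b)$; (b) the two maps out of $Y$ are morphisms of \tm modules, i.e. they commute with the displayed block matrices — again a block-matrix computation using $g\Phi_E(t)=\Phi_G(t)g$; (c) the square $Y\to X\to E$, $Y\to G\xrightarrow{g}E$ commutes; and (d) $Y$ has the universal property of the pull-back. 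For (d), since the forgetful functor to $\F_q[t]$-modules is faithful and the pull-back there is computed as the fibre product $\{(v,x):g(v)=\pi(x)\}$, any candidate map into the fibre product lands in a subspace that is visibly $G_a^{d_G+d_F}$ sitting inside $G_a^{d_G}\times G_a^{d_E+d_F}$, and one checks the induced $t$-action matches $\Phi_Y(t)$. Part $(ii)$ is entirely dual: for $f\colon F\to G$ with $f\,\Phi_F(t)=\Phi_G(t)\,f$ (wait — orientation: $f\Psi_?(t)=\Phi_?(t)f$ per the morphism convention, so here $f\in\mathrm{Mat}_{d_G\times d_F}$ with $f\,\Phi_F(t)=\Phi_G(t)\,f$), set
$$
Z=(G_a^{d_E+d_G},\Phi_Z),\qquad
\Phi_Z(t)=\begin{bmatrix}\Phi_E(t)&0\\ f\,\delta(t)&\Phi_G(t)\end{bmatrix},
$$
with maps $F\xrightarrow{f}G\hookrightarrow Z$ and $X\to Z$ given by $\begin{bmatrix}I&0\\0&f\end{bmatrix}$, and verify symmetrically that $f\,\delta$ is a biderivation in $\Der(\Phi_E,\Phi_G)$, that the relevant maps are \tm morphisms, and that $Z$ satisfies the push-out universal property.

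The main obstacle is step (d): establishing the universal property \emph{internally} to the (merely additive, non-abelian) category of \tm modules rather than borrowing it from the ambient abelian category of $\F_q[t]$-modules. The point is that an abstract test object $T$ with maps into the fibre product diagram gives, via the forgetful functor, a unique $\F_q[t]$-linear map $T\to Y$; the content is that this map is automatically a morphism of \tm modules, i.e. is realized by a matrix in $K\{\tau\}$ commuting with $\Phi_Y(t)$. This follows because the two structure maps out of $Y$ are jointly monic in the \tm category (they are so on underlying group schemes), so the $\F_q[t]$-linear map $T\to Y$, being compatible with two genuine \tm morphisms, must itself respect the $K\{\tau\}$-structure — but making this precise is exactly the kind of diagram-chase that is delicate without abelianness, and I expect it to require either the explicit coordinates above or an appeal to the results of Section \ref{push} on existence of pull-backs/push-outs proved independently. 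I would therefore organize the proof so that existence-with-universal-property is checked by hand on the explicit models $Y$ and $Z$, and only then identify the biderivations of the resulting extensions as $\delta\cdot g$ and $f\cdot\delta$ by reading off the off-diagonal blocks.
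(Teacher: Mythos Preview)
Your proposal is correct and matches the paper's approach: construct the explicit block-triangular $\mathbf t$-module with off-diagonal entry $\delta\cdot g$ (resp.\ $f\cdot\delta$), check that the evident maps are $\mathbf t$-module morphisms making the square commute, and verify the universal property by hand on this model. Your concern about step~(d) is overblown: the paper dispatches it in one line by writing the mediating map as $\gamma=\left[\begin{smallmatrix}\alpha\\\beta_2\end{smallmatrix}\right]$ directly from the test data $\alpha\colon\widehat Y\to G$ and $\beta=\left[\begin{smallmatrix}\beta_1\\\beta_2\end{smallmatrix}\right]\colon\widehat Y\to X$ --- since $\alpha$ and $\beta_2$ are already matrices over $K\{\tau\}$, so is $\gamma$, and its compatibility with $\Phi_Y(t)$ is an immediate block check, with no detour through the ambient $\F_q[t]$-module category or joint-monicity argument required.
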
	
 \begin{proof}
 	The sequence $\delta$ has the following form: 
 	\begin{equation}
	\delta: \quad 0 \lra F\uplra{\wlozenie} X\uplra{\rzut} E\lra 0,
	\end{equation}
 	where $F$  (resp.  $E$)  is given by the map $\Psi:\F_q[t]\lra {\mathrm{Mat}}_{e}(K\{\tau\})$ (resp. $\Phi:\F_q[t]\lra {\mathrm{Mat}}_{d}(K\{\tau\})$) and $X$ is given by the following block matrix:
 	$$\left[\begin{array}{cc}
 		\Phi & 0\\
 		\delta & \Psi
 	\end{array}\right]:\F_q[t]\lra {\mathrm{Mat}}_{e+d}(K\{\tau\}).$$
 {\em Proof of part $(i):$} Let $G$ be given by $\Xi:\F_q[t]\lra {\mathrm{Mat}}_{r}(K\{\tau\})$. We claim that the following diagram
 $$\xymatrix @C+2pc @R+1.5pc{0\ar[r] & F\ar[r]^{\wlozenie} \ar[d]^{=} & Y \ar[r]^{\rzut} \ar[d]^{\left[\begin{array}{cc}
 			g&0\\
 			0&1
 		\end{array}\right]} & G \ar[r] \ar[d]^{g}& 0\\
 0\ar[r] & F\ar[r]_{\wlozenie} & X \ar[r]_{\rzut} & E \ar[r] & 0}$$
 is commutative with exact rows, where $Y$ is given by the map: 
 	$$\left[\begin{array}{cc}
 	\Xi & 0\\
 	\delta\cdot g & \Psi
 \end{array}\right]:\F_q[t]\lra {\mathrm{Mat}}_{e+d}(K\{\tau\}).$$
It is obvious, that the rows are exact, and it is easy to see, that the two squares are commutative.  
We will check that the middle vertical map is a morphism of  \tm modules. 
\begin{align*}
\left[\begin{array}{cc}
	g&0\\
	0&1
\end{array}\right]\cdot	\left[\begin{array}{cc}
		\Xi & 0\\
		\delta\cdot g & \Psi
	\end{array}\right]=
\left[\begin{array}{cc}
	g\Xi & 0\\
	\delta\cdot g & \Psi
\end{array}\right]=
\left[\begin{array}{cc}
	\Phi & 0\\
	\delta & \Psi
\end{array}\right]\cdot 
\left[\begin{array}{cc}
	g&0\\
	0&1
\end{array}\right], 
\end{align*}
where $g\Xi=\Phi g$ because $g$ is a morphism of \tm modules.  

It remains to show that the universal property for a pullback holds true. 
So, assume that there is a \tm module $\widehat{Y}$ with the morphisms of \tm modules $\alpha:\widehat{Y}\lra G$ and $\beta=\left[\begin{array}{c}
	\beta_1\\
	\beta_2
\end{array}\right]:\widehat{Y}\lra X$ such that
$g\alpha=\rzut
\left[\begin{array}{c}
	\beta_1\\
	\beta_2
\end{array}\right].$ 
We claim that there is a morphism of \tm modules $\gamma=\left[\begin{array}{c}
	\gamma_1\\
	\gamma_2
\end{array}\right]:\widehat{Y}\lra Y$ such that 
$$\rzut\left[\begin{array}{c}
	\gamma_1\\
	\gamma_2
\end{array}\right]=\alpha\quad \textnormal{and} \quad\left[\begin{array}{cc}
	g&0\\
	0&1
\end{array}\right]\left[\begin{array}{c}
\gamma_1\\
\gamma_2
\end{array}\right]=\left[\begin{array}{c}
\beta_1\\
\beta_2
\end{array}\right].$$
It is easy to see, that $\gamma=\left[\begin{array}{c}
	\alpha\\
	\beta_2
\end{array}\right]$ satisfies the above conditions. Therefore $Y$ is the pullback given by the biderivation $\delta\cdot g$.  

{\em Proof of part $(ii):$} Let $G$ be given by $\Xi:\F_q[t]\lra {\mathrm{Mat}}_{r}(K\{\tau\})$. It is easy to check that the following diagram:
$$\xymatrix @C+2pc @R+1.5pc{0\ar[r] & F\ar[r]^{\wlozenie} \ar[d]^{f} & X \ar[r]^{\rzut} \ar[d]^{\left[\begin{array}{cc}
			1&0\\
			0&f
		\end{array}\right]} & E \ar[r] \ar[d]^{=}& 0\\
	0\ar[r] & G\ar[r]_{\wlozenie} & Y \ar[r]_{\rzut} & E \ar[r] & 0}$$
is commutative with exact rows, where $Y$ is given by the following map: 
$$\left[\begin{array}{cc}
	\phi & 0\\
	f\cdot \delta  & \Xi
\end{array}\right]:\F_q[t]\lra {\mathrm{Mat}}_{e+d}(K\{\tau\}).$$
The proof of the  universal property for a pushout  is similar to that  for a pullback presented in part $(i).$ 
 \end{proof}
 
 \begin{rem}\label{uw:nowa} Notice that from  Theorem  \ref{thm:pull-back_push-out} it follows that the multiplication by 
 $a\in\F_q[t]$ of the short exact sequence  $\delta\in\Ext^1_\tau(\Phi,\Psi)$ is given by  the pullback of the sequence $\delta$ by the map $\Phi_a:\Phi\lra\Phi$ or equivalently by the  pushout of the sequence  $\delta$ by the map $\Psi_a:\Psi\lra \Psi$ \end{rem}

 Let $\Lambda$ be a ring,  $0\rightarrow A\rightarrow B\rightarrow C\rightarrow 0$ an exact sequence of $\Lambda$-modules and $D$  a $\Lambda$-module.
 It is a standard result in homological algebra (cf. \cite[Theorem 3.4]{Mac} or \cite[Theorem 5.2]{hs}) that one has a six term exact sequence 
 called $\Hom - \Ext$ sequence in the second variable:
 \begin{align*}
0\lra \Hom_{\Lambda}(D,A)\uplra{i\circ-}  \Hom_{\Lambda}(D,B)\uplra{\pi\circ-}  \Hom_{\Lambda}(D,C)\lra  \\
		\uplra{\delta\circ-} \Ext_{\Lambda}^1(D,A)\uplra{-i\circ-}  \Ext_{\Lambda}^1(D,B)\uplra{-\pi\circ-}  \Ext_{\Lambda}^1(D,C ).\numberthis \label{exseq}
		\end{align*}
Dually, one has the following  $\Hom - \Ext$ sequence in the first variable:	
\begin{align*}
		0\lra \Hom_{\Lambda}(C,D)\uplra{-\circ \pi}  \Hom_{\Lambda}(B,D)\uplra{-\circ i}  \Hom_{\Lambda}(A,D)\uplra{-\circ\delta}  \\
		\lra \Ext_{\Lambda}^1(C,D)\uplra {-\circ (-\pi)} \Ext_{\Lambda}^1(B,D)\uplra{-\circ (-i)}  \Ext_{\Lambda}^1(A,D). \numberthis \label{exseq1}
	\end{align*}	
		
		These sequences in general can be continued by higher $\Ext$ bifunctors. However, if  $\Lambda$ is a P.I.D. (or more generally a Dedekind ring) the last maps in (\ref{exseq}) and (\ref{exseq1}) are surjections \cite[Corollary 5.7]{hs}, \cite{i59}.

For exact sequences in the category of \tm modules we obtain analogous exact sequences of ${\mathbb F}_q[t]$-modules.
		
The following example shows that in general for \tm modules $(E,\Phi)$ and  $(F,\Psi)$ we have $\Hom_{\tau}(F,E)\subsetneq\Hom_{{\mathbb F}_q[t]}(G_{a}(K),G_a(K)).$ 
\begin{ex}\label{exm}
Let $\phi$ be a Drinfeld module  of rank $r$ defined over $K={\mathbb F}_q(t)$ i.e. $\phi_t={\sum}_{i=0}^ra_i\tau^i,\quad a_i\in{\mathbb F}_q[t].$ It is well-known that $\Hom_{\tau}(\phi, \phi)$ is a projective ${\mathbb F}_q[t]$-module of rank at most $r$ (cf. \cite{th04}). However, the Mordell-Weil group $\phi(K)=K$ of $\phi$ as an $\F_q[t]$-module is a direct sum 
of a finite torsion module and a free $\F_q[t]$-module on $\aleph_0$ generators (cf.\cite{p95}) This shows that 
$\Hom_{\tau}(\phi,\phi)\subsetneq\Hom_{{\mathbb F}_q[t]}(G_{a}(K),G_a(K)).$
\end{ex}
The following theorem gives an explicit description of the corresponding six term exact sequences:
		
\begin{thm}\label{thm:long_sequence}
Let 
$$\delta: \quad 0 \lra F\uplra{i} X\uplra{\pi} E\lra 0$$
 be a short exact sequence of \tm modules given by the biderivation $\delta$ and let $G$ be a \tm module. 
\begin{itemize}
	\item[$(i)$] There is an exact sequence of $\F_q[t]-$modules:
	\begin{align*}
		0\lra \Hom_{\tau}(G,F)\uplra{i\circ-}  \Hom_{\tau}(G,X)\uplra{\pi\circ-}  \Hom_{\tau}(G,E)\lra  \\
		\uplra{\delta\circ-} \Ext^1_{\tau}(G,F)\uplra{-i\circ-}  \Ext^1_{\tau}(G,X)\uplra{-\pi\circ-}  \Ext^1_{\tau}(G,E)\rightarrow 0.
	\end{align*}
	\item[$(ii)$]  There is an exact sequence of $\F_q[t]-$modules:
	\begin{align*}
		0\lra \Hom_{\tau}(E,G)\uplra{-\circ \pi}  \Hom_{\tau}(X,G)\uplra{-\circ i}  \Hom_{\tau}(F,G)\uplra{-\circ\delta}  \\
		\lra \Ext^1_{\tau}(E,G)\uplra {-\circ (-\pi)} \Ext^1_{\tau}(X,G)\uplra{-\circ (-i)}  \Ext^1_{\tau}(F,G)\rightarrow 0. 
	\end{align*}
\end{itemize} 
\end{thm}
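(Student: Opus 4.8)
The plan is to establish the six-term exactness by combining two ingredients: first, the abelian-category exact sequences \eqref{exseq} and \eqref{exseq1} for $\mathbb{F}_q[t]$-modules applied to the underlying Mordell-Weil modules, and second, the biderivation description of $\Ext^1_\tau$ together with the explicit pullback/pushout formulas of Theorem \ref{thm:pull-back_push-out}. The subtlety is that the category of \tm modules is only additive, not abelian, so one cannot simply quote the homological algebra machinery; instead one must verify exactness of each of the six terms "by hand" at the level of biderivations. Since part $(ii)$ is formally dual to part $(i)$ (replace a \tm module by its adjoint via Theorem \ref{Duality}, or simply repeat the argument with arrows reversed), I would prove $(i)$ in full and then remark that $(ii)$ follows by the same reasoning or by duality.

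For part $(i)$, I would proceed term by term. Exactness at $\Hom_\tau(G,F)$ is the statement that $i\circ-$ is injective, which is clear since $i=\wlozenie$ is a monomorphism of \tm modules (it is a split injection on underlying group schemes). Exactness at $\Hom_\tau(G,X)$: a morphism $h:G\to X$ with $\pi h=0$ factors through $i$ as a set-map since the rows are exact short exact sequences of algebraic groups, and one checks the resulting factorization commutes with the $t$-action using $h\,\Xi_t=X_t\,h$ together with the block form of $X_t=\left[\begin{smallmatrix}\Phi&0\\\delta&\Psi\end{smallmatrix}\right]$. The connecting map $\delta\circ-$ sends $h:G\to E$ to the pullback extension of $\delta$ along $h$, which by Theorem \ref{thm:pull-back_push-out}$(i)$ is represented by the biderivation $\delta\cdot h$ — so the image of $\delta\circ-$ is exactly $\{\delta\cdot h \mid h\in\Hom_\tau(G,E)\}$; exactness at $\Hom_\tau(G,E)$ (the image of $\pi\circ-$ equals the kernel of $\delta\circ-$) amounts to: $\delta\cdot h$ splits iff $h$ lifts to $X$, which one verifies directly by writing out a splitting $\left[\begin{smallmatrix} h\\ s\end{smallmatrix}\right]$ and reading off that $s$ provides the lift. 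Exactness at $\Ext^1_\tau(G,F)$ and at $\Ext^1_\tau(G,X)$ are the analogous "pullback kills the class iff it comes from the connecting map / previous term" statements, again translated into biderivation identities via Theorem \ref{thm:pull-back_push-out}.

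**The hard part will be** the surjectivity of the last map $-\pi\circ-:\Ext^1_\tau(G,X)\to\Ext^1_\tau(G,E)$. In the abelian setting this holds because $\mathbb{F}_q[t]$ is a P.I.D., so $\Ext^2$ vanishes; but here we must produce, for a given extension $\varepsilon$ of $G$ by $E$, an extension of $G$ by $X$ mapping to it, working only with biderivations and \tm modules. My plan is to use the $\mathbb{F}_q[t]$-module surjectivity from \eqref{exseq} applied to $G_a(K)$-modules to find a candidate cocycle, and then \emph{verify} that the resulting class is realized by an honest \tm module extension — i.e. that the relevant biderivation identity \eqref{delta} can be solved over $K\{\tau\}$. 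Concretely, given a biderivation $\eta$ representing $\varepsilon\in\Ext^1_\tau(G,E)$, I would seek a biderivation $\tilde\eta\in\Der(G,X)$ with $\rzut\circ\tilde\eta$ cohomologous to $\eta$; writing $\tilde\eta=\left[\begin{smallmatrix}\eta\\ \zeta\end{smallmatrix}\right]$ this reduces to solving a biderivation-type equation for $\zeta\in\Der(G,F)$ modulo inner ones, and the obstruction to solving it is precisely an element of an $\Ext^2$-type group which vanishes because of the P.I.D. structure underlying the identification \eqref{iso_ext} — here I would invoke \cite[Corollary 5.7]{hs} at the level of $\mathbb{F}_q[t]$-modules and then check the solution lifts to $K\{\tau\}$. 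I expect the bookkeeping of which inner biderivations are available (to adjust cocycles into cocycle form on the $X$-level) to be the genuinely delicate point, and I would handle it exactly as in the reduction arguments of Section \ref{extension}, using generators $\delta^{(c\tau^k)}$.
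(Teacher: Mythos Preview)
Your term-by-term approach for the first five positions in the sequence is essentially the same as the paper's: both arguments work directly with the block form $X_t=\left[\begin{smallmatrix}\Phi&0\\\delta&\Psi\end{smallmatrix}\right]$, write morphisms and biderivations in components, and use Theorem \ref{thm:pull-back_push-out} to identify the connecting map $\delta\circ-$ with multiplication by $\delta$ on the right. Your plan to handle part $(ii)$ by duality or by repeating the argument is also what the paper does (it simply says the proof is similar).

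Where you diverge is the surjectivity of $-\pi\circ-:\Ext^1_\tau(G,X)\to\Ext^1_\tau(G,E)$, and here you have overestimated the difficulty. You anticipate having to solve a ``biderivation-type equation for $\zeta$'' and to invoke the P.I.D.\ vanishing of $\Ext^2$ via \cite[Corollary 5.7]{hs} at the level of $\F_q[t]$-modules, then lift back to $K\{\tau\}$. But recall that a biderivation is determined freely by its value at $t$: \emph{any} matrix in $\mathrm{Mat}_{(d+e)\times r}(K\{\tau\})$ is the value at $t$ of some biderivation in $\Der(G,X)$. Since $\pi=\left[\begin{smallmatrix}1&0\end{smallmatrix}\right]$, the map $-\pi\circ-$ sends $\tilde\eta=\left[\begin{smallmatrix}\tilde\eta_1\\\tilde\eta_2\end{smallmatrix}\right]$ to $-\tilde\eta_1$, so given $\gamma\in\Der(G,E)$ one simply takes $\tilde\eta=\left[\begin{smallmatrix}-\gamma\\0\end{smallmatrix}\right]$. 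There is no equation to solve and no obstruction; the paper just writes down this preimage explicitly (presenting it as a block $3\times 3$ matrix for the middle term $Z$) and checks the commutative diagram. Your detour through the $\F_q[t]$-module category is not only unnecessary but also delicate, since Example \ref{exm} shows that $\Hom_\tau\subsetneq\Hom_{\F_q[t]}$ and hence lifting a cocycle from the Mordell--Weil level back to $K\{\tau\}$ is not automatic.
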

\begin{proof}
	We will give a proof of  part $(i).$ The proof for  part $(ii)$ is similar and is left to the reader.
	
	Recall that $i=\wlozenie$, $\pi=\rzut$ and \tm modules $F$, $E$ and $X$  are given by the maps $\Psi:\F_q[t]\lra {\mathrm{Mat}}_{e}(K\{\tau\})$, $\Phi:\F_q[t]\lra {\mathrm{Mat}}_{d}(K\{\tau\})$ and
	$$\left[\begin{array}{cc}
		\Phi & 0\\
		\delta & \Psi
	\end{array}\right]:\F_q[t]\lra {\mathrm{Mat}}_{e+d}(K\{\tau\}).$$ Assume that $G$ is given by $\Xi:\F_q[t]\lra {\mathrm{Mat}}_{r}(K\{\tau\})$.
	The exactness at $\Hom_{\tau}(G,F)$, $\Hom_{\tau}(G,X)$ is obvious from the form of maps $i$ and $\pi$. 
	
Now we consider the exactness at $\Hom_{\tau}(G,E)$. Let $\left[\begin{array}{c}
		f_1\\
		f_2
	\end{array}\right]:G\lra X$ be a map of \tm modules. Hence there is an equality: 
	$$\left[\begin{array}{c}
		f_1\\
		f_2
	\end{array}\right]\Xi=
\left[\begin{array}{cc}
	\Phi & 0\\
	\delta & \Psi
\end{array}\right]
\left[\begin{array}{c}
	f_1\\
	f_2
\end{array}\right].$$
This implies 
\begin{equation}\label{star}
\quad f_1\Xi=\Phi f_1\quad\textnormal{and}\quad f_2\Xi=\delta f_1+\Psi f_2.
\end{equation}
We will prove, that an exact sequence given by the biderivation:
$$\delta\circ\pi\circ\left[\begin{array}{c}
	f_1\\
	f_2
\end{array}\right]=\delta\circ
\rzut \circ\left[\begin{array}{c}
f_1\\
f_2
\end{array}\right]=\delta\circ f_1=\delta f_1$$
splits. Consider the following  diagram: 
$$\xymatrix @C+2pc @R+1.5pc{
	&0\ar[r] & F\ar[r]^{\wlozenie} \ar[d]^{=} & G\oplus F\ar[r]^{\rzut} \ar[d]^{\left[\begin{array}{cc}
			1&0\\
			f_2&1
		\end{array}\right]} & G \ar[r] \ar[d]^{=}& 0\\
\delta f_1:	&0\ar[r] & F\ar[r]_{\wlozenie} & Y \ar[r]_{\rzut} & G \ar[r] & 0},$$
where the lower row is given by  the biderivation $\delta f_1$, i.e. $Y$ is defined by the map: 
$$\left[\begin{array}{cc}
	\Xi & 0\\
	\delta f_1 & \Psi
\end{array}\right]:\F_q[t]\lra {\mathrm{Mat}}_{e+r}(K\{\tau\}).$$
It is easy to see, that this diagram is commutative, with exact rows. From $(\ref{star})$ 
 the middle vertical map is a morphism of \tm modules that is also an isomorphism. 
 Therefore the sequence $\delta f_1$ splits.
 
On the other hand, assume that for some morphism  of \tm modules $f_1:G\lra E$ the sequence given by the biderivation
$\delta f_1$ splits. Therefore $\delta f_1\in\Derin(\Xi,\Psi)$, so there is $U\in {\mathrm{Mat}}_{e+r}(K\{\tau\})$  such that 
\begin{equation}\label{nabla}
\quad \delta f_1=\delta^{(U)}=U\Xi-\Psi U.
\end{equation}
Then $f=\left[\begin{array}{c}
	f_1\\
	U
\end{array}\right]:G\lra X$ is a morphism of \tm modules. Indeed 
\begin{align*}
	\left[\begin{array}{c}
		f_1\\
		U
	\end{array}\right]\Xi=
	\left[\begin{array}{c}
	f_1\Xi\\
	U\Xi
\end{array}\right]=
\left[\begin{array}{c}
	\Phi f_1\\
	\delta f_1+ \Psi U
\end{array}\right] = 
\left[\begin{array}{cc}
	\Phi & 0\\
	\delta  & \Psi
\end{array}\right]
\left[\begin{array}{c}
	f_1\\
	 U
\end{array}\right]
\end{align*}
where the second equality follows from ($\ref{nabla}$) and the fact that $f_1:G\lra E$ is a morphism of \tm modules. 
Hence $\pi\circ f= \rzut \left[\begin{array}{c}
	f_1\\
	U
\end{array}\right]=f_1$, which shows  exactness at  $\Hom_{\tau}(G,E)$. 

For the exactness at $\Ext^1_{\tau}(G,F)$ let $f:G\lra E$ be a morphism of \tm modules. We will prove that the extension given by the biderivation $-i\delta f$ splits. Recall that $\delta f$ determines  the following short exact sequence:
$$0\lra F\lra Y\lra G\lra 0,$$
where $Y$ is defined by the map: 
	$$\left[\begin{array}{cc}
	\Xi & 0\\
	\delta f & \Psi
\end{array}\right]:\F_q[t]\lra {\mathrm{Mat}}_{r+e}(K\{\tau\}).$$ Then the biderivation $-i\delta f$ 
determines  the following short exact sequence:
$$0\lra X\lra \widehat{Y}\lra G\lra 0,$$
where $\widehat{Y}$ is defined by the map: 
$$\left[\begin{array}{c|cc}
	\Xi & 0 &0\\ \hline
	0& \Phi & 0\\
	-\delta f & \delta & \Psi
\end{array}\right]:\F_q[t]\lra {\mathrm{Mat}}_{r+e}(K\{\tau\}).$$
It is easy to check, that the following diagram:
$$\xymatrix  @C+1pc @R+3pc{
	0\ar[r] & X\ar[r]^{\wlozenie} \ar[d]^{=} & Y \ar[rr]^{\rzut} \ar[d]^{\left[\begin{array}{ccc}
			1&0&0\\
			-f&1&0\\
			0&0&1
		\end{array}\right]} && X \ar[r] \ar[d]^{=}& 0\\
	0\ar[r] & X \ar[r]_{\wlozenie} & G\oplus X \ar[rr]_{\rzut} && G \ar[r] & 0},$$
is commutative and the map $\left[\begin{array}{ccc}
	1&0&0\\
	-f&1&0\\
	0&0&1
\end{array}\right]$ is an isomorphism  of \tm modules. Therefore the sequence $-i\delta f$ splits. 

 Now consider the exact sequence:
$$\eta:\quad 0\lra F\lra Y\lra G\lra 0 \in \Ext^1_{\tau}(G,F),$$
where $Y$ is given by the map    
$$\left[\begin{array}{cc}
	\Xi & 0\\
	\eta & \Psi
\end{array}\right]:\F_q[t]\lra {\mathrm{Mat}}_{r+e}(K\{\tau\}),$$
and assume that the sequence 
$$-i\circ\eta:\quad 0\lra X\lra \widehat Y\lra G\lra 0 \in \Ext^1_{\tau}(G,F)$$
splits, where 
$\widehat{Y}$ is defined by the map 
$$\left[\begin{array}{c|cc}
	\Xi & 0 &0\\ \hline
	0& \Phi & 0\\
	-\eta & \delta & \Psi
\end{array}\right]:\F_q[t]\lra {\mathrm{Mat}}_{r+e}(K\{\tau\}).$$
Because $-i\circ \eta$ splits, then the biderivation 
$-i\circ \eta\in\Derin\Bigg(\Xi, 
\left[\begin{array}{cc}
	\Phi & 0\\
	\delta & \Psi
\end{array}\right]\Bigg).$ 
Hence, there is $U=
\left[\begin{array}{c}
	u_1\\
	u_2 
\end{array}\right]
\in {\mathrm{Mat}}_{e+d\times r}(K\{\tau\})$ such that 
\begin{align*}
\left[\begin{array}{c}
		0\\
		-\eta 
	\end{array}\right]= \delta^{(U)}=
\left[\begin{array}{c}
	u_1\\
	u_2 
\end{array}\right] \Xi - 
\left[\begin{array}{cc}
	\Phi & 0\\
	\delta & \Psi
\end{array}\right]
\left[\begin{array}{c}
	u_1\\
	u_2 
\end{array}\right] = 
\left[\begin{array}{c}
	u_1\Xi-\Phi u_1\\
	u_2 \Xi - \delta u_1- \Psi u_2
\end{array}\right].
\end{align*}
Therefore $u_1$ is a morphism of \tm modules and 
\begin{align*}
	\eta &= \delta u_1 + \Psi u_2 - u_2\Xi = \delta u_1 -\podwzorem{\Big(u_2\Xi - \Psi u_2\Big)}{=\delta^{(u_2)}}
	= \delta u_1 -\delta^{(u_2)}.
\end{align*}
Thus the biderivations $\eta$ and $\delta u_1$ determine the same extension in $\Ext^1_{\tau}(G,F)$, 
 which shows the exactness at $\Ext^1_{\tau}(G,F)$.

 Now we consider exactness at  $\Ext^1_{\tau}(G,X)$. 
 For $\eta\in\Ext^1_{\tau}(G,F)$ there is an equality  
 $$-\pi\circ\Big( -i\circ \eta \Big) = \pi \circ i (\eta) =0.$$
 Hence the sequence given by the biderivation $-\pi\circ\Big( -i\circ \eta \Big)$ splits. 
 
 On the other hand  assume, that $\eta=
 \left[\begin{array}{c}
 	\eta_1\\
 	\eta_2
 \end{array}\right]$ is the biderivation determining the exact sequence from $\Ext^1_{\tau}(G,X)$, such that 
$-\pi \circ \eta$ gives a split sequence in $\Ext^1_{\tau}(G,E)$. Then 
$$-\pi \circ \eta=-\rzut\circ\left[\begin{array}{c}
	\eta_1\\
	\eta_2
\end{array}\right]=-\eta_1\in\Derin(\Xi, \Phi).$$ 
Thus there is $u\in {\mathrm{Mat}}_{d\times r}(K\{\tau\})$ such that 
$-\eta_1=\delta^{(u)}=u\Xi-\Phi u$.
We put
$U=\left[\begin{array}{c}
	u\\
	0
\end{array}\right]\in {\mathrm{Mat}}_{d+e\times r}(K\{\tau\}).$ Then 
the inner biderivation $\delta^{(U)}\in \Derin\Bigg(\Xi, \left[\begin{array}{cc}
	\Phi & 0\\
	\delta & \Psi
\end{array}\right] \Bigg)$
has the following form:
\begin{align*}
	\delta^{(U)}=
	\left[\begin{array}{c}
		u\\
		0
	\end{array}\right] \Xi - 
	\left[\begin{array}{cc}
		\Phi & 0\\
		\delta & \Psi
	\end{array}\right]
	\left[\begin{array}{c}
		u\\
		0 
	\end{array}\right] = 
	\left[\begin{array}{c}
		u\Xi-\Phi u\\
		- \delta u
	\end{array}\right]=
\left[\begin{array}{c}
	\delta^{(u)}\\
	- \delta u
\end{array}\right].
\end{align*}
Therefore 
\begin{align*}
	\eta+\delta^{(U)}&=\left[\begin{array}{c}
		-\delta^{(u)}\\
		\eta_2
	\end{array}\right]+
	\left[\begin{array}{c}
		\delta^{(u)}\\
		- \delta u
	\end{array}\right]=
	\left[\begin{array}{c}
		0\\
		\eta_2-\delta u
	\end{array}\right]=
	-\left[\begin{array}{c}
		0\\
		1
	\end{array}\right] \circ \Big( \delta u-\eta_2 \Big)\\
&= -i\circ \big( \delta u-\eta_2 \big).
\end{align*} 
Thus the sequence corresponding to $\eta$ is given by the biderivation $-i\circ \big( \delta u-\eta_2 \big)$. This proves the  exactness at $\Ext^1_{\tau}(G,X)$.  
Now we will prove that the map: $ \Ext^1_{\tau}(G,X)\uplra{-\pi\circ-}  \Ext^1_{\tau}(G,E)$ is a surjection.

Let
\begin{equation}
	\gamma: \quad 0 \lra E\uplra{\wlozenie} Y\uplra{\rzut} G\lra 0,
	\end{equation}
be an element of $\Ext^1_{\tau}(G,E)$ where $Y$ is given by the following map:
\begin{equation}\label{eeq}
\left[\begin{array}{cc}
	\Xi & 0\\
	\gamma & \Psi
\end{array}\right] : {\mathbb F}_q[t]\rightarrow {\mathrm{Mat}}_{e+r}(K\{\tau\}).
\end{equation}
Then there exists the following commutative diagram with exact rows:
\begin{equation}\label{dgg}
\xymatrix  @C+1pc @R+3pc{
0\ar[r] & X\ar[r]^{\left[\begin{array}{cc}0&0\\1&0\\0&1\end{array}\right]} \ar[d]^{-\rzut} & Z \ar[rr]^{\rzut} \ar[d]^{\left[\begin{array}{ccc}
			1&0&0\\
			0&-1&0
		\end{array}\right]} && G \ar[r] \ar[d]^{=}& 0\\
	0\ar[r] & E \ar[r]_{\wlozenie} & Y \ar[rr]_{\rzut} && G \ar[r] & 0},\end{equation}
where $Z$ is given by the following map:
$$\left[\begin{array}{ccc}
	\Xi&0&0\\
	-\gamma&\Phi&0\\
	0&\delta&\Psi
\end{array}\right]: {\mathbb F}_q[t] \rightarrow {\mathrm{Mat}}_{e+d+r}(K\{\tau\}).$$
Notice that
\begin{align*}
	\left[\begin{array}{ccc}
		1&0&0\\
		0&-1&0
	\end{array}\right] 
	\left[\begin{array}{ccc}
	\Xi&0& 0\\
	-\gamma&\Phi&0\\	
	0&\delta & \Psi
	\end{array}\right]=
	\left[\begin{array}{ccc}
		\Xi&0&0\\
		\gamma&-\Phi&0 
	\end{array}\right] = 
	\left[\begin{array}{cc}
		\Xi &0\\
		\gamma\Phi
	\end{array}\right]
\left[\begin{array}{ccc}
	1&0&0\\
	0&-1&0
\end{array}\right].
\end{align*}
Thus  (\ref{dgg}) is the diagram of morphisms of \tm modules and $\left(\begin{array}{c}-\gamma\\0\end{array}\right)\rightarrow \gamma$, where $\left(\begin{array}{c} -\gamma\\0\end{array}\right)$ is the extension given by the upper row 
of (\ref{dgg}).
\end{proof}

\begin{rem}
	In the Theorem \ref{thm:long_sequence} we assumed that the short exact sequence is given by the biderivation $\delta$ and in the result we obtained simple formula for the morphism from $\Hom_\tau$ to $\Ext^1_\tau$. In the case where the  short exact sequence is not given by the biderivation, but it  is isomorphic to a sequence given by the biderivation, the six-term exact sequences exist. The aforementioned  isomorphism of short exact sequences induces the isomorphism of the corresponding six-term sequences.
\end{rem}

We finish this section with the application of the six-term exact sequence.

\begin{ex}
Let $F$ (resp. $E$) be a Drinfeld module given by $\phi_t=\theta+\tau^3$ (resp. $\psi_t=\theta+\tau^2$) and consider 
the exact sequence of \tm modules 
\begin{equation}\label{er}
0\rightarrow F\rightarrow X\rightarrow E\rightarrow 0,
\end{equation}
 where $X$ is the extension given by the biderivation $\delta_t=1+\tau$ i.e. $X$ is the \tm module given by $\Gamma_t=\left[\begin{array}{cc}\theta+\tau^2&0\\1+\tau&\theta+\tau^3\end{array}\right]$.
Let $C$ be the Drinfeld module given  by ${\eta}_t=\theta+\tau.$ 
Since $\rk F>\rk C$ we have $\Hom_{\tau} (F,C)=0$ and from the six-term exact  sequence we obtain the short exact sequence:
\begin{equation}\label{exs}
0\rightarrow \Ext^1_{\tau}(E,C)\rightarrow \Ext^1_{\tau}(X,C)\rightarrow \Ext^1_{\tau}(F,C)\rightarrow 0
\end{equation}
One readily verifies that $\Ext^1_{\tau}(E,C)$ (resp. $\Ext^1_{\tau}(F,C)$) is the \tm module given by $\Psi_t=\left[\begin{array}{cc}\theta&0\\\tau&\theta+\tau^2\end{array}\right]$ (resp. $\Phi_t=\left[\begin{array}{ccc}\theta&0&0\\\tau&\theta &\tau^2\\
0&\tau&\theta
\end{array}\right]$).

We have:
\begin{align*}\label{xtt}
\numberthis \Ext^1_{\tau}(X,C)={\mathrm{Der(X,C)}}/{\mathrm{Der_{in}(X,C)}}\\=K\{ \tau \}^2/ \Big\langle\delta^{\left[\begin{array}{cc}c{\tau}^k,&0\end{array}\right]}, \delta^{\left[\begin{array}{cc}0,&c{\tau}^k\end{array}\right]}\mid c\in K, k\in \mathbb{Z}_{\geq 0}\Big\rangle,
\end{align*}
where 
\begin{align*}
\delta^{\left[\begin{array}{cc}c{\tau}^k,&0\end{array}\right]}&=
\left[\begin{array}{cc}c{\tau}^k,&0\end{array}\right]\left[\begin{array}{cc}\theta+\tau^2&0\\
1+\tau&\theta+\tau^3
\end{array}\right]-(\theta+\tau)\left[\begin{array}{cc}c{\tau}^k,&0\end{array}\right]\\
&=
\left[\begin{array}{cc}\delta^{(c{\tau}^k)},&0\end{array}\right]\quad\textnormal{where}\quad  \delta^{(c{\tau}^k)} \in{\mathrm{Der}_{in}(E,C)}
\end{align*}
\begin{align*}
	\delta^{\left[\begin{array}{cc}0,&c{\tau}^k\end{array}\right]}&=
	\left[\begin{array}{cc}0,&c{\tau}^k\end{array}\right]\left[\begin{array}{cc}\theta+\tau^2&0\\
		1+\tau&\theta+\tau^3
	\end{array}\right]-(\theta+\tau)\left[\begin{array}{cc}0,&c{\tau}^k\end{array}\right]\\
	&=
	\left[\begin{array}{cc}c\tau^k+c\tau^{k+1},&\delta^{(c{\tau}^k)}\end{array}\right]\quad\textnormal{where}\quad  \delta^{(c{\tau}^k)} \in{\mathrm{Der}_{in}(F,C)}
\end{align*}

Thus first reducing the second coordinates by the elements $\delta^{\left[\begin{array}{cc}0,&c{\tau}^k\end{array}\right]}$ and then 
the first coordinates by $\delta^{\left[\begin{array}{cc}c{\tau}^k,&0\end{array}\right]}$ one can see that:
\begin{equation}\label{exxx1}
\Ext^1_{\tau}(X,C)\cong\{\left[\begin{array}{cc}c_0+c_1\tau,&d_0+d_1\tau+d_2\tau^2\end{array}\right] \,\mid \,c_i, d_i\in K\}
\end{equation}	
Enumerating the basis elements of (\ref{exxx1}) lexicographically i.e. $$\left[\begin{array}{cc}0,&1\end{array}\right], 
\left[\begin{array}{cc}0,&\tau\end{array}\right], \left[\begin{array}{cc}0,&{\tau}^2\end{array}\right],  \left[\begin{array}{cc}1,&0\end{array}\right], \left[\begin{array}{cc}1,&\tau\end{array}\right]$$ and computing 
$t*\left[\begin{array}{cc}0,&d_i\cdot \tau^i\end{array}\right],\,i=0,1,2$ and
$t*\left[\begin{array}{cc}c_i\cdot\tau^i,&0\end{array}\right],\, i=0,1,$ in a similar to that in Section \ref{examples} way, one obtains that $\Ext^1_{\tau}(X,C)$ is a \tm module
defined by the matrix:
\begin{equation}
\Omega_t=\left[\begin{array}{ccccc}\theta&0&0&0&0\\
\tau&\theta&\tau^2&0&0\\
0&\tau&\theta&0&0\\
0&0&-\tau&\theta&0\\
0&0&-\tau&\tau&\theta+\tau^2
\end{array}\right]
\end{equation}
Therefore
\begin{equation}\label{er1}
\left[\begin{array}{cc}\Phi_t & 0\\
\Delta_t & \Psi_t \end{array}\right],
\end{equation}
where $\Delta_t=\left[\begin{array}{ccc}0 & 0 & -\tau\\
0 & 0 & -\tau\end{array}\right].$
Thus we showed that the exact sequence (\ref{exs}) is an extension of \tm modules.

Change of $\delta_t$ in (\ref{er}) to $\delta_t =1+\tau^3$ results in the change of $\Delta_t$ in (\ref{er1}). 
For the new data 
$\Delta_t=\left[\begin{array}{ccc}0 & 0 & -\tau\\
0 & 0 & (\theta - \theta^{(1)})\tau+\tau^4\end{array}\right].$ 
\end{ex}

Similar arguments to that in the proof of  Proposition \ref{prop:ext_as_t_module}, show that the following assertions hold true.
\begin{prop}\label{prop:sequence_exts}
	Let $0\lra F\lra X\lra E\lra 0$ be an exact sequence of \tm modules, where $F$ and $E$ are Drinfeld modules, and let $G$ be a a Drinfeld module. 
	\begin{itemize}
		\item[$(i)$] If $\rk G<\rk F$ and $\rk G<\rk E$, then there is a short exact sequence of \tm modules
		$$0\lra \Ext^1_\tau(E,G)\lra \Ext^1_\tau(X,G) \lra \Ext^1_\tau(F,G) \lra 0.$$
			\item[$(i^D)$] If $\rk G>\rk F$ and $\rk G>\rk E$, then there is a short exact sequence of \tm modules
		$$0\lra \Ext^1_\tau(G,F)\lra \Ext^1_\tau(G,X) \lra \Ext^1_\tau(G,E) \lra 0.$$
	\end{itemize}
\end{prop}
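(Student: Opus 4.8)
The plan is to run the short exact sequence $0\lra F\lra X\lra E\lra 0$ through the six-term $\Hom$--$\Ext$ exact sequences of Theorem \ref{thm:long_sequence}, to observe that the hypotheses on ranks force all three $\Hom$-terms to vanish so that the six-term sequence degenerates to the asserted short exact sequence of $\F_q[t]$-modules, and then to promote this to a short exact sequence of \tm modules by rerunning the reduction-by-inner-biderivations bookkeeping of Proposition \ref{prop:ext_as_t_module} for the middle term.

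For part $(i)$ I would apply Theorem \ref{thm:long_sequence}$(ii)$ with the \tm module $G$. A nonzero morphism of Drinfeld modules $f\colon F\lra G$ satisfies $f\phi_F(t)=\phi_G(t)f$, which on comparing $\tau$-degrees forces $\rk F=\rk G$; since $\rk G<\rk F$ this gives $\Hom_{\tau}(F,G)=0$, and likewise $\rk G<\rk E$ gives $\Hom_{\tau}(E,G)=0$, whence $\Hom_{\tau}(X,G)=0$ by exactness. The six-term sequence therefore collapses to
$$0\lra \Ext^1_{\tau}(E,G)\lra \Ext^1_{\tau}(X,G)\lra \Ext^1_{\tau}(F,G)\lra 0$$
in the category of $\F_q[t]$-modules, the two maps being, by Theorem \ref{thm:pull-back_push-out}, the pullbacks of the extension class along $\pi\colon X\to E$ and $i\colon F\to X$. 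The outer terms are \tm modules by Proposition \ref{prop:ext_as_t_module} (as $\rk E,\rk F>\rk G$). For the middle one, write $X_t=\left[\begin{array}{cc}\phi_E(t)&0\\ \delta_0&\phi_F(t)\end{array}\right]$ with $\delta_0\in K\{\tau\}$ the extension class, so that $\Der(X,G)\cong {\mathrm{Mat}}_{1\times 2}(K\{\tau\})$ and its inner biderivations are generated by $[\delta^{(c\tau^k)}_{E\to G}(t),\,0]$ and $[\,c\tau^k\delta_0,\,\delta^{(c\tau^k)}_{F\to G}(t)]$. Because $\rk E,\rk F>\rk G$, reducing first the second coordinate and then the first (exactly as in Lemma \ref{lemma_iso_Ext}) identifies $\Ext^1_{\tau}(X,G)$ with $K\{\tau\}_{<\rk E}\oplus K\{\tau\}_{<\rk F}$ as an $\F_q$-space; transporting the action $a*\Delta=\phi_G(a)\Delta$ and running the recursive step of Proposition \ref{prop:ext_as_t_module} coordinatewise then shows that multiplication by $t$ is a block lower-triangular matrix carrying the $\Ext^1_{\tau}$-matrices of $(E,G)$ and $(F,G)$ on the diagonal and a cross-term block built from $\delta_0$ underneath — exactly the shape of the matrix $\Omega_t$ in the Example preceding the statement. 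Verifying properties $(i)$--$(iv)$ of the proof of Proposition \ref{prop:ext_as_t_module} in each diagonal block shows the constant term equals $\theta I$ with nilpotent (here in fact zero) off-diagonal part, so this matrix defines a \tm module structure on $\Ext^1_{\tau}(X,G)$ for which the inclusion of the $K\{\tau\}_{<\rk E}$-summand and the projection onto the $K\{\tau\}_{<\rk F}$-summand are morphisms of \tm modules and coincide, up to sign, with the two maps above.

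Part $(i^D)$ is the mirror image: apply Theorem \ref{thm:long_sequence}$(i)$, use that $\rk G>\rk F$ and $\rk G>\rk E$ kill $\Hom_{\tau}(G,F)$, $\Hom_{\tau}(G,E)$ and hence $\Hom_{\tau}(G,X)$ to obtain $0\lra\Ext^1_{\tau}(G,F)\lra\Ext^1_{\tau}(G,X)\lra\Ext^1_{\tau}(G,E)\lra0$ of $\F_q[t]$-modules, and upgrade the middle term by the same reduction applied to $\Der(G,X)\cong{\mathrm{Mat}}_{2\times 1}(K\{\tau\})$ with the action $a*\Delta=X_t\Delta$; here one reduces first the top coordinate (which perturbs the bottom one by multiples of $\delta_0$) and then the bottom coordinate. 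The one difference worth flagging is that now the cross term $\delta_0\Delta_1$ can contribute a genuine degree-zero entry in the lower-left block, so the nilpotent part $N$ of $\Ext^1_{\tau}(G,X)$ need not vanish; but it is block lower-triangular with zero diagonal blocks, hence nilpotent, so $\Ext^1_{\tau}(G,X)$ is a \tm module all the same.

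I expect the upgrade of the middle term to be the main obstacle. Since $X$ is a two-dimensional \tm module whose leading $\tau$-coefficient matrix is in general not invertible, none of Theorems \ref{thm:Ext_dla_t_modul_i_Drinfeld}, \ref{gen2}, \ref{crl} applies off the shelf, and one must carry out the reduction bookkeeping of Proposition \ref{prop:ext_as_t_module} by hand — keeping careful track of how reducing one block perturbs the other and checking that such perturbations only ever feed Frobenius-twisted coefficients into the upper block — so that the resulting matrix keeps the admissible form $(\theta I+N)\tau^0+\sum_{i\geq 1}A_i\tau^i$ with $N$ nilpotent.
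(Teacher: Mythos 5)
Your proposal is correct and follows essentially the same route as the paper: the paper's proof is just a pointer to the argument of Proposition \ref{prop:ext_as_t_module}, with the preceding example (the computation of $\Omega_t$ for $\Ext^1_{\tau}(X,C)$) illustrating exactly the combination you spell out — vanishing of the relevant $\Hom$'s collapses the six-term sequence of Theorem \ref{thm:long_sequence}, and the by-hand reduction of $\Der(X,G)\cong\mathrm{Mat}_{1\times 2}(K\{\tau\})$ (resp.\ $\Der(G,X)$) by the inner biderivations $[\delta^{(c\tau^k)},0]$ and $[c\tau^k\delta_0,\delta^{(c\tau^k)}]$ produces the block-triangular \tm module structure on the middle term. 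Your treatment is in fact more detailed than the paper's, including the correct observation that in case $(i^D)$ the cross term $\delta_0\Delta_1$ may yield a nonzero (but still nilpotent) constant part $N$.
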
 
As an immediate consequence of the Theorem \ref{thm:short_sequence_t_modules} we obtain the following theorem:
\begin{thm}\label{thm:last}
	Let $0\lra F\lra X\lra E\lra 0$ be an exact sequence of \tm modules, where $F$ and $E$ are Drinfeld modules, and let $G$ be a  Drinfeld module. 
	\begin{itemize}
		\item[$(i)$] If $\rk G<\rk F$ and $\rk G<\rk E$, then there is a short exact sequence of \tm modules
		$$0\lra \Ext_{0,\tau}(X,G)\lra \Ext^1_\tau(X,G) \lra \G_a^2 \lra 0.$$
		\item[$(i^D)$] If $\rk G>\rk F$ and $\rk G>\rk E$, then there is a short exact sequence of \tm modules
		$$0\lra \Ext_{0,\tau}(G,X)\lra \Ext^1_\tau(G,X) \lra G_a^2 \lra 0.$$
	\end{itemize}
\end{thm}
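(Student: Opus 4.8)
The plan is to deduce the statement from Proposition~\ref{prop:sequence_exts} together with Theorem~\ref{thm:short_sequence_t_modules}, producing the surjection onto $\G_a^{2}$ in the same manner Theorem~\ref{thm:short_sequence_t_modules} produced its surjection onto $\G_a$: as the map ``take the constant term of the reduced biderivation''. I treat part $(i)$; part $(i^D)$ then follows by the symmetric argument, carried out on $\Der(G,X)$ with the induced $\mathbf t$-action $t*\delta=X_t\,\delta(t)$.

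For $(i)$, Proposition~\ref{prop:sequence_exts}$(i)$ already supplies, under $\rk G<\rk F$ and $\rk G<\rk E$, a short exact sequence of $\mathbf t$-modules $0\to\Ext^1_{\tau}(E,G)\to\Ext^1_{\tau}(X,G)\to\Ext^1_{\tau}(F,G)\to0$; in particular $\Ext^1_{\tau}(X,G)$ is a $\mathbf t$-module, which we realise as $\Der(X,G)/\Der_{in}(X,G)$ with $t*\delta=G_t\,\delta(t)$. Since $\dim X=2$, write $\delta(t)=[\delta_1(t),\delta_2(t)]\in\mathrm{Mat}_{1\times2}(K\{\tau\})$ according to the block decomposition of $X_t$ given by Theorem~\ref{thm:pull-back_push-out}, in which twisted polynomials $\phi_t,\psi_t$ presenting $E$ and $F$ occupy the diagonal and the defining biderivation $\epsilon$ of the sequence occupies the lower-left corner. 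Then consider $d\colon\Der(X,G)\to K^{2}$, $\delta\mapsto d\delta(t)=\bigl(d\delta_1(t),d\delta_2(t)\bigr)$, and check the four points: $d$ descends to $\Ext^1_{\tau}(X,G)$; it is $\F_q[t]$-linear with target the trivial $\mathbf t$-module $\G_a^{2}$, because $d(G_t\,\delta(t))=\theta\cdot d\delta(t)$; it is surjective, because the constant biderivations $[c_1,c_2]$ realise every value; and its kernel is the image of $\Der_0(X,G)$, that is $\Ext_{0,\tau}(X,G)$, which is a sub-$\mathbf t$-module since $\Der_0(X,G)$ is $t$-stable. Together these give the asserted sequence $0\to\Ext_{0,\tau}(X,G)\to\Ext^1_{\tau}(X,G)\to\G_a^{2}\to0$.

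The delicate point, and the one I expect to be the main obstacle, is the first and last of these: that $d$ kills $\Der_{in}(X,G)$, so that it descends and so that its kernel is no larger than $\Ext_{0,\tau}(X,G)$. Expanding $\delta^{(U)}(t)=UX_t-G_t\,U$ for $U=[u_1,u_2]$ and taking constant terms, the contributions $d(u_1\phi_t)-d(G_t u_1)=\theta\,du_1-\theta\,du_1$ cancel — exactly as in the computation yielding~\eqref{iso1}, because $\phi_t$, $\psi_t$, and $G_t$ each have constant term $\theta$ — leaving constant term $\bigl(du_2\cdot d\epsilon_t,\,0\bigr)$. Hence $d$ descends, and the image of $\Der_{in}(X,G)$ in $K^{2}$ vanishes, precisely when the biderivation $\epsilon$ defining $X$ has vanishing constant term, i.e.\ when the nilpotent matrix of $X$ is zero; in that case the argument above gives $\G_a^{2}$ verbatim, whereas in general it yields only a surjection onto $\G_a^{s}$ with $s$ the number of zero rows of that nilpotent matrix, so the clean statement should be read with this understanding (or with $\G_a^{s}$ in place of $\G_a^{2}$). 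Granting the constant-term computation, the remaining verifications are routine and run entirely parallel to the proof of Theorem~\ref{thm:short_sequence_t_modules}.
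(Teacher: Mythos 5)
Your argument is essentially the one the paper intends --- the paper offers no proof beyond declaring the theorem an immediate consequence of Theorem \ref{thm:short_sequence_t_modules} (after Proposition \ref{prop:sequence_exts}) --- and your constant-term computation is correct; the caveat you flag is not a weakness of your write-up but a genuine correction to the statement as printed. Writing $X_t=\left[\begin{smallmatrix} E_t&0\\ \epsilon_t & F_t\end{smallmatrix}\right]$, every inner biderivation in $\Der(E,F)$ has zero constant term, so $d\epsilon_t$ is an invariant of the extension class and cannot be normalized away; and, as you compute, $d\delta^{(U)}(t)=[\,du_2\cdot d\epsilon_t,\ 0\,]$ for $U=[u_1,u_2]$, so the constant-term map descends to $\Ext^1_\tau(X,G)$ only modulo the line $K\cdot[d\epsilon_t,0]$. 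Consequently the cokernel of $\Ext_{0,\tau}(X,G)\hookrightarrow\Ext^1_\tau(X,G)$ is $K^2/d\big(\Derin(X,G)\big)\cong\G_a^{s}$ with $s=2$ if $d\epsilon_t=0$ and $s=1$ otherwise --- exactly the count (zero rows of the nilpotent matrix) appearing in Proposition \ref{propI}$(ii)$ and Theorem \ref{thm:Ext_dla_t_modul_i_Drinfeld}$(ii)$. The paper's own example preceding Proposition \ref{prop:sequence_exts} witnesses the failure of the literal statement: there $E_t=\theta+\tau^2$, $F_t=\theta+\tau^3$, $\epsilon_t=1+\tau$, $G=C$, the reduction gives $\Ext^1_\tau(X,C)\cong\{[c_0+c_1\tau,\ d_0+d_1\tau+d_2\tau^2]\}$ (five coordinates), while $\Der_0(X,C)$ modulo $\Der_0(X,C)\cap\Derin(X,C)$ reduces to $[c_1\tau,\ d_1\tau+d_2\tau^2+d_3\tau^3]$ (four coordinates), whose image in $\Ext^1_\tau(X,C)$ is precisely the hyperplane $d_0=0$; the quotient is $\G_a$, not $\G_a^2$.

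So your proof is correct provided the theorem is read with the extra hypothesis $d\epsilon_t=0$ (equivalently $N_X=0$, i.e.\ the class of $X$ lies in $\Ext_{0,\tau}(E,F)$), or with $\G_a^{s}$ in place of $\G_a^{2}$; under that reading the verifications you defer (the quotient carries the trivial $t$-action since $d(G_t\delta(t))=\theta\,d\delta(t)$, and in the reduced coordinates the surjection is a $0$--$1$ matrix, hence a morphism of ${\mathbf t}$-modules) do run parallel to Theorem \ref{thm:short_sequence_t_modules} and Proposition \ref{propI}$(ii)$. For $(i^D)$ the same computation on $\mathrm{Mat}_{2\times 1}(K\{\tau\})$ gives $d\delta^{(U)}(t)=[\,0,\ -d\epsilon_t\cdot du_1\,]^T$, so the identical dichotomy occurs; note also that for $\rk G>\rk E,\,\rk F$ the structure available from the paper is a ${\mathbf t}^{\sigma}$-module one (for $K$ perfect), so your ``symmetric argument'' should be routed through Theorem \ref{Duality} as in Section \ref{general} rather than carried out verbatim on the $\tau$-side.
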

\begin{rem}
	In the case where $K$ is perfect, one  can prove the corresponding ''\tsm versions'' of the Proposition \ref{prop:sequence_exts} and Theorem \ref{thm:last}. 
\end{rem}

\section{Extensions of  dual \tm motives}\label{dmot}
Now,  recall the notion of  a dual  \tm motive (cf. \cite{bp20}). 
\begin{dfn}
	Let $K$ be a perfect field and let  $K[t,\sigma]$ be the polynomial ring satisfying the following relations:
	\begin{equation}\label{skew}
		tc=ct, \quad t\sigma=\sigma t, \quad \sigma c=c^{(-1)}\sigma, \quad c\in K.
	\end{equation}
	A dual \tm motive is a left $K[t,\sigma]$-module that is free and finitely generated over $K\{\sigma\}$ and for which  there exists an
	$l\in {\mathbb N}$ such that $(t-\theta)^l(H/\sigma H)=0.$ A morphism of dual \tm motives is a morphism of $K[t,\sigma]$-modules. 
\end{dfn}

For a \tm module $\Phi \rightarrow {\mathrm{Mat}}_d(K\{\tau\})$ let $H(\Phi)={\mathrm{Mat}}_{1\times d}(K\{\sigma\})$ 
i.e. a free $K\{\tau\}$-module on $d$ generators. Equip $H(\Phi)$ with the following $\F_q[t]$-action:
\begin{equation}\label{action}
	a\cdot h=h{\Phi}_a^{\sigma}, \quad\textnormal{for}\quad  h\in H(\Phi), \quad a\in {\mathbb F}_q[t].
\end{equation}

Every morphism of \tm modules $f:\Phi\lra \Psi$ induces a morphism of  dual \tm motives
$H(f):H(\Phi)\lra H(\Psi)$ defined by the following formula:
\begin{equation}
	H(f)(h)=h\cdot f^{\sigma} \quad\textnormal{for}\quad  h\in H(\Phi).
\end{equation}
Vice versa every morphism of  dual \tm motives  $g: H(\Phi)\rightarrow H(\Psi)$ comes from a morphism of  \tm modules.

From $H(\Phi)$ one can recover $\Phi$ as:
\begin{equation}\label{hphi}
	\frac{H(\Phi)}{(\sigma -1)H(\Phi)}\cong (\Phi , K^d)
\end{equation}
The following theorem was proved by G. Anderson.
\begin{thm}[Anderson]\label{And} The correspondence between dual \tm motives and \tm modules
	over a perfect field $K$ gives an equivalence of categories.
\end{thm}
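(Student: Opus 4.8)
The plan is to show that the functor $H$ of \eqref{action} is an equivalence, either by checking directly that it is fully faithful and essentially surjective, or by exhibiting an explicit quasi-inverse $G$ together with natural isomorphisms $G\circ H\cong\mathrm{id}$ and $H\circ G\cong\mathrm{id}$. Two of the three ingredients are essentially recorded already in the preceding discussion (every $f\colon\Phi\to\Psi$ gives $H(f)$, and every morphism of dual $\mathbf t$-motives arises as some $H(f)$), so the substance is to check that $H$ lands in dual $\mathbf t$-motives, that it is faithful, and that it is essentially surjective.

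First I would verify that $H(\Phi)$ really is a dual $\mathbf t$-motive. It is free and finitely generated over $K\{\sigma\}$ by construction, and the left $K[t,\sigma]$-module structure --- $K$ and $\sigma$ acting by left multiplication, $t$ acting by right multiplication by $\Phi_t^{\sigma}$ --- satisfies the relations \eqref{skew}: $t\sigma=\sigma t$ since left and right multiplication commute, $\sigma c=c^{(-1)}\sigma$ is the defining twist in $K\{\sigma\}$, and $tc=ct$ is immediate. The one nontrivial condition is $(t-\theta)^{l}\bigl(H(\Phi)/\sigma H(\Phi)\bigr)=0$ for $l\gg0$: the quotient is a $d$-dimensional $K$-vector space on which $t$ acts through the $\sigma^{0}$-coefficient of $\Phi_t^{\sigma}$, and since the passage $(-)^{\sigma}$ fixes the $\tau^{0}$-coefficients of twisted polynomials, this coefficient is (the transpose of) the constant term $\theta I_d+N$ of $\Phi_t$; as $N$ is nilpotent, $(t-\theta)^{l}$ acts as $(N^{T})^{l}=0$ for $l$ large. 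Functoriality of $H$ is where the transpose built into the adjoint \eqref{ad} is needed, so as to reconcile the order-reversing behaviour of $(-)^{\sigma}$ on twisted polynomials with an honest composition law; faithfulness is clear, since $H(f)=0$ forces $e_{i}f^{\sigma}=0$ for every standard row vector $e_{i}$, hence $f^{\sigma}=0$ and $f=0$.

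For the quasi-inverse, given a dual $\mathbf t$-motive $M$ I would choose a $K\{\sigma\}$-basis $m_{1},\dots,m_{d}$; because $t$ commutes with $\sigma$ and with $K$, the action of $t$ on $M$ is left $K\{\sigma\}$-linear and hence is given by a matrix $\Theta\in\mathrm{Mat}_{d}(K\{\sigma\})$, and I set $G(M)$ to be the $\mathbf t$-module with $\Phi_t:=\Theta^{\tau}$ in the sense of the inverse adjoint \eqref{ad1}. That $G(M)$ is a legitimate $\mathbf t$-module is exactly the content of the defining inequality $(t-\theta)^{l}(M/\sigma M)=0$: it says the $\sigma^{0}$-part of $\Theta$ is $\theta I_d$ plus a nilpotent matrix, and since $(-)^{\tau}$ and transposition preserve $\sigma^{0}\leftrightarrow\tau^{0}$ constant terms, the constant term of $\Phi_t$ has the shape $\theta I_d+N$ with $N$ nilpotent, as Definition \ref{tmod} demands. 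Using that $(-)^{\sigma}$ and $(-)^{\tau}$ are mutually inverse, one gets natural isomorphisms $H(G(M))\cong M$ (the standard basis exhibits $H(G(M))$ with $t$ acting by $\Theta$) and $G(H(\Phi))\cong\Phi$ (reading off the matrix of the $t$-action on $H(\Phi)$ in the standard basis returns $\Phi_t^{\sigma}$, whose inverse adjoint is $\Phi_t$, compatibly with the recovery formula \eqref{hphi}); independence of the chosen basis is handled by noting that a change of $K\{\sigma\}$-basis is a matrix $P\in\mathrm{GL}_{d}(K\{\sigma\})$ conjugating $\Theta$, hence yielding an isomorphic $\mathbf t$-module via $P^{\tau}$. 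Fullness is obtained in the same spirit: a morphism $g\colon H(\Phi)\to H(\Psi)$ of dual $\mathbf t$-motives is in particular $K\{\sigma\}$-linear between free modules, hence right multiplication by a matrix $A$, and $t$-linearity forces the relation $\Phi_t^{\sigma}A=A\Psi_t^{\sigma}$, whose inverse adjoint is precisely a morphism $f\in\Hom_{\tau}$ with $H(f)=g$.

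I expect the main obstacle to be purely bookkeeping: keeping straight the transposes and the negative Frobenius twists carried by $(-)^{\sigma}$, $(-)^{\tau}$ and the adjoint homomorphism, so that the $t$-action, the nilpotence condition, and the composition law for morphisms all match on the two sides without order or sign errors. A preliminary point worth isolating is that, $K$ being perfect, $K\{\sigma\}$ is a two-sided principal ideal domain, which is what makes ``free and finitely generated over $K\{\sigma\}$'' the right finiteness hypothesis and guarantees that the relevant submodules (such as $\sigma H(\Phi)$) and quotients behave as expected. (Since the statement is classical and due to Anderson, one may instead simply invoke \cite{bp20}; the foregoing is the self-contained route.)
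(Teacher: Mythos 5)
The paper does not prove Theorem \ref{And} at all: it is quoted as Anderson's theorem (with \cite{a}, \cite{bp20} as the ambient references), and the surrounding text only records the data needed later, namely $H$ on objects and morphisms as in \eqref{action} and the recovery formula \eqref{hphi}. So your argument cannot be matched against a proof in the paper; what you have written is the standard proof of the Anderson correspondence (as in \cite{bp20}), and at the level of detail given it is correct. The three substantive verifications are all in place and accurate: $H(\Phi)$ satisfies the nilpotency condition because reduction modulo $\sigma H(\Phi)$ only sees the $\sigma^0$-coefficient of $\Phi_t^{\sigma}$, which is the transpose of $\theta I_d+N$; fullness follows because a left $K\{\sigma\}$-linear map of free modules is right multiplication by a matrix $A$ and $t$-equivariance gives $\Phi_t^{\sigma}A=A\Psi_t^{\sigma}$, whose inverse adjoint $A^{\tau,T}$ intertwines $\Phi_t$ and $\Psi_t$ and satisfies $H(A^{\tau,T})=g$; and essential surjectivity follows by choosing a $K\{\sigma\}$-basis of $M$, reading off the matrix $\Theta$ of the $t$-action, and checking that $(t-\theta)^l(M/\sigma M)=0$ is exactly the statement that the $\sigma^0$-part of $\Theta$ is $\theta I_d$ plus a nilpotent matrix, so $\Theta^{\tau}$ (with the transpose) defines a legitimate $\mathbf t$-module; base-change by $P\in\mathrm{GL}_d(K\{\sigma\})$ only changes it by an isomorphism. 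The one point you rightly flag as bookkeeping deserves the warning: the paper's own conventions are not internally consistent about the direction of a morphism (a matrix $f\in\mathrm{Mat}_{d\times e}(K\{\tau\})$ with $f\Psi_t=\Phi_t f$ is once called a morphism $\Phi\to\Psi$ and once a morphism $(G_a^e,\Psi)\to(G_a^d,\Phi)$), so to make $H$ covariant with the formula $h\mapsto h f^{\sigma}$ one must fix the geometric convention; with that choice your fullness computation is exactly compatible with \eqref{ad} and \eqref{ad1}. Your closing remark that one may simply cite the literature is in fact what the paper does.
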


\begin{dfn}
We call a sequence of dual \tm motives:
\begin{equation}\label{exct}
0\lra H(\Psi)\lra H(\Xi)\lra H(\Phi)\lra 0,
\end{equation}
exact if it is exact as a sequence of  $\F_q[t]-$modules. 
The space of  all exact sequences  (\ref{exct}) for fixed $\Psi$ and $\Phi$ will be denoted as  $\Ext^1_{{\cal M}_t^{\vee}}(H(\Phi), H(\Psi))$. 
\end{dfn}

Similarly as in the case of  \tm modules the space $\Ext^1_{{\cal M}_t^{\vee}}$ can be endowed with the structure of an $\F_q[t]-$module, where the multiplication by an  element $a\in\F_q[t]$ is given by the pushout of the map $H(\Psi_a):H(\Psi)\lra H(\Psi)$, (cf. Remark  \ref{uw:nowa}.) 
 
In general an equivalence of categories need not preserve exact sequences, so an isomorphism of corresponding
spaces of extensions is not an obvious fact.
However, we have the following:
\begin{thm}
	Let $\Phi$ and $\Psi$ be \tm modules. Then there exists an isomorphism of ${\mathbb F}_q[t]$-modules:
	\begin{equation}\label{diso}
		\Ext^1_{\tau}(\Phi,\Psi)\cong \Ext^1_{{\cal M}_t^{\vee}}(H(\Phi), H(\Psi)).
	\end{equation}
\end{thm}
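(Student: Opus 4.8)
The statement will follow by combining the Duality Theorem \ref{Duality}, which provides an $\F_q[t]$-module isomorphism $\Ext^1_{\tau}(\Phi,\Psi)\cong\Ext^1_{\sigma}(\Psi^{\sigma},\Phi^{\sigma})$, with a direct identification $\Ext^1_{{\cal M}_t^{\vee}}(H(\Phi),H(\Psi))\cong\Ext^1_{\sigma}(\Psi^{\sigma},\Phi^{\sigma})$. The point is that the biderivation calculus used for \tm modules transcribes essentially verbatim to the category of dual \tm motives, once one observes that $H(\Phi)$ is the free left $K\{\sigma\}$-module $\mathrm{Mat}_{1\times\dim\Phi}(K\{\sigma\})$ on which $t$ acts by right multiplication by $\Phi_t^{\sigma}=\bigl[(\Phi_t)^{\sigma}\bigr]^{T}$, i.e.\ by the image of $t$ under the adjoint \tsm module $\Phi^{\sigma}$ (cf.\ \eqref{ad}--\eqref{action}).

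First I would classify the extensions. Let $0\to H(\Psi)\to\mathcal H\to H(\Phi)\to 0$ be a short exact sequence of dual \tm motives, exact as $\F_q[t]$-modules. All three terms are free over $K\{\sigma\}$ and all maps are $K\{\sigma\}$-linear, and a complex of modules exact as abelian groups is exact as $K\{\sigma\}$-modules; since $H(\Phi)$ is free, hence projective, the sequence splits over $K\{\sigma\}$, giving $\mathcal H\cong H(\Phi)\oplus H(\Psi)$ as left $K\{\sigma\}$-modules. The operator $t$ commutes with left multiplication by $K$ and by $\sigma$ (relations \eqref{skew}), hence is $K\{\sigma\}$-linear, so in an adapted $K\{\sigma\}$-basis it is right multiplication by a block matrix $\bigl[\begin{smallmatrix}\Phi_t^{\sigma}&g\\0&\Psi_t^{\sigma}\end{smallmatrix}\bigr]$ with $g\in\mathrm{Mat}(K\{\sigma\})$ \emph{arbitrary} — there is no constraint, because $\F_q[t]$ is free on the single generator $t$ — and for every $g$ the resulting $K[t,\sigma]$-module is a dual \tm motive, since $(t-\theta)$ acts on $\mathcal H/\sigma\mathcal H$ by a block-triangular matrix with nilpotent diagonal blocks, hence nilpotently. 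A change of the $K\{\sigma\}$-splitting replaces $g$ by $g+(\Phi_t^{\sigma}u-u\Psi_t^{\sigma})$ for some $u\in\mathrm{Mat}(K\{\sigma\})$, i.e.\ by an inner biderivation $\Psi^{\sigma}\to\Phi^{\sigma}$, and the Baer sum of two such extensions corresponds to the sum of the matrices $g$. This yields an isomorphism of $\F_q$-vector spaces $\Ext^1_{{\cal M}_t^{\vee}}(H(\Phi),H(\Psi))\cong\Der(\Psi^{\sigma},\Phi^{\sigma})/\Derin(\Psi^{\sigma},\Phi^{\sigma})$.

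Next I would match the $\F_q[t]$-module structures. On $\Ext^1_{{\cal M}_t^{\vee}}$ multiplication by $a$ is the pushout along $H(\Psi_a)\colon H(\Psi)\to H(\Psi)$, which by \eqref{action} is right multiplication by $\Psi_a^{\sigma}$; the proof of Theorem \ref{thm:pull-back_push-out}(ii) uses only the block-matrix formalism and so carries over word for word to dual \tm motives, showing that this pushout sends the class of $g$ to the class of $g\,\Psi_a^{\sigma}$. That is precisely the action recorded in \eqref{exta1} on $\Ext^1_{\sigma}(\Psi^{\sigma},\Phi^{\sigma})=\Der(\Psi^{\sigma},\Phi^{\sigma})/\Derin(\Psi^{\sigma},\Phi^{\sigma})$, so the isomorphism of the previous paragraph is one of $\F_q[t]$-modules; composing it with the isomorphism $\Ext^1_{\tau}(\Phi,\Psi)\cong\Ext^1_{\sigma}(\Psi^{\sigma},\Phi^{\sigma})$ of Theorem \ref{Duality} proves the theorem. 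Equivalently, the resulting isomorphism is $[\Xi]\mapsto[H(\Xi)]$: from the standard form $\Xi_t=\bigl[\begin{smallmatrix}\Phi_t&0\\\delta&\Psi_t\end{smallmatrix}\bigr]$ (see the discussion of \cite{pr} in Section \ref{prelim}) one checks that $H$ takes a short exact sequence of \tm modules to a short exact sequence of dual \tm motives, and since $H$ is an equivalence preserving pushouts, the computation above is just the verification that it induces a bijection, additive and $\F_q[t]$-linear, on extension classes.

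The hard part will be the linear-algebra bookkeeping in the second paragraph: keeping the left action of $K\{\sigma\}$ on row vectors separate from the right action of $t$, and correctly carrying the transpose built into the adjoint \eqref{ad}, so that the biderivation attached to $\mathcal H$ is genuinely a biderivation $\Psi^{\sigma}\to\Phi^{\sigma}$ and the ambiguity coming from re-splitting is exactly $\Derin(\Psi^{\sigma},\Phi^{\sigma})$ rather than some variant. A secondary point that should be spelled out is that the proof of Theorem \ref{thm:pull-back_push-out} really does transcribe to the present setting, i.e.\ that pullbacks and pushouts of dual \tm motives are computed by the same block matrices; this is immediate from the description of $H$ on morphisms but deserves an explicit remark.
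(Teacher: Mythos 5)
Your proposal is correct, but it follows a genuinely different route from the paper's. The paper never introduces the $\sigma$-side biderivation calculus at this point: it argues functorially, showing first that $H(-)$ sends a short exact sequence of ${\mathbf t}$-modules to a sequence of dual motives that is exact as $\F_q[t]$-modules (by decomposing $H(\Phi)$ into the pieces $(K^d)_i$ with the action \eqref{ac1}), then that exactness is preserved in the other direction by $M\mapsto M/(\sigma-1)M$ (which recovers the Mordell--Weil modules via \eqref{hphi}), and finally invoking Anderson's equivalence (Theorem \ref{And}) together with Theorem \ref{thm:pull-back_push-out} and Remark \ref{uw:nowa} to conclude that $[\Xi]\mapsto[H(\Xi)]$ is an $\F_q[t]$-linear bijection on extension classes. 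You instead compute $\Ext^1_{{\cal M}_t^{\vee}}(H(\Phi),H(\Psi))$ from scratch: split the sequence over $K\{\sigma\}$ using projectivity of $H(\Phi)$, write the $t$-action as right multiplication by $\bigl[\begin{smallmatrix}\Phi_t^{\sigma}&g\\0&\Psi_t^{\sigma}\end{smallmatrix}\bigr]$, check the nilpotency condition on $\mathcal H/\sigma\mathcal H$, identify the ambiguity in $g$ with $\Derin(\Psi^{\sigma},\Phi^{\sigma})$ and the pushout action with right multiplication by $\Psi_a^{\sigma}$ as in \eqref{exta1}, and then quote Theorem \ref{Duality}. This buys an explicit cocycle description of dual-motive extensions (a $\sigma$-side analogue of \eqref{iso_ext}) and avoids the paper's graded decomposition, at the price of leaning on Theorem \ref{Duality} (hence on $K$ perfect, which is anyway assumed for dual motives) and of having to reconcile your classification with the paper's definition of $\Ext^1_{{\cal M}_t^{\vee}}$, whose middle terms are literally of the form $H(\Xi)$; that point is covered because, as you note, $\bigl[\begin{smallmatrix}\Phi_t^{\sigma}&g\\0&\Psi_t^{\sigma}\end{smallmatrix}\bigr]=\Xi_t^{\sigma}$ for $\Xi_t=\bigl[\begin{smallmatrix}\Phi_t&0\\\delta&\Psi_t\end{smallmatrix}\bigr]$ with $g=(\delta(t)^{\sigma})^{T}$, so your composite isomorphism is the same map $[\Xi]\mapsto[H(\Xi)]$ as the paper's. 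Two small details you should still write out: that an arbitrary equivalence of extensions (not merely a re-splitting of one fixed middle term) changes $g$ only by an inner biderivation, and that the Baer sum of two block-matrix extensions has cocycle $g_1+g_2$; both are the same routine computations as in the ${\mathbf t}$-module case.
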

\begin{proof}
	Let $\Phi$  be a \tm module and let $h=[w_1(\sigma),\dots, w_d(\sigma)]\in H(\Phi)$, where 
	$w_i(\sigma)=\sum_{j=0}^{n_i}a_{i,j}\sigma^i$. Then 
	\begin{align*}
		h&= \big[w_1(\sigma),\dots, w_d(\sigma)\big]=\sum_{j=0}^{\max\{n_i\}}\big[a_{1,j},\dots, a_{d,j}\big]\sigma^j\in \bigoplus_{j=0}^\infty K^d\sigma^j.
	\end{align*}
 Thus every dual  \tm motive $H(\Phi)$, as an $\F_q[t]-$module,  can be viewed as an element of the space  ${\bigoplus}_{i=0}^{\infty}(K^d)_i$, where the action of $a\in\F_q[t]$ on the  $i-$th component is given by the following formula:
	\begin{equation}\label{ac1}
		a\cdot k=k\sigma^{i}\Phi_a^{\sigma},\quad \textnormal{for} \quad k\in (K^d)_i\quad \textnormal{and} \quad a\in A
	\end{equation}
Notice that every component  $(K^d)_i$ with the action  (\ref{ac1}) is an $F_q[t]-$module.
 
So we see that starting with the exact sequence of \tm modules
	$$0\rightarrow (\Psi, K^e)\rightarrow (\Xi, K^{d+e})\rightarrow (\Phi, K^d)\rightarrow 0$$
	one obtains exact sequences of $\F_q[t]-$modules
	$$0\lra (K^d)_i\lra (K^{d+e})_i\lra (K^{e})_i\lra 0\quad \textnormal{for all}\quad i=0,1,2,\dots$$
	Exactness follows from the formula (\ref{ac1}).
	Thus we get an exact sequence of  $\F_q[t]-$modules
	\begin{equation}
		0\rightarrow {\bigoplus}_{i=0}^{\infty}(K^d)_i\rightarrow {\bigoplus}_{i=0}^{\infty}(K^{d+e})_i\rightarrow  {\bigoplus}_{i=0}^{\infty}(K^{e})_i\rightarrow 0
	\end{equation}
which in turn  yields an  exact sequence:
$$0\lra H(\Psi)\lra H(\Xi)\lra H(\Phi)\lra 0.$$
So, $H(-)$ preserves exact sequences.
	
	Now assume that we have an exact sequence of dual \tm motives
	$$0\rightarrow H(F)\rightarrow H(\Xi)\rightarrow H(E)\rightarrow 0.$$
	One easily verifies that the induced sequence:
	$$0\rightarrow \frac{H(F)}{(\sigma-1)H(F)}\rightarrow \frac{H(\Xi)}{(\sigma-1)H(\Xi)}\rightarrow \frac{H(E)}{(\sigma-1)H(E)}\rightarrow 0$$
	is an exact sequence of $\F_q[t]$-modules.
	
From the definition of   $\F_q[t]-$module structure  on $\Ext^1_{{\cal M}_t^{\vee}},$
taking into account Theorem  \ref{thm:pull-back_push-out} and Remark \ref{uw:nowa}, it easily follows that  $H(-)$ induces an  $\F_q[t]-$module isomorphism: 	$$\Ext^1_{\tau}(\Phi,\Psi)\cong \Ext_{{\cal M}_t^{\vee}}^1(H(\Phi), H(\Psi)).$$ 
\end{proof}

We also have the following theorem for dual $t$-motives:

\begin{thm}\label{thm:long_sequencetmot}
Let $0\rightarrow M_1 \rightarrow M\rightarrow M_2\rightarrow 0$ be an exact sequence of dual \tm motives and let 
$N$ be a dual \tm motive.
 \begin{itemize}
	\item[$(i)$] There is an exact sequence of $\F_q[t]-$modules:
	\begin{align*}
	0\lra \Hom_{{\cal M}_t^{\vee}}(N,M_1)\lra  \Hom_{{\cal M}_t^{\vee}}(N,M)\lra  \Hom_{{\cal M}_t^{\vee}}(N,M_2)\lra  \\
		\lra \Ext^1_{{\cal M}_t^{\vee}}(N,M_1)\lra  \Ext^1_{{\cal M}_t^{\vee}}(N,M)\lra  \Ext^1_{{\cal M}_t^{\vee}}(N,M_2)\rightarrow 0. 	
	\end{align*}
	\item[$(ii)$]  There is an exact sequence of $\F_q[t]-$modules:
	\begin{align*}
		0\lra \Hom_{{\cal M}_t^{\vee}}(M_2,N)\lra  \Hom_{{\cal M}_t^{\vee}}(M,N)\lra  \Hom_{{\cal M}_t^{\vee}}(M_1,N)\lra  \\
		\lra \Ext^1_{{\cal M}_t^{\vee}}(M_2,N)\lra  \Ext^1_{{\cal M}_t^{\vee}}(M,N)\lra  \Ext^1_{{\cal M}_t^{\vee}}(M_1,N)\rightarrow 0. 
	\end{align*}
\end{itemize} 
\end{thm}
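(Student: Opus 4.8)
The plan is to transport the six-term exact sequences of Theorem \ref{thm:long_sequence} through Anderson's equivalence $H(-)$ (Theorem \ref{And}), using the $\Ext^1$-isomorphism established in the preceding theorem together with the fact that $H(-)$ is fully faithful (every morphism of dual \tm motives comes from a unique morphism of \tm modules) and that it both preserves and reflects exact sequences.

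First I would reduce to the \tm module setting. Since $H(-)$ is essentially surjective, write $N=H(\Xi)$, $M=H(\Psi)$, $M_1=H(\Psi_1)$, $M_2=H(\Psi_2)$ for \tm modules $\Xi,\Psi,\Psi_1,\Psi_2$; by fullness the two maps in the given sequence are $H(f)$ and $H(g)$ for morphisms of \tm modules $f\colon\Psi_1\to\Psi$, $g\colon\Psi\to\Psi_2$. Applying $(-)/(\sigma-1)(-)$ and invoking \eqref{hphi}, the exactness of $0\to M_1\to M\to M_2\to 0$ as a sequence of $\F_q[t]$-modules forces $0\to\Psi_1\to\Psi\to\Psi_2\to 0$ to be a short exact sequence of \tm modules --- this is exactly the argument in the proof of the preceding theorem --- and, up to isomorphism of short exact sequences, we may assume it is presented by a biderivation $\delta$ (cf.\ the remark following Theorem \ref{thm:long_sequence}). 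Then the original sequence of dual \tm motives is $H$ applied to this one.

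Next I would apply Theorem \ref{thm:long_sequence} to $0\to\Psi_1\to\Psi\to\Psi_2\to 0$ and the \tm module $\Xi$, obtaining the two six-term exact sequences of $\F_q[t]$-modules in $\Hom_\tau$ and $\Ext^1_\tau$, with the final arrow surjective. Applying $H(-)$ term by term and using the natural isomorphisms $\Hom_\tau(\Xi,\Psi_j)\cong\Hom_{{\cal M}_t^{\vee}}(H(\Xi),H(\Psi_j))$ (fullness and faithfulness of $H$) and $\Ext^1_\tau(\Xi,\Psi_j)\cong\Ext^1_{{\cal M}_t^{\vee}}(H(\Xi),H(\Psi_j))$ (preceding theorem), I would check that each arrow of the $\F_q[t]$-module sequences is carried to the corresponding arrow on the dual-\tm-motive side: the maps $i\circ-$, $\pi\circ-$, $-i\circ-$, $-\pi\circ-$ go over by functoriality of $H$ (and, for the $\Ext$ maps, by compatibility of the $\Ext^1$-isomorphism with pushouts, Theorem \ref{thm:pull-back_push-out} and Remark \ref{uw:nowa}), while the connecting map $\delta\circ-$ is intertwined because on both sides it is Yoneda composition of an $H(\Xi)$-valued morphism with the fixed extension class, and $H$ respects the pullback description of that composition (Theorem \ref{thm:pull-back_push-out}$(i)$). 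This yields $(i)$; part $(ii)$ is entirely symmetric, using instead the first-variable six-term sequence of Theorem \ref{thm:long_sequence} and the pushout part of Theorem \ref{thm:pull-back_push-out}.

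The main obstacle I anticipate is precisely this last naturality check: verifying that the $\Ext^1$-isomorphism of the preceding theorem is compatible not only with the $\F_q[t]$-action but also with the connecting maps and the pushforward/pullback maps that appear in the six-term sequences --- equivalently, that $H(-)$ is an exact equivalence of the (non-abelian) categories in the sense required to transport long exact $\Ext$-sequences. Once one unwinds that both connecting maps are given by composition with the distinguished extension, and that $H$ sends pullbacks to pullbacks and pushouts to pushouts, the commutativity of all the relevant squares is formal; the surjectivity of the final map transfers for free, since it already holds on the \tm module side by Theorem \ref{thm:long_sequence}.
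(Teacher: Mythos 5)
Your proposal is correct and follows essentially the same route as the paper: the paper's proof likewise realizes the dual \tm motives and $N$ as $H$ of \tm modules via Anderson's equivalence (Theorem \ref{And}), runs the six-term sequences of Theorem \ref{thm:long_sequence} on the \tm module side, and applies the functor $H$ again. The naturality and exactness-transfer checks you spell out are precisely what the paper leaves implicit in its brief argument.
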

\begin{proof}
Pick \tm modules $\Phi_1, \Phi , \Phi_2$  and $\Psi$ such that 
\begin{equation}\label{izzo}
M_1\cong H(\Phi_1),\quad M\cong H(\Phi) \quad M_2\cong H(\Phi_2). \quad {\mathrm{and }} \quad N \cong H(\Psi).
\end{equation}
This is possible by Theorem \ref{And}.
Now follow the proof of the Theorem \ref{thm:long_sequence} for the \tm modules $\Phi_1, \Phi , \Phi_2$  and $\Psi$ and finally apply the functor $H$ again.
\end{proof}

\section*{Acknowledgement}
The authors would like to thank the anonymous referee for numerous corrections,  valuable questions,	comments and suggestions. In fact,   Sections \ref{Phipsi}, \ref{Carlitz}  and  \ref{dmot} were added as a result of these questions.

{}

\end{document}